
\documentclass[12pt]{amsart}
\usepackage{times}
\setlength{\textheight}{20cm} \textwidth16cm \hoffset=-2truecm
\begin{document}
\numberwithin{equation}{section}

\def\1#1{\overline{#1}}
\def\2#1{\widetilde{#1}}
\def\3#1{\widehat{#1}}
\def\4#1{\mathbb{#1}}
\def\5#1{\frak{#1}}
\def\6#1{{\mathcal{#1}}}

\newcommand{\de}{\partial}
\newcommand{\R}{\mathbb R}
\newcommand{\al}{\alpha}
\newcommand{\tr}{\widetilde{\rho}}
\newcommand{\tz}{\widetilde{\zeta}}
\newcommand{\tv}{\widetilde{\varphi}}
\newcommand{\tO}{\widetilde{\Omega}}
\newcommand{\hv}{\hat{\varphi}}
\newcommand{\tu}{\tilde{u}}
\newcommand{\usc}{{\sf usc}}
\newcommand{\tF}{\tilde{F}}
\newcommand{\debar}{\overline{\de}}
\newcommand{\Z}{\mathbb Z}
\newcommand{\C}{\mathbb C}
\newcommand{\Po}{\mathbb P}
\newcommand{\zbar}{\overline{z}}
\newcommand{\G}{\mathcal{G}}
\newcommand{\So}{\mathcal{U}}
\newcommand{\Ko}{\mathcal{K}}
\newcommand{\U}{\mathcal{U}}
\newcommand{\B}{\mathbb B}
\newcommand{\oB}{\overline{\mathbb B}}
\newcommand{\Cur}{\mathcal D}
\newcommand{\Dis}{\mathcal Dis}
\newcommand{\Levi}{\mathcal L}
\newcommand{\SP}{\mathcal SP}
\newcommand{\Sp}{\mathcal Q}
\newcommand{\Ma}{\mathcal M}
\newcommand{\Co}{\mathcal C}
\newcommand{\Hol}{{\sf Hol}(\mathbb H, \mathbb C)}
\newcommand{\Aut}{{\sf Aut}(\mathbb D)}
\newcommand{\D}{\mathbb D}
\newcommand{\oD}{\overline{\mathbb D}}
\newcommand{\oX}{\overline{X}}
\newcommand{\loc}{L^1_{\rm{loc}}}
\newcommand{\loci}{L^\infty_{\rm{loc}}}
\newcommand{\la}{\langle}
\newcommand{\ra}{\rangle}
\newcommand{\thh}{\tilde{h}}
\newcommand{\N}{\mathbb N}
\newcommand{\kd}{\kappa_D}
\newcommand{\Hr}{\mathbb H}
\newcommand{\ps}{{\sf Psh}}
\newcommand{\tg}{\widetilde{\gamma}}

\newcommand{\subh}{{\sf subh}}
\newcommand{\harm}{{\sf harm}}
\newcommand{\ph}{{\sf Ph}}
\newcommand{\tl}{\tilde{\lambda}}
\newcommand{\ts}{\tilde{\sigma}}

\def\v{\varphi}
\def\Re{{\sf Re}\,}
\def\Im{{\sf Im}\,}

\def\dist{{\rm dist}}
\def\const{{\rm const}}
\def\rk{{\rm rank\,}}
\def\id{{\sf id}}
\def\aut{{\sf aut}}
\def\Aut{{\sf Aut}}
\def\CR{{\rm CR}}
\def\GL{{\sf GL}}
\def\U{{\sf U}}

\def\la{\langle}
\def\ra{\rangle}

\newtheorem{theorem}{Theorem}[section]
\newtheorem{lemma}[theorem]{Lemma}
\newtheorem{proposition}[theorem]{Proposition}
\newtheorem{corollary}[theorem]{Corollary}

\theoremstyle{definition}
\newtheorem{definition}[theorem]{Definition}
\newtheorem{example}[theorem]{Example}

\theoremstyle{remark}
\newtheorem{remark}[theorem]{Remark}
\numberwithin{equation}{section}

\title[Pluricomplex Poisson kernel in strongly pseudoconvex domains]{The pluricomplex  Poisson kernel  for strongly pseudoconvex domains}
\author[F. Bracci]{Filippo Bracci}
\author[A. Saracco]{Alberto Saracco}
\author[S. Trapani]{Stefano Trapani}
\address{F. Bracci: Dipartimento Di Matematica\\
Universit\`{a} di Roma \textquotedblleft Tor Vergata\textquotedblright\ \\
Via Della Ricerca Scientifica 1, 00133 \\
Roma, Italy} \email{fbracci@mat.uniroma2.it}
\address{A. Saracco: Dipartimento di Scienze Matematiche, Fisiche e Informatiche\\ Universit\`a di Parma\\ Parco Area delle Scienze 53/A, 43124\\ Parma, Italy} \email{alberto.saracco@unipr.it}
\address{S. Trapani: Dipartimento Di Matematica\\
Universit\`{a} di Roma \textquotedblleft Tor Vergata\textquotedblright\ \\
Via Della Ricerca Scientifica 1, 00133 \\
Roma, Italy} \email{trapani@mat.uniroma2.it}

\subjclass[2010]{32U15, 32T15, 32H50}
\keywords{pluripotential theory; pluricomplex Poisson kernel; holomorphic dynamics; strongly pseudoconvex domains}

\thanks{Partially supported by PRIN {\sl Real and Complex
Manifolds: Topology, Geometry and holomorphic dynamics} n.2017JZ2SW5, by GNSAGA of INdAM and  by the MIUR Excellence Department Project awarded to the
Department of Mathematics, University of Rome Tor Vergata, CUP E83C18000100006}

\begin{abstract}
In this paper we introduce, via a Phragm\'en-Lindel\"of type theorem, a maximal plurisubharmonic function in a strongly pseudoconvex domain. We call such a function  the {\sl pluricomplex Poisson kernel} because it shares many properties with the classical Poisson kernel of the unit disc. In particular, we show that such a function is continuous, it is zero on the boundary except at one boundary point where it has a non-tangential simple pole, and reproduces pluriharmonic functions. We also use such a function to obtain a new ``intrinsic'' version of the classical Julia's Lemma and Julia-Wolff-Carath\'eodory's Theorem.
\end{abstract}

\maketitle
\tableofcontents

\section{Introduction}

The classical (negative) Poisson kernel in the unit disc $\D:=\{\zeta\in \C: |\zeta|<1\}$ with pole at $p\in\partial \D$  is defined as $P_{\D,p}(\zeta)=-\frac{1-|\zeta|^2}{|p-\zeta|^2}$. It is a harmonic function, which is zero on $\partial \D\setminus\{p\}$ and has a simple pole along non-tangential limits at $p$. The sub-level sets of $P_{\D, p}$ are the horocycles $E(p,R)$, $R>0$, with vertex in $p$, which are just discs in $\D$ tangent to $p$ (see, {\sl e.g.}, \cite[Section 1.4]{BCDbook}). When $p=1$ we simply write $P_\D$ instead of $P_{\D, 1}$. 

The classical Phragm\'en-Lindel\"of Theorem (see, {\sl e.g.} \cite[Lemma 5.2]{BPT}) states that, for every $c>0$, $cP_{\D}$ is the maximal element of the family of negative subharmonic functions $u$  in $\D$ such that $\limsup_{r\to 1}u(r)(1-r)\leq -2c$. This maximality is fundamental to proving the following version of the classical Julia-Wolff-Carath\'eodory's Theorem (see, {\sl e.g.} \cite[Section 1.4]{BCDbook} or \cite[Section 1.2]{Ababook}):
\begin{theorem}[Julia-Wolff-Carath\'eodory's Theorem]\label{JWC-intro}
Let $f:\D \to \D$ be holomorphic. Let $p\in \partial \D$ and let
\[
\lambda_p:=\inf_{q\in\partial \D}\sup_{z\in \D}\frac{P_{\D,p}(z)}{P_{\D,q}(f(z))}.
\]
If $\lambda_p<+\infty$ then there exists a unique point $q\in \partial D$ such that $f(E(p,R))\subseteq E(q,\lambda_p R)$  for all $R>0$. Moreover, $\angle\lim_{z\to p}f(z)=q$ and $\angle\lim_{z\to p}f'(z)=\overline{p}q\lambda_p$.
\end{theorem}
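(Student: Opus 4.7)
The plan is to translate the intrinsic formulation via Poisson kernels into the classical horocycle language and then invoke the Phragm\'en-Lindel\"of maximality of $cP_{\D}$ already recalled. The starting observation is that $E(p,R)=\{z\in\D : P_{\D,p}(z)<-1/R\}$, and, since both $P_{\D,p}$ and $P_{\D,q}\circ f$ are strictly negative on $\D$, the inequality $\sup_{z\in\D}\frac{P_{\D,p}(z)}{P_{\D,q}(f(z))}\leq C$ is equivalent to the pointwise bound $CP_{\D,q}(f(z))\leq P_{\D,p}(z)$, and in turn to the horocycle inclusion $f(E(p,R))\subseteq E(q,CR)$ for every $R>0$. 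Thus the theorem boils down to selecting the optimal $q$ and exploiting this inclusion.

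Next I would establish existence and uniqueness of the optimal $q$. The function $\Lambda(q):=\sup_{z\in\D}\frac{P_{\D,p}(z)}{P_{\D,q}(f(z))}$ is lower semicontinuous on the compact set $\partial\D$ (being a pointwise supremum of continuous functions of $q$), so the infimum $\lambda_p$ is attained at some $q\in\partial\D$ whenever it is finite, and by the above dictionary $f(E(p,R))\subseteq E(q,\lambda_p R)$ for every $R>0$. For uniqueness, if some $q'\neq q$ also attained $\lambda_p$, then $f(E(p,R))\subseteq E(q,\lambda_p R)\cap E(q',\lambda_p R)$ for every $R>0$; for $R$ small enough these horocycles are disjoint (their closures meet $\partial\D$ only at $q$ and $q'$), contradicting $f(E(p,R))\neq\emptyset$. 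The genuinely constant case $f\equiv a$ forces $\lambda_p=0$ and is degenerate, so it is implicitly excluded.

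For the non-tangential limits, I would note that any non-tangential approach region at $p$ is eventually contained in $E(p,R)$ as $R\to 0$, hence its $f$-image is trapped in $E(q,\lambda_p R)$, which shrinks to $\{q\}$; this yields $\angle\lim_{z\to p}f(z)=q$. Testing the ratio along the radial segment $z=rp$ and using $(1-r)P_{\D,p}(rp)\to -2$ identifies $\lambda_p$ with the classical Julia number $\liminf_{z\to p}(1-|f(z)|)/(1-|z|)$, after which $\angle\lim_{z\to p}f'(z)=\overline{p}q\lambda_p$ is the classical Julia-Wolff-Carath\'eodory statement. I expect the only genuine obstacle to be Step~1, the dictionary between Poisson-kernel ratios and horocycle inclusions, since it is this viewpoint that the paper intends to upgrade to strongly pseudoconvex domains; once this reformulation is in place, existence and uniqueness of $q$ reduce to a compactness/lower-semicontinuity argument and the boundary regularity of $f$ and $f'$ requires no new input beyond the classical JWC.
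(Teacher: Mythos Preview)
The paper does not give its own proof of this statement; it is presented in the Introduction as the classical Julia--Wolff--Carath\'eodory theorem, with references to \cite{BCDbook} and \cite{Ababook}, and the only methodological hint is the remark that the Phragm\'en--Lindel\"of maximality of $cP_\D$ is ``fundamental'' to it.

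Your outline is essentially correct and is the natural way to unpack the reformulation. The dictionary in Step~1 is right, and the existence/uniqueness of $q$ via lower semicontinuity of $\Lambda$ and disjointness of small horocycles is clean. One small slip: if $f\equiv a\in\D$ is constant, then $P_{\D,q}(f(z))=P_{\D,q}(a)$ is a fixed negative number while $P_{\D,p}(z)\to-\infty$ near $p$, so the supremum of the ratio is $+\infty$ for every $q$ and hence $\lambda_p=+\infty$, not $0$; constants are therefore excluded by the hypothesis, not treated as a degenerate $\lambda_p=0$ case.

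Your Step~5 is where the argument becomes self-referential: once you have the horocycle inclusion and the angular limit $f(z)\to q$, the assertion $\angle\lim f'(z)=\overline{p}q\lambda_p$ \emph{is} the classical JWC conclusion, so you are (correctly) observing that the theorem is a restatement of the classical result in Poisson-kernel language rather than giving an independent proof. The paper's reason for stressing Phragm\'en--Lindel\"of is different: it is the mechanism behind ``local radial control $\Rightarrow$ global bound $\lambda_p<\infty$'' (compare Proposition~\ref{local-Julia}), and it is precisely this mechanism---maximality of $P_\D$, generalised to maximality of $\Omega_{D,p}$---that the paper exports to strongly pseudoconvex domains. Your argument does not need it because the hypothesis is already global, but that is the viewpoint the paper wants the reader to carry forward.
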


It is interesting to note that by the Phragm\'en-Lindel\"of theorem, the global condition $\lambda_p<+\infty$ is equivalent to the local condition $\limsup_{(0,1)\ni r\to 1}\frac{P_{\D,p}(rp)}{P_{\D,q}(f(rp))}<+\infty$.

Moreover, as it is well known, the Poisson kernel is very effective for reproducing formulas, in particular, it allows to reproduce  harmonic functions in $\D$ which extends continuously to the boundary (see, {\sl e.g.} \cite[Section 1.6]{BCDbook}). 

In higher dimension, in \cite{B-P, BPT}, the first and last named authors with G. Patrizio introduced a maximal plurisubharmonic function $\Omega_{D,p}$, in case $D\subset\C^n$ is a bounded strongly convex domains with smooth boundary and $p\in \partial D$ which solves a Phragm\'en-Lindel\"of type problem, being the maximum of all negative plurisubharmonic functions in $D$ which have at most simple poles along non-tangential limits at $p$. The function $\Omega_{D,p}$, which was constructed using the Chang-Hu-Lee spherical representation \cite{CHL}, is smooth and regular on $\overline{D}\setminus\{p\}$, zero on $\partial D\setminus\{p\}$ and solves a complex Monge-Amp\`ere type problem. Its sub-level sets correspond to Abate's horospheres \cite{Ababook, A0} (which are in fact the Buseman horospheres for the Kobayashi metric) and the associated Monge-Amp\`ere foliation is formed by the complex geodesics of $D$ whose closure contains $p$. The function $\Omega_{D,p}$ can also be used to reproduce pluri(sub)harmonic functions, as it is essentially the Demailly's \cite{De0} Poisson measure on $\partial D$. Also, in \cite[Section 2]{BCD}, it was proved that $\Omega_{D,p}$ can be used to obtain a version of Julia's Lemma and Julia-Wolff-Carath\'eodory's Theorem in strongly convex domains, relating Abate's \cite{A0} version of those theorems to the pluricomplex Poisson kernel. For those reasons, $\Omega_{D,p}$ was called the pluricomplex Poisson kernel of $D$ with pole at $p$.

In the recent paper \cite{HW}, via a careful study of properties of complex geodesics in lower regular convex domains, X. Huang and X. Wang showed that a pluricomplex Poisson kernel is well defined, with essentially the same properties, in bounded strongly linearly convex domains with $C^3$-smooth boundary.

Also, in \cite{Po}, E. Poletsky used the pluricomplex Poisson kernel introduced in \cite{B-P, BPT} to show that the pluripotential boundary of a bounded smooth strongly convex domain is homeomorphic to its Euclidean boundary. 

In this paper we deal with the pluricomplex Poisson kernel for bounded strongly pseudoconvex domains with smooth boundary. Using the techniques introduced in \cite{HW} one can lower the required boundary regularity, but we are not interested in this aspect in this paper.

 Let $D\subset \C^n$ be a strongly pseudoconvex domain with smooth boundary. Part of the results are in fact proved for strongly pseudoconvex domains in Stein manifolds,  but for the sake of simplicity, in this introduction we restrict to the case of $\C^n$.  

Let $p\in\partial D$ and let $\nu_p$ be the outer unit normal of $\partial D$ at $p$. We consider the following family $\mathcal S_p$:
\begin{equation*}
\begin{cases}
u \in \ps (D) \\
u<0 \quad \hbox{in $D$}\\
\displaystyle{\limsup_{t\to 1} u(\gamma(t))(1-t)\leq
-2\Re\frac{1}{\langle \gamma'(1), \nu_p\rangle}},
\end{cases}
\end{equation*}
where $\gamma:[0,1]\to D\cup\{p\}$ is any smooth curve such that $\gamma([0,1))\subset D$, $\gamma(1)=p$,  and $\gamma'(1)\not\in T_p\partial D$, $\ps(D)$ denotes the family of plurisubharmonic functions in $D$ and $\langle \cdot, \cdot \rangle$ denotes the standard Hermitian product in $\C^n$.

Our first result is the following:

\begin{theorem}\label{Thm:intro1}
Let $D\subset \C^n$ be a strongly pseudoconvex domain with smooth boundary and let $p\in \partial D$. Then, there exists a maximal plurisubharmonic function $\Omega_{D,p}\in \mathcal S_p$, which we call the {\sl pluricomplex Poisson kernel} of $D$ at $p$, such that $u\leq \Omega_{D,p}$ for all $u\in \mathcal S_p$. Moreover, $\Omega_{D,p}$ is continuous in $\overline{D}\setminus\{p\}$, $\Omega_{D,p}(x)=0$ for all $x\in \partial D\setminus\{p\}$ and
\[
\lim_{t\to 1} \Omega_{D,p}(\gamma(t))(1-t)=
  -2\Re\frac{1}{\langle \gamma'(1), \nu_p\rangle},
\]
for any $\gamma:[0,1]\to D\cup\{p\}$ which is a smooth curve such that $\gamma([0,1))\subset D$, $\gamma(1)=p$ and $\gamma'(1)\not\in T_p\partial D$.
\end{theorem}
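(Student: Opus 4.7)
The plan is to define
\[
\Omega_{D,p}(z) := \sup\{u(z) : u\in \mathcal S_p\}
\]
and then to establish the desired properties for this envelope (possibly after taking its upper semicontinuous regularization $\Omega_{D,p}^*$). I would proceed in five main steps.

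\textbf{Step 1: non-emptiness and local upper bound.} First I would build an element $\hv\in\mathcal S_p$ that already achieves the correct non-tangential asymptotic at $p$, to be used as a ``supersolution''. Near $p$ strong pseudoconvexity allows a local biholomorphism straightening $\partial D$ to a strictly convex hypersurface, so that the known construction of Bracci--Patrizio--Trapani \cite{B-P,BPT} produces a pluricomplex Poisson kernel on a small strictly convex neighborhood. Gluing this local model with a large negative constant via the standard ``regularized maximum'' of plurisubharmonic functions (which works globally on $D$ using the Diederich--Fornæss plurisubharmonic bumping available for strongly pseudoconvex domains) yields an element $\hv\in\mathcal S_p$. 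Symmetrically, from $\hv$ and a barrier argument I would derive a locally uniform upper bound on $\mathcal S_p$ over compact subsets of $\overline D\setminus\{p\}$.

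\textbf{Step 2: plurisubharmonicity and membership.} With the local bound in hand, $\Omega_{D,p}^*\in\ps(D)$ by Choquet's lemma. To see that $\Omega_{D,p}^*\in\mathcal S_p$ I would verify negativity (from the upper bound together with the fact that $\Omega_{D,p}^*$ cannot exceed $0$ anywhere), and the limsup condition along each admissible curve $\gamma$. The latter is immediate from the definition of $\mathcal S_p$ combined with the fact that, for each such $\gamma$, one has $u(\gamma(t))\le \hv(\gamma(t))+o(\tfrac{1}{1-t})$ for every $u\in\mathcal S_p$, obtained by a Phragm\'en--Lindel\"of comparison against $\hv$ on tubular neighborhoods of $\gamma$.

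\textbf{Step 3: boundary behavior away from $p$.} For $x\in\partial D\setminus\{p\}$ I would use that strongly pseudoconvex domains admit negative local plurisubharmonic peak functions: there exists a neighborhood $U$ of $x$ and $\psi\in\ps(D\cap U)$ with $\psi<0$, $\psi(z)\to 0$ as $z\to x$, and $\psi\le -c<0$ outside $U$. Combined with the global upper bound on $\mathcal S_p$ from Step~1, $\psi$ (suitably rescaled and glued) shows $\Omega_{D,p}^*(z)\to 0$ as $z\to x$.

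\textbf{Step 4: equality in the non-tangential limit.} The inequality
\[
\limsup_{t\to 1}\Omega_{D,p}(\gamma(t))(1-t)\le -2\Re\frac{1}{\langle\gamma'(1),\nu_p\rangle}
\]
is built into membership in $\mathcal S_p$. For the opposite inequality I would use $\hv\le \Omega_{D,p}^*$ combined with the fact that, by construction, $\hv$ attains this value along $\gamma$; it therefore suffices to verify that $\hv(\gamma(t))(1-t)$ converges (not merely limsup) to the prescribed quantity, which follows from the explicit strongly convex model.

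\textbf{Step 5: continuity.} Upper semicontinuity is automatic; for continuity on $\overline D\setminus\{p\}$ I would identify $\Omega_{D,p}^*$ as the solution of a Dirichlet--Monge--Amp\`ere problem on $D$ with boundary value $0$ on $\partial D\setminus\{p\}$ and singular asymptotics at $p$. By Bedford--Taylor theory for strongly pseudoconvex domains (continuity of the Perron envelope when an elementary continuous barrier exists), the continuous barriers $\hv$ and $\psi$ from Steps 1 and 3 suffice to conclude continuity up to the boundary away from $p$.

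\textbf{Main obstacle.} The delicate point is Step~1: constructing a truly global element of $\mathcal S_p$ with \emph{exactly} the right non-tangential pole at $p$. In the strongly convex case this is provided by the explicit complex-geodesic construction of \cite{B-P,BPT}, but in the strongly pseudoconvex setting global complex geodesics emanating from $p$ are not available; hence the construction must proceed by local convexification plus a careful plurisubharmonic gluing that preserves the asymptotic singularity while ensuring global negativity. Once this single ``extremal competitor'' is in place, the remaining steps are standard pluripotential-theoretic manipulations.
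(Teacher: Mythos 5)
Your Steps~1--4 mirror the paper's Section~2 in outline, though the paper's construction of the ``extremal competitor'' is different and avoids a difficulty in yours. Rather than glueing a local convex model with a negative constant, the paper takes a global holomorphic peak function $f:\overline D\to\overline{\mathbb D}$ at $p$ and sets $\widetilde u := \lambda\,P_{\mathbb D}\circ f$ with $\lambda=df_p(\nu_p)$ (Lemma~\ref{esiste limitato}); this is already a single negative plurisubharmonic function on all of $D$ with the exact pole, and is then patched on $\partial D\setminus\{p\}$ using a peak function at a second boundary point $q$. Your glueing of a local BPT kernel $\Omega_{B,p}$ with a large negative constant does not produce a plurisubharmonic extension across $\partial B\cap D$ (there $\Omega_{B,p}\to 0$, so $\max$ with a negative constant just equals $\Omega_{B,p}$ and nothing has been extended; $\min$ is not psh). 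More fundamentally, the working mechanism in the paper is not a single supersolution but the comparison Lemma~\ref{palla-equiv}: the family $\mathcal S_{\alpha_p}(D)$ equals $\{u\in\ps(D),\ u<0,\ u|_{D'}\le\Omega^{\alpha_p}_{D',p}\}$ for any strongly convex $D'\subset D$ tangent at $p$, which transfers the entire strongly convex machinery to $D$.

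Step~5 is where the genuine gap lies. You invoke continuity of Perron envelopes from Bedford--Taylor via continuous barriers, but that theory applies to the Dirichlet problem with \emph{continuous} (in particular, bounded) boundary data, whereas $\Omega_{D,p}$ has a simple pole at $p$ and $\mathcal S_p$ is defined by a negativity-plus-asymptotic condition, not by boundary values. Barriers give you control at $\partial D\setminus\{p\}$ but do not yield \emph{interior} lower semicontinuity of the envelope, and there is no tangential limit at $p$ to anchor a classical barrier argument. The paper flags exactly this: continuity ``does not come for free'' (unlike the strongly convex case, where the Chang--Hu--Lee spherical representation gives it), and Section~\ref{continuity} is devoted to it. The actual argument multiplies everything by $|h|^2$, where $h$ is a holomorphic function whose zero set is a hypersurface through $p$ tangent to $\partial D$, so as to kill the singularity; it then compares two cones (bounded-above psh versus continuous-up-to-$\overline D$ psh) via Jensen measures and Edwards' theorem, and uses Wikstr\"om's approximation through the Forn{\ae}ss embedding to prove that the two cones generate the same Jensen measures. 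This identifies the envelope with a supremum of continuous functions, giving lower semicontinuity. Your barrier argument does not capture any of this and, as stated, does not establish continuity on $D$.
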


This result holds also when $D$ is a smooth bounded strongly pseudoconvex domain in a Stein manifold  and $\langle \cdot, \cdot \rangle$ is replaced with a linear functional at $p$ which defines $T_p\partial D$ (see Section~\ref{one} for details).

In strongly convex domains the continuity of $\Omega_{D,p}$ and the behavior on $\partial D\setminus\{p\}$ come for free by the Chang-Hu-Lee spherical representation. In strongly pseudoconvex domains, this is not the case. In Proposition~\ref{prop-Omega} we prove the first properties of $\Omega_{D,p}$, while  continuity is proved in Section~\ref{continuity} using Jensen's measures.

In particular, by the previous theorem it follows that $\Omega_{D,p}$ is a solution of the following complex Monge-Amp\`ere problem:
\begin{equation*}
\begin{cases}
u\in \ps(D)\cap L^\infty_{loc}(D)\\
(dd^c u)^n=0 \quad \hbox{in $D$}\\
u<0 \quad \hbox{in $D$}\\
u(x)=0 \quad \hbox{for $x\in\partial D$}\\
u(z)\sim |z-p|^{-1} \quad \hbox{as $z\to p$ non-tangentially}.
\end{cases}
\end{equation*}

It is not known whether $\Omega_{D,p}$ is the {\sl unique} solution to such a problem, not even in the convex case. However, as in the convex case, we can prove that if $u$ is a maximal negative plurisubharmonic function in $D$ such that $\lim_{z\to x}u(z)=0$ for all $x\in\partial D$ and $\lim_{z\to p}\frac{\Omega_{D,p}(z)}{u(z)}=1$, then $u\equiv \Omega_{D,p}$ (see Proposition~\ref{Prop:unique}). 

Next, we study in depth the behavior of $\Omega_{D,p}$ near $p$. To this aim, we introduce a tool, which we call ``entrapping strongly pseudoconvex domains between strongly convex domains'', which might be useful for other purposes (see Section~\ref{Trap}). Using this, we are able to compare $\Omega_{D,p}$ with the pluricomplex Poisson kernels of strongly convex domains. More precisely, if $U$ is a small neighborhood of $p$ so that $B:=U\cap D$ is biholomorphic to a strongly convex domain with smooth boundary, then we show that $\lim_{z\to p}\frac{\Omega_{D,p}(z)}{\Omega_{B,p}(z)}=1$, and that the two kernels in fact coincide on the ``quasi'' complex tangential directions at $p$. From this, making use of Lempert's theory \cite{Le1, Le2, Le3} and Huang's preservation principle \cite{H},  we prove that there exists an open set $J\subset D$, whose closure contains $p$, and contains all ``quasi complex-tangential directions'' at $p$, such that $\Omega_{D,p}$ is $C^\infty$-smooth on $J$, $(dd^c \Omega_{D,p})^{n-1}\neq 0$ on $J$ and the associated Monge-Amp\`ere foliation on $J$ formed by complex geodesics of $D$ whose closure contains $p$, which are also holomorphic retracts of $D$ (see Proposition~\ref{Prop:lle}).

Then we extend  Julia's Lemma (see Theorem~\ref{Julia}) and Julia-Wolff-Carath\'eodory's Theorem (see Theorem~\ref{Thm:JWC}) to strongly pseudoconvex domains using the pluricomplex Poisson kernels. Versions of these theorems have been proved in strongly pseudoconvex domains by M. Abate \cite{A1}. The novelty of our result is that, as in Theorem~\ref{JWC-intro} and in strongly convex domains, we can  relate the number $\lambda_p$ with the behavior of the normal part of the derivative of $f$ along the normal direction, a link which was missing in Abate's result. We also show that the hypotheses of Abate's theorem are equivalent to the one using pluricomplex Poisson's kernels.

The final aim of the paper is to prove a representation formula for pluri(sub)harmonic functions on strongly pseudoconvex domains. We first show (see Proposition~\ref{Prop:uguali}) that, in case the pluricomplex Green function $G_D$ of $D$ is symmetric, then for all $z\in D$,
\begin{equation}\label{eq-intro}
 - \frac{\partial G_D(z,p)}{\partial \nu_p}=\Omega_{D,p}(z).
\end{equation}
In order to prove this formula, we use the symmetry of $G_D$ and a result of Z. B\l ocki \cite{Bl, Bl2} to show that $ - \frac{\partial G_D(\cdot,p)}{\partial \nu_p}$ is a maximal plurisubharmonic function in $D$, zero on $\partial D\setminus\{p\}$. Then, using the ``entrapping trick'' we show that $\lim_{z\to p}\frac{- \frac{\partial G_D(z,p)}{\partial \nu_p}}{\Omega_{D,p}(z)}=1$, from which it follows that the two functions are equal.

It should be noticed that, arguing as in \cite{Po}, the previous formula allows to prove that the Poletsky potential boundary of $D$ is homeomorphic to the Euclidean boundary of $D$.

Once we have \eqref{eq-intro} at hand, we can prove that  Demailly's Poisson measure on $\partial D$ concides with $|\Omega_{D,p}(z)|^n \omega_{\partial D}$, where $\omega_{\partial D}$ is a measure on $\partial D$ steaming essentially from the Levi form of $D$ (see Lemma~\ref{Lem:Green-Demailly}). Hence (see Theorem~\ref{Trepfor}), we get that  if $f$ is plurisubharmonic in $D$ and continuous on $\overline{D}$, 
\begin{equation*}
\begin{split}
f(z) &= \frac{1}{(2\pi)^n}\int_{\partial D} f(\xi) |\Omega_{D,\xi}(z)|^n\omega_{\partial D}(\xi)\\&-\frac{1}{(2\pi)^n}\int_{w\in D} |G_D(z, w)| dd^c f(w)\wedge (dd^c G_D(z,w))^{n-1}.
\end{split}
\end{equation*} 

\medskip

We warmly thank the referees for the careful reading of the original manuscript and for many comments which improved a lot the paper.

\section{Definition and first properties}\label{one}

Let $M$ be a  Stein manifold with complex structure $J$.
Let $D\subset\subset M$ be a strongly pseudoconvex domain with
smooth boundary. Given $p\in \de D$ we let $T_p^{\mathbb C} \de
D$ be the complex tangent space to $\de D$ at p. We write
\[
T_p \de D\setminus T_p^{\mathbb C} \de D= K^+\cup K^-,
\]
where $v \in K^+$ if $-Jv$ points outside the domain $D$. Let
$\alpha_p: T_p\de D\to \R$ be a linear map such that ${\sf Ker}
\alpha_p = T_p^{\mathbb C} \de D$ and $\alpha_p|_{K^+}>0$, in
other words, $\alpha_p$ is a point of the fiber over $p$ of the
bundle of contact forms over $\de D$. Notice that $\alpha_p$ is
uniquely defined up to multiplication by a positive constant.
In the sequel we will denote by $\mathcal H_p(\de D)$ the set
of forms $\alpha_p$ as above.

Let $v_0\in K^+$ be such that $\alpha_p(v_0)=1$. Given $w\in
T_p M$ we can write uniquely $w=u - \theta_{v_0}(w) J v_0$ for
some $u\in T^\C_p\de D$ and  $\theta_{v_0}(w)\in \C$.

Note that if $v_1\in T_p \partial D$ is another  vector such that
$\alpha_p(v_1)=1$, then $v_1-v_0\in T^\C_p\de D$ and
$\theta_{v_1}(w)=\theta_{v_0}(w)$ for every $w\in T_p M$. Thus $\theta_{v_0}$
depends only on $\alpha_p$ and we can write
$\theta_{\alpha_p}:=\theta_{v_0}$. Note that
$\theta_{\alpha_p}: T_p M\to \C$ is $\C$-linear, $T_p\de D=
\ker \Re (\theta_{\alpha_p})$ and $\Im (\theta_{\alpha_p}|_{T_p
\de D})=\alpha_p$.

\begin{definition}
For short, in the rest of the paper, we call the couple $(\alpha_p, \theta_{\alpha_p})$ a {\sl defining couple} for $T_p^\C\partial D$.
\end{definition}

We let $\Gamma_p$ be the set of all $C^\infty$ curves
$\gamma:[0,1]\to D\cup\{p\}$ such that $\gamma(t)\in D$ for $t\in [0,1)$, $\gamma(1)=p$ and
$\gamma'(1)\not\in T_p\de D$. Note that, for what we discussed
above, $\gamma'(1)\not\in T_p\de D$ if and only if $\Re
\theta_{\alpha_p}(\gamma'(1))> 0$.

Consider the following family $\mathcal S_{\alpha_p}(D)$:
\begin{equation}\label{family1}
\begin{cases}
u \in \ps (D) \\
u<0 \quad \hbox{in $D$}\\
\displaystyle{\limsup_{t\to 1} u(\gamma(t))(1-t)\leq
-2\Re[\theta_{\alpha_p}(\gamma'(1))^{-1}]} \quad \hbox{for all
}\gamma\in \Gamma_p,
\end{cases}
\end{equation}

\begin{remark}\label{Poissondisco}
Let $\D:=\{\zeta\in \C: |\zeta|<1\}$. Let
$P_\D(\zeta):=-(1-|\zeta|^2)/(|1-\zeta|^2)$ be the Poisson
kernel. If $\sigma:[0,1]\to \oD$ is a $C^1$-curve such that
$\sigma(1)=1$ and $\sigma([0,1))\subset\D$ with $\sigma'(1)\neq
0$ then
\[
\lim_{t\to 1^-} P_\D(\sigma(t))(1-t)=-2\Re
\frac{1}{\sigma'(1)}.
\]
\end{remark}

\begin{lemma}\label{esiste limitato}
Let $M$ be a Stein manifold and let $D\subset\subset M$ be a
strongly pseudoconvex domain with smooth boundary. Let $p, q\in \de D$,
$q\neq p$. For every $\alpha_p\in \mathcal H_p(\de D)$, the
family $\mathcal S_{\alpha_p}(D)$ contains a function $u_q:D\to
\R$ with the following properties:
\begin{enumerate}
  \item $u_q$ is continuous in $D$ and extends continuously on $\de
D\setminus \{p\}$,
  \item  $u_q(q)=0$,
  \item $\lim_{t\to 1} u_q(\gamma(t))(1-t)=-2 \Re
  [\theta_{\alpha_p}(\gamma'(1))]^{-1}$ for all $\gamma\in
  \Gamma_p$.
\end{enumerate}
\end{lemma}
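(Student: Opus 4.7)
My plan is to construct $u_q$ as the real part of a holomorphic function on a Stein neighborhood of $\overline{D}$, explicitly assembled from two global Henkin--Skoda support functions (one at $p$, one at $q$) plus a single additive constant. The reason for avoiding a local model glued with a max-type argument is that the maximum of a bounded function and a function with a first-order pole is bounded, which would destroy the asymptotic in property~(3).

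The starting ingredient is a global holomorphic support at $p$. Using strong pseudoconvexity of $D$ at $p$ and H\"ormander's $\bar\partial$-estimates on a Stein neighborhood $\Omega$ of $\overline{D}$ in $M$, one produces a holomorphic function $F_p$ on $\Omega$ with $F_p(p)=0$, $dF_p|_p=\theta_{\alpha_p}$, and $\Re F_p<0$ on $\overline{D}\setminus\{p\}$ (the standard Henkin--Skoda construction starting from the Levi polynomial of a strictly plurisubharmonic defining function, then extending holomorphically on $\Omega$). The function $h_p(z):=2\Re(1/F_p(z))$ is then pluriharmonic, continuous, and negative on $\overline{D}\setminus\{p\}$. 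A Taylor expansion of $F_p$ at $p$ together with $\gamma(t)-p=-\gamma'(1)(1-t)+O((1-t)^2)$ yields
\[
(1-t)\,h_p(\gamma(t)) \xrightarrow[t\to 1]{} -2\Re\bigl[\theta_{\alpha_p}(\gamma'(1))^{-1}\bigr]
\]
for every $\gamma\in\Gamma_p$.

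The same Henkin--Skoda construction applied at $q$ gives a holomorphic $F_q$ on $\Omega$ with $F_q(q)=0$, $\Re F_q<0$ on $\overline{D}\setminus\{q\}$, and the quadratic Levi-form estimate $\Re F_q(z)\le -c_q|z-q|^2$ for $z\in\partial D$ close to $q$. By Cartan's Theorem~A on the Stein manifold $\Omega$, I then add to $F_p$ a holomorphic correction vanishing to second order at $p$ (hence not altering the leading behavior of $h_p$ at $p$) so that the restriction of $dh_p|_q$ to $T_q\partial D$ vanishes. Setting
\[
u_q(z):=h_p(z)-h_p(q)+A\,\Re F_q(z)
\]
for a large constant $A>0$, property~(2) is immediate, and property~(3) is unaffected since the additive constant $-h_p(q)$ and the bounded function $A\Re F_q$ contribute nothing to the asymptotic $(1-t)\,u_q(\gamma(t))$ at $p$. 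Negativity of $u_q$ on $\partial D\setminus\{p\}$ is checked in two regimes: on $\partial D\setminus(\{p\}\cup N(q))$ for a small neighborhood $N(q)$ of $q$, $\Re F_q$ is bounded away from $0$ while $h_p-h_p(q)$ is bounded, so $A$ large forces $u_q<0$; on $N(q)\cap\partial D$, the vanishing of the $\partial D$-tangential differential of $h_p$ at $q$ gives $h_p(z)-h_p(q)=O(|z-q|^2)$ on $\partial D$, dominated by $A\Re F_q(z)\le -Ac_q|z-q|^2$ for $A$ sufficiently large. The maximum principle for pluriharmonic functions then yields $u_q<0$ on $D$.

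The main obstacle is the simultaneous arrangement of (i) the correct first-order pole of $h_p$ at $p$, (ii) the exact vanishing $u_q(q)=0$, and (iii) the inequality $u_q\le 0$ on $\partial D\setminus\{p\}$. All three are handled by Stein-manifold techniques (Henkin--Skoda global supports and a Cartan--A modification of $F_p$), which provide the flexibility to kill the tangential differential of $h_p$ at $q$ without perturbing the singularity of $h_p$ at $p$.
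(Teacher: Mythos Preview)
Your dismissal of the max-gluing approach rests on a misreading of how such constructions are carried out. One does not take a global maximum; rather one fixes a neighborhood $U$ of $p$ with $q\notin\overline{U}$, keeps the singular model $\tilde u$ unchanged on $U$, and only on $D\setminus U$ replaces it by $\max\{\tilde u,\rho\phi\}$ for a suitable psh function $\phi$ with $\phi(q)=0$, $\phi<0$ on $\overline{D}\setminus\{q\}$, and a large constant $\rho$ chosen so that $\rho\phi<\tilde u$ on $\partial U\cap\overline{D}$. The asymptotic at $p$ is preserved because on $U$ the function is literally $\tilde u$. This is exactly what the paper does, with $\tilde u=\lambda\,P_{\D}\circ f$ for a holomorphic peak function $f$ at $p$ (so that $df_p=\lambda\theta_{\alpha_p}$ by Hopf's lemma and the fact that $|f|^2$ attains its boundary maximum at $p$) and $\phi=|g|^2-1$ for a peak function $g$ at $q$. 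The whole argument takes a few lines.

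Your alternative route has a genuine gap. After adding to $F_p$ a holomorphic correction $G$ vanishing to second order at $p$ so that $dh_p|_q$ is tangentially zero, you no longer know that $F_p+G$ is zero-free on $\overline{D}\setminus\{p\}$, nor that $\Re(F_p+G)<0$ there. Cartan's Theorem~A produces a function with the prescribed jets, but gives no control on its size on $\overline{D}$; a correction of the required magnitude at $q$ may well introduce new zeros of $F_p+G$ in $D$, at which $h_p=2\Re(1/(F_p+G))$ is undefined and the whole construction collapses (neither pluriharmonicity nor boundedness of $h_p-h_p(q)$ away from $p$ survives). A clean repair is to leave $F_p$ untouched and instead add to $h_p$ a pluriharmonic term $\Re\psi$ with $\psi$ holomorphic on a Stein neighborhood, vanishing to second order at $p$, and with $d\psi|_q$ prescribed so that $d(h_p+\Re\psi)|_q$ vanishes on $T_q\partial D$; then $h_p$ keeps its sole singularity at $p$ and your quadratic comparison near $q$ goes through. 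Even so, the resulting argument is considerably heavier than the paper's two-peak-function-plus-local-max proof.
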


\begin{proof}
Replacing $M$ with a suitable Stein neighborhood of $\overline{D}$, we can assume that $\overline{D}$ is holomorphically convex in $M$. 

We can choose a positive definite Hermitian metric $\la \cdot,
\cdot \rangle$ on $T_p M$ and $v_0\in K^+$  such that  $\nu_p:=- J v_0$ is a unit
normal vector pointing outward. Thus for every $w\in T_p M$ we
have $\theta_{\alpha_p}(w)=\la w, \nu_p\ra$.

Let $f:\overline{D}\to \C$ be a holomorphic peak function for
$D$ at $p$, namely, $f$ is a smooth function on $\overline{D}$
such that $f:D\to \D$ is holomorphic, $f(p)=1$ and
$f(\overline{D}\setminus\{p\})\subset \D$ (in case $M\neq \C^n$, it exists because  $\overline{D}$  is holomorphically convex in $M$, see
\cite[IX.C.7]{GR} \cite[Corollary 11]{Fo}).

By Hopf's lemma, $\lambda:=df_p(\nu_p)=\frac{\de f}{\de
\nu_p}(p)> 0$. Also, $|f|^2$ restricted to $\de D$ has a
maximum at $p$ and $f(p)=1$. Thus $d(|f|^2)_p=2\Re (f(p)
df_p)=2 \Re (df_p)$ vanishes on $T_p \de D$. Hence $df_p$ is a
positive multiple of $\theta_{\alpha_p}$, {\sl i.e.},
$df_p(w)=\lambda \la w,\nu_p\ra$ for all $w\in T_pM$.

Now, let $\tu(z):=\lambda P_{\D,1}\circ f$. Let $\gamma\in
\Gamma_p$. Then by Remark \ref{Poissondisco}
\begin{equation*}
\lim_{t\to 1} \tu(\gamma(t))(1-t) = - 2\lambda \Re (df_p
(\gamma'(1)))^{-1}) = -\frac{2\lambda}{\lambda} \Re
\frac{1}{\la \gamma'(1), \nu_p\ra} = -2\Re \frac{1}{\la
\gamma'(1), \nu_p\ra}.
\end{equation*}
Now let $g:\overline{D}\to \C$ be a holomorphic peak function
for $D$ at $q$. Let $\phi(z):=|g(z)|^2-1$. Then $\phi\in
\ps(D)$, $\phi$ is continuous up to $\overline{D}$, $\phi(q)=0$
and $\phi(z)<0$ for $z\in \overline{D}\setminus\{p\}$. Let $U$
be a relatively compact open neighborhood of $p$ whose closure
does not contain $q$. Then  $\phi|_{\overline{U}\cap
\overline{D}}<0$. Since both $\tu$ and $\phi$ are continuous on
$\de U \cap \overline{D}$, there exists $\rho>0$ such that
$\rho \phi < \tu$ on $\de U \cap \overline{D}$.  Therefore the
function
\[
u_q(z):=
\begin{cases}
\tu(z) & \hbox{if}\ z\in U\\
\max\{\tu(z), \rho \phi(z)\} & \hbox{if}\ z\in D\setminus U
\end{cases}
\]
belongs to $\mathcal S_{\alpha_p}(D)$ and has the properties
stated in the lemma.
\end{proof}

\begin{definition}
Let $M$ be a Stein manifold. Let $D\subset\subset M$ be a
strongly pseudoconvex domain with smooth boundary. Let $p\in \de D$ and
let $\alpha_p\in \mathcal H_p(\de D)$. The {\sl pluricomplex
Poisson kernel of $D$ at $p$ relative to $\alpha_p$} is
\[
\Omega^{\alpha_p}_{D,p}(z):=\sup \{u(z): u\in \mathcal
S_{\alpha_p}(D)\}.
\]
\end{definition}

\begin{remark}\label{pois-conv}
The pluricomplex Poisson kernel defined in \cite{B-P} and
\cite{BPT} for a strongly convex domain $D\subset \C^n$ is the
one corresponding to the standard form $\kappa_p$ such that
$\kappa_p(J\nu_p)=1$ and ${\sf Ker} \kappa_p =T_p^\C(\de D)$
where $\nu_p$ is the outer unit normal vector to $\de D$ at $p$ with
respect to the standard Hermitian metric of $\C^n$.
\end{remark}

\begin{proposition}\label{ovvia}
Let $M$ be a Stein manifold. Let $D\subset\subset M$ be a
strongly pseudoconvex domain with smooth boundary. Let $p\in \de D$ and
let $\alpha_p\in \mathcal H_p(\de D)$. Then
\begin{enumerate}
  \item for all $\rho>0$, $u\in \mathcal S_{\rho \alpha_p}(D)$
  if and only if $\rho u\in \mathcal S_{\alpha_p}(D)$. In
  particular, $\Omega^{\alpha_p}_{D,p}=\rho \Omega^{\rho
  \alpha_p}_{D,p}$.
  \item Let $M'$ be another complex manifold and let
  $D'\subset\subset M'$ be a strongly pseudoconvex domain with
smooth boundary. Let $q\in \de D'$. Let $F:\overline{D}\to
\overline{D'}$ be a diffeomorphism such that $F:D\to D'$ is
holomorphic. Then $u \in \mathcal S_{\alpha_q}(D')$ if and only
if $F^\ast(u):=u\circ F\in \mathcal S_{F^\ast(\alpha_q)}(D)$.
In particular
$F^\ast(\Omega^{\alpha_q}_{D',q})=\Omega^{F^\ast(\alpha_q)}_{D,F^{-1}(q)}$.
\end{enumerate}
\end{proposition}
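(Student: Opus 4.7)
The plan is to base both parts on the following uniqueness observation: $\theta_{\alpha_p}$ is the unique $\mathbb{C}$-linear functional on $T_pM$ whose imaginary part restricts to $\alpha_p$ on $T_p\partial D$, because $T_p\partial D\subset T_pM$ is a real hyperplane and $\mathbb{C}$-linearity determines the real part from the imaginary part.

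For part (1), I will apply this to the $\mathbb{C}$-linear functional $\rho\,\theta_{\alpha_p}$, which has imaginary part $\rho\alpha_p$ on $T_p\partial D$, to conclude that $\theta_{\rho\alpha_p}=\rho\,\theta_{\alpha_p}$. Hence for every $\gamma\in\Gamma_p$,
\[
-2\Re[\theta_{\rho\alpha_p}(\gamma'(1))^{-1}]=-\tfrac{2}{\rho}\,\Re[\theta_{\alpha_p}(\gamma'(1))^{-1}].
\]
Since $u\in\ps(D)$ and the negativity $u<0$ are preserved by positive scaling, and the limsup inequality in \eqref{family1} rescales exactly by the factor $\rho$, I will obtain $u\in\mathcal{S}_{\rho\alpha_p}(D)\iff\rho u\in\mathcal{S}_{\alpha_p}(D)$; the identity $\Omega^{\alpha_p}_{D,p}=\rho\,\Omega^{\rho\alpha_p}_{D,p}$ will then follow by taking suprema.

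For part (2), I first want to note that, since $F|_D$ is holomorphic and $F$ is a diffeomorphism of the closures, $dF_p$ is a $\mathbb{C}$-linear isomorphism sending $T_p\partial D$ onto $T_q\partial D'$ (where $q=F(p)$) and $T_p^\mathbb{C}\partial D$ onto $T_q^\mathbb{C}\partial D'$, so $F^\ast(\alpha_q):=\alpha_q\circ dF_p|_{T_p\partial D}\in\mathcal{H}_p(\partial D)$. Applying the uniqueness observation to $\theta_{\alpha_q}\circ dF_p$ (which is $\mathbb{C}$-linear with imaginary part $F^\ast(\alpha_q)$ on $T_p\partial D$) will yield
\[
\theta_{F^\ast(\alpha_q)}=\theta_{\alpha_q}\circ dF_p.
\]
I will then combine this with two standard facts: $u\circ F\in\ps(D)$ iff $u\in\ps(D')$ (pullback by a holomorphic map), and $\gamma\mapsto F\circ\gamma$ is a bijection $\Gamma_p\leftrightarrow\Gamma_q$ with $(F\circ\gamma)'(1)=dF_p(\gamma'(1))$. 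The limsup condition for $u\circ F$ along $\gamma\in\Gamma_p$ at $p$ then transcribes verbatim to the limsup condition for $u$ along $F\circ\gamma\in\Gamma_q$ at $q$, proving equivalence of the families; taking suprema yields $F^\ast(\Omega^{\alpha_q}_{D',q})=\Omega^{F^\ast(\alpha_q)}_{D,F^{-1}(q)}$.

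The main (rather modest) hurdle will be the identity $\theta_{F^\ast(\alpha_q)}=\theta_{\alpha_q}\circ dF_p$; the uniqueness argument sketched above bypasses any explicit coordinate computation. The boundary smoothness of $dF$ and its $\mathbb{C}$-linearity at $p$ are automatic from the assumption that $F|_D$ is holomorphic and that $F$ is a diffeomorphism of the closures, so no serious technical obstacle is expected.
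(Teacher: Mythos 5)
Your central ``uniqueness observation'' is false as stated. A $\C$-linear functional $\psi$ on $T_pM$ is \emph{not} determined by the restriction of $\Im\psi$ to the real hyperplane $T_p\partial D$: $\C$-linearity recovers $\Re\psi$ from $\Im\psi$ via $\Re\psi(v)=\Im\psi(Jv)$, so knowing $\Im\psi$ on $T_p\partial D$ only gives $\Re\psi$ on $J(T_p\partial D)$, which is a \emph{different} hyperplane; one real degree of freedom remains (namely $\Re\psi(v_0)$ for $v_0\in K^+$). Concretely, since $\theta_{\alpha_p}$ takes purely imaginary values on $T_p\partial D=\ker\Re\theta_{\alpha_p}$, the functional $(1+ic)\theta_{\alpha_p}$ has $\Im\bigl((1+ic)\theta_{\alpha_p}\bigr)\big|_{T_p\partial D}=\alpha_p$ for \emph{every} $c\in\R$, so the alleged uniqueness fails even for $n=1$. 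What does characterize $\theta_{\alpha_p}$ is the pair of conditions the paper records just after its construction: $\Im(\theta_{\alpha_p}|_{T_p\partial D})=\alpha_p$ \emph{together with} $T_p\partial D=\ker\Re\theta_{\alpha_p}$.

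The gap is repairable and the rest of your argument is sound: both candidate functionals satisfy the second condition. For (1), $\ker\Re(\rho\theta_{\alpha_p})=\ker\Re\theta_{\alpha_p}=T_p\partial D$; for (2), $\ker\Re(\theta_{\alpha_q}\circ dF_p)=dF_p^{-1}(T_q\partial D')=T_p\partial D$, so the corrected uniqueness does yield $\theta_{\rho\alpha_p}=\rho\theta_{\alpha_p}$ and $\theta_{F^\ast(\alpha_q)}=\theta_{\alpha_q}\circ dF_p$. You also skip over the one point the paper's proof explicitly flags: to conclude $F^\ast(\alpha_q)\in\mathcal H_p(\partial D)$ you must verify $F^\ast(\alpha_q)|_{K^+}>0$, i.e., that $dF_p$ sends $K^+$ to $K^+$; this is not a formal consequence of $dF_p$ being a $\C$-linear isomorphism of tangent spaces but uses that $F$ maps $D$ into $D'$ (so $dF_p$ sends the inward side to the inward side). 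With these two fixes the proposal matches the paper's (unwritten) ``direct computation'' in substance, with a somewhat more conceptual organization.
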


\begin{proof}
These are just direct computations. We only point out that, in
(2), $dF_p(K^+)=K^+$ for all $p\in \de D$, so that if
$\alpha_q\in \mathcal H_q(\de D')$ then $F^\ast(\alpha_q)\in
\mathcal H_{F^{-1}(q)}(\de D)$.
\end{proof}

In the sequel we will need to characterize the pluricomplex
Poisson kernel as the supremum of another (equivalent) family.
Let $M$ be a complex manifold. Let $D\subset\subset M$ be a
strongly pseudoconvex domain with smooth boundary. Let $p\in
\de D$ and let $\alpha_p\in \mathcal H_p(\de D)$. Let
$D'\subset D$ be a strongly pseudoconvex domain with smooth
boundary which is tangent to $D$ at $p$. Let
\begin{equation*}
\So_{D',\alpha_p}(D):=\{u\in \ps(D): u<0, u|_{D'}\leq
\Omega^{\alpha_p}_{D',p} \}.
\end{equation*}

\begin{lemma}\label{palla-equiv}
Let $M$ be a Stein manifold. Let $D\subset\subset M$ be a
strongly pseudoconvex domain with smooth boundary. Let $p\in \de D$ and
let $\alpha_p\in \mathcal H_p(\de D)$. Let $D'\subset D$ be a
domain with smooth boundary which is biholomorphic to a bounded
strongly convex domain of $\C^n$ with smooth boundary, and
assume that $D'$ is tangent to $D$ at $p$. Then $\mathcal
S_{\alpha_p}(D)=\So_{D',\alpha_p}(D)$.
\end{lemma}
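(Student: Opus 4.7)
The plan is to prove the equality by verifying both inclusions directly from the definitions, after first recording one fact about $\Omega^{\alpha_p}_{D',p}$ borrowed from the known strongly convex case. Since by hypothesis there is a smooth diffeomorphism $F\colon \overline{D'}\to \overline{\widetilde{D'}}$, biholomorphic on $D'$, with $\widetilde{D'}$ a bounded smooth strongly convex domain in $\C^n$, Proposition~\ref{ovvia}(2) identifies $\Omega^{\alpha_p}_{D',p}$ with the pluricomplex Poisson kernel of $\widetilde{D'}$ at $F(p)$. Invoking the strongly convex case treated in \cite{B-P, BPT}, I may take as known that $\Omega^{\alpha_p}_{D',p}$ is plurisubharmonic and strictly negative on $D'$, lies in $\mathcal S_{\alpha_p}(D')$, and realizes the limit
\[
\lim_{t\to 1}\Omega^{\alpha_p}_{D',p}(\sigma(t))(1-t)=-2\Re\bigl[\theta_{\alpha_p}(\sigma'(1))^{-1}\bigr]
\]
for every smooth curve $\sigma\colon [0,1]\to D'\cup\{p\}$ with $\sigma([0,1))\subset D'$, $\sigma(1)=p$ and $\sigma'(1)\notin T_p\partial D'=T_p\partial D$.

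For the inclusion $\mathcal S_{\alpha_p}(D)\subseteq \So_{D',\alpha_p}(D)$, let $u\in \mathcal S_{\alpha_p}(D)$. Then $u|_{D'}$ is plurisubharmonic and strictly negative. Any curve of the type required to test membership in $\mathcal S_{\alpha_p}(D')$ is automatically a member of $\Gamma_p$ (because $D'\subseteq D$ and $T_p\partial D'=T_p\partial D$), so the limsup condition defining $\mathcal S_{\alpha_p}(D)$ transfers verbatim to $u|_{D'}$. Hence $u|_{D'}\in \mathcal S_{\alpha_p}(D')$, and $u|_{D'}\le \Omega^{\alpha_p}_{D',p}$ by the definition of the kernel as a supremum, giving $u\in \So_{D',\alpha_p}(D)$.

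For the reverse inclusion, let $u\in \So_{D',\alpha_p}(D)$ and fix $\gamma\in \Gamma_p$. The central geometric step is that $\gamma(t)\in D'$ for $t$ sufficiently close to $1$. Pick smooth defining functions $\rho,\rho'$ of $D, D'$ near $p$; tangency and $D'\subseteq D$ allow rescaling so that $d\rho(p)=d\rho'(p)$, which forces $\rho'-\rho$ to vanish to second order at $p$. Non-tangentiality of $\gamma$ gives $\rho(\gamma(t))=-c(1-t)+O((1-t)^2)$ with $c=d\rho(p)(\gamma'(1))>0$, while $(\rho'-\rho)(\gamma(t))=O((1-t)^2)$; thus $\rho'(\gamma(t))<0$ for $t$ close to $1$. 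On the resulting terminal subinterval one has $u(\gamma(t))\le \Omega^{\alpha_p}_{D',p}(\gamma(t))$, and multiplying by $(1-t)>0$ and passing to the limsup, the limit recorded in the setup step yields exactly the required limsup bound for $u$ at $p$, so $u\in \mathcal S_{\alpha_p}(D)$.

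The only delicate point in the whole argument is the setup step: ensuring that the pointwise supremum defining $\Omega^{\alpha_p}_{D',p}$, rather than just its upper semicontinuous regularization, actually lies in $\mathcal S_{\alpha_p}(D')$ and realizes the correct limit along every non-tangential curve. This is the substantive content imported from the strongly convex theory via biholomorphic invariance. Once granted, both inclusions reduce to routine consequences of the definitions together with the elementary quadratic-versus-linear comparison enforced by tangency.
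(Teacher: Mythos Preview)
Your argument is essentially the same as the paper's: both inclusions are handled exactly as you describe, with the reverse inclusion resting on the fact that $\gamma$ eventually enters $D'$ together with the exact limit formula for $\Omega^{\alpha_p}_{D',p}$ imported from the strongly convex theory via Proposition~\ref{ovvia}(2). The one slip is that the smooth extension of $F$ to $\overline{D'}$ is \emph{not} part of the hypothesis---the lemma only assumes a biholomorphism $D'\to\widetilde{D'}$---and the paper invokes Fefferman's theorem \cite{Fe} to obtain it; without this, Proposition~\ref{ovvia}(2) does not apply and your setup step is unjustified.
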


\begin{proof}
Let $u\in \So_{D',\alpha_p}(D)$. If $\gamma\in \Gamma_p$, then
eventually the curve $\gamma$ is contained in $D'$. Let
$F:D'\to D''$ be a biholomorphism from $D'$ to a bounded
strongly convex domain $D''\subset \C^n$. By Fefferman's theorem
\cite{Fe} the map $F$ extends smoothly to $\overline{D'}$. By
Proposition \ref{ovvia}
\[
\Omega^{\alpha_p}_{D',p}=\rho^{-1} F^\ast(\Omega_{D'', F(p)}),
\]
where $\Omega_{D'',F(p)}$ is the pluricomplex Poisson kernel of
$D''$ at $F(p)$ relative to the standard form $\kappa_{F(p)}$
(see Remark \ref{pois-conv}) and $\rho>0$ is such that
$\alpha_p=\rho F^\ast(\kappa_{F(p)})$. By \cite[Corollary
5.3]{BPT} for all $\tg\in \Gamma_{F(p)}$ it holds
\begin{equation}\label{convo}
\lim_{t\to 1}\rho^{-1} \Omega_{D'',
F(p)}(\tg(t))(1-t)=-2\Re(\la \tg'(1),\nu_{F(p)}\ra^{-1}),
\end{equation}
where $\nu_{F(p)}$ is the standard outer unit normal vector to $\de
D''$ at $F(p)$. Hence
\begin{equation*}
\begin{split}
\limsup_{t\to 1}&\,u(\gamma(t))(1-t)\leq \limsup_{t\to
1}\Omega^{\alpha_p}_{D',p}(\gamma(t))(1-t)=\\ & \limsup_{t\to
1}\rho^{-1} \Omega_{D'', F(p)}(F\circ \gamma(t))(1-t)=
-\frac{2}{\rho}\Re[\la dF_p(\gamma'(1)),\nu_{F(p)}\ra^{-1}]=\\
&
-\frac{2}{\rho}\Re[\theta_{\kappa_{F(p)}}(dF_p(\gamma'(1))^{-1}]
=-2\Re[\theta_{\alpha_p}(\gamma'(1))^{-1}].
\end{split}
\end{equation*}
Thus $u\in \mathcal S_{\alpha_p}(D)$.

Conversely, if $u\in \mathcal S_{\alpha_p}(D)$ then clearly
$u|_{D'}\in \mathcal S_{\alpha_p}(D')$. But then $u|_{D'}\leq
\Omega^{\alpha_p}_{D',p}$ by  \cite[Theorem 5.1]{BPT}. Hence $u
\in \So_{D',\alpha_p}(D)$.
\end{proof}

As a consequence  we have the following proposition:

\begin{proposition}\label{prop-Omega}
Let $M$ be a Stein manifold. Let $D\subset\subset M$ be a
strongly pseudoconvex domain with smooth boundary. Let $p\in \de D$ and
let $\alpha_p\in \mathcal H_p(\de D)$. Then
\begin{enumerate}
  \item $\Omega^{\alpha_p}_{D,p}$ is upper semicontinuous and belongs to $\mathcal S_{\alpha_p}(D)$,
  \item $\lim_{z\to q}\Omega^{\alpha_p}_{D,p}(z)=0$ for all
  $q\in \de D\setminus\{p\}$,
  \item $\lim_{t\to 1} \Omega^{\alpha_p}_{D,p}(\gamma(t))(1-t)=-2 \Re
  [\theta_{\alpha_p}(\gamma'(1))]^{-1}$ for all $\gamma\in
  \Gamma_p$,
  \item $\Omega^{\alpha_p}_{D,p}\in {L}^\infty_{\sf{loc}}(D)$,
  \item $\Omega^{\alpha_p}_{D,p}$ is a maximal plurisubharmonic function in $D$, hence
  $(dd^c \Omega^{\alpha_p}_{D,p})^n\equiv 0$ in $D$.
\end{enumerate}
\end{proposition}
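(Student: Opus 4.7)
My plan is to prove the five assertions in the order (4), (1), (2)--(3), (5), with Lemma~\ref{palla-equiv} serving as the key device throughout: it allows one to transfer estimates near $p$ from the already-understood strongly convex case to the strongly pseudoconvex case.

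Fix once and for all a smooth, strongly pseudoconvex subdomain $D'\subset D$ biholomorphic to a bounded strongly convex domain and tangent to $\de D$ at $p$ (obtained by osculating from inside with a small convex piece in local coordinates at $p$). By Lemma~\ref{esiste limitato} the family $\mathcal{S}_{\alpha_p}(D)$ is non-empty and contains, for every chosen $q\in\de D\setminus\{p\}$, a continuous function $u_q$ with the curve asymptotics of (3) and the boundary value $u_q(q)=0$. By Lemma~\ref{palla-equiv} every $u\in\mathcal{S}_{\alpha_p}(D)$ satisfies $u\leq\Omega^{\alpha_p}_{D',p}$ on $D'$ and $u\leq 0$ on $D$; since the convex-case kernel $\Omega^{\alpha_p}_{D',p}$ is locally bounded on $D'$ (see \cite{B-P,BPT}), the upper envelope $\Omega:=\Omega^{\alpha_p}_{D,p}$ is locally bounded above on $D$, while $\Omega\geq u_q$ supplies the local lower bound; this proves (4).

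Let $\Omega^{\ast}$ denote the upper semicontinuous regularization of $\Omega$; by standard pluripotential theory $\Omega^{\ast}\in\ps(D)$ and $\Omega\leq\Omega^{\ast}$. To obtain (1), I would verify $\Omega^{\ast}\in\mathcal{S}_{\alpha_p}(D)$, which would force $\Omega^{\ast}\leq\Omega$ by the defining supremum and hence $\Omega=\Omega^{\ast}$. Strict negativity $\Omega^{\ast}<0$ follows from $\Omega^{\ast}\leq 0$ combined with the maximum principle, since $\Omega^{\ast}\geq u_q$ tends to $-\infty$ along any $\gamma\in\Gamma_p$, preventing $\Omega^{\ast}\equiv 0$. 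For the limsup condition, $\Omega\leq\Omega^{\alpha_p}_{D',p}$ on $D'$ together with continuity of $\Omega^{\alpha_p}_{D',p}$ there gives $\Omega^{\ast}\leq\Omega^{\alpha_p}_{D',p}$ on $D'$; since any $\gamma\in\Gamma_p$ eventually enters $D'$, the computation already carried out in the proof of Lemma~\ref{palla-equiv} yields
\[
\limsup_{t\to 1}\Omega^{\ast}(\gamma(t))(1-t)\leq\lim_{t\to 1}\Omega^{\alpha_p}_{D',p}(\gamma(t))(1-t)=-2\Re[\theta_{\alpha_p}(\gamma'(1))^{-1}].
\]
For (2) and (3), the upper bounds are immediate (from $\Omega\leq 0$ and from $\Omega\in\mathcal{S}_{\alpha_p}(D)$); the matching lower bounds come from $\Omega\geq u_q$, using $\lim_{z\to q}u_q(z)=0$ for (2), and, for (3), multiplying the inequality $\Omega\geq u_q$ by the positive factor $1-t$ and passing to the limit along $\gamma$.

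Finally, for (5) I would use the standard $\max$-gluing argument: given $B\Subset D$ open and $v$ plurisubharmonic on $B$, upper semicontinuous on $\overline{B}$, with $v\leq\Omega$ on $\de B$, the plurisubharmonic maximum principle forces $v<0$ in $B$; setting $\tilde u:=\max\{v,\Omega\}$ on $B$ and $\tilde u:=\Omega$ on $D\setminus B$ produces a plurisubharmonic function on $D$, still negative, with the same curve-limsup condition at $p$ as $\Omega$ (since $p\notin\overline{B}$). Hence $\tilde u\in\mathcal{S}_{\alpha_p}(D)$, $\tilde u\leq\Omega$, and so $v\leq\Omega$ on $B$; this is maximality, and $(dd^c\Omega)^n\equiv 0$ then follows from the Bedford-Taylor characterization of locally bounded maximal plurisubharmonic functions. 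The hard step, in my view, is verifying that upper semicontinuous regularization preserves the delicate curve-asymptotic at $p$; this is exactly what Lemma~\ref{palla-equiv} was set up to handle, by supplying a continuous majorant with the correct asymptotic coming from the convex-case Poisson kernel.
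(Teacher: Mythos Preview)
Your approach is correct and essentially the same as the paper's: bound the upper semicontinuous regularization $\Omega^\ast$ above by the continuous convex-case kernel $\Omega^{\alpha_p}_{D',p}$ on $D'$ via Lemma~\ref{palla-equiv}, conclude $\Omega^\ast\in\mathcal{S}_{\alpha_p}(D)$ (the paper phrases this as $\Omega^\ast\in\So_{D',\alpha_p}(D)$, but by Lemma~\ref{palla-equiv} this is the same), use the barriers $u_q$ from Lemma~\ref{esiste limitato} for the lower bounds in (2)--(4), and apply the standard $\max$-gluing for (5).

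One logical slip: your argument that $\Omega^\ast\not\equiv 0$ ``since $\Omega^\ast\geq u_q$ tends to $-\infty$'' is backwards---a lower bound going to $-\infty$ places no constraint on $\Omega^\ast$ from above and does not rule out $\Omega^\ast\equiv 0$. The correct reason is already in your next sentence: $\Omega^\ast\leq\Omega^{\alpha_p}_{D',p}<0$ on $D'$, so $\Omega^\ast$ is strictly negative there, and then the maximum principle gives $\Omega^\ast<0$ on all of $D$. Just reorder those two steps.
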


\begin{proof}
(1) Let $v$ be the upper semicontinuous regularization of
$\Omega^{\alpha_p}_{D,p}$. Let $B\subset D$ be a domain
biholomorphic to a ball in $\C^n$ which is tangent to $\de D$
at $p$.  By Lemma \ref{palla-equiv},
$\Omega^{\alpha_p}_{D,p}|_{B}\leq \Omega^{\alpha_p}_{B,p}$.
Since $\Omega^{\alpha_p}_{B,p}$ is smooth, $v|_B\leq
\Omega^{\alpha_p}_{B,p}$ as well. Hence $v\in
\So_{B,\alpha_p}(D)$, hence $v\leq \Omega^{\alpha_p}_{D,p}$,
again by Lemma \ref{palla-equiv}. This proves that
$\Omega^{\alpha_p}_{D,p}$ is upper semicontinuous, hence
plurisubharmonic, and then belongs to $\mathcal
S_{\alpha_p}(D)$.

(2), (3) and (4) Let $q\in \de D\setminus\{p\}$. Let $u_q\in
\mathcal S_{\alpha_p}(D)$ be given by Lemma \ref{esiste
limitato}. Then
\[
u_q\leq \Omega^{\alpha_p}_{D,p}\leq 0,
\]
which proves that $\Omega^{\alpha_p}_{D,p}$ is locally bounded
in $D$. Moreover, since $u_q(z)\to 0$ as $z\to q$, $\lim_{z\to
q}\Omega^{\alpha_p}_{D,p}(z)=0$. Also, (3) follows from Lemma
\ref{esiste limitato}.(3) and $\Omega^{\alpha_p}_{D,p}\in
\mathcal S_{\alpha_p}(D)$.

(5) Let $U$ be a relatively compact open subset of $D$. Let $u$
be an upper semicontinuous function on $\overline{U}$ which is
plurisubharmonic in $U$ and such that $u\leq
\Omega^{\alpha_p}_{D,p}$ on $\de U$. By the maximum principle,
$u<0$ in $U$. Let
\[
\tu(z):=\begin{cases}
 \max\{u(z), \Omega^{\alpha_p}_{D,p}(z)\} & \hbox{if}\
z\in U\\
\Omega^{\alpha_p}_{D,p}(z) & \hbox{if}\ z\in
D\setminus U
\end{cases}
\]
Then $\tu\in \mathcal S_{\alpha_p}(D)$. Hence $\tu\leq
\Omega^{\alpha_p}_{D,p}$. Therefore $u\leq
\Omega^{\alpha_p}_{D,p}$ in $U$, showing that
$\Omega^{\alpha_p}_{D,p}$ is maximal. By \cite{B-T1},
\cite{B-T2} it follows $(dd^c \Omega^{\alpha_p}_{D,p})^n\equiv
0$.
\end{proof}

Lemma \ref{palla-equiv} holds in general for any couple of
strongly pseudoconvex domains:

\begin{proposition}\label{sotto-colle}
Let $M$ be a Stein manifold. Let $D\subset\subset M$ be a
strongly pseudoconvex domain. Let $p\in \de D$ and
let $\alpha_p\in \mathcal H_p(\de D)$. Let $D'\subset D$ be a
strongly pseudoconvex domain with smooth boundary tangent to
$D$ at $p$. Then $\mathcal
S_{\alpha_p}(D)=\So_{D',\alpha_p}(D)$.
\end{proposition}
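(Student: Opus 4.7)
The plan is to prove both inclusions $\mathcal S_{\alpha_p}(D) \subseteq \So_{D',\alpha_p}(D)$ and the reverse. The geometric observation underlying everything is that tangency of $D'\subset D$ at $p$ forces $T_p\de D' = T_p\de D$, hence $T_p^\C\de D' = T_p^\C\de D$, a common outward normal direction, and therefore a common defining couple $(\alpha_p, \theta_{\alpha_p})$ and a common family of admissible curves: a smooth curve $\gamma:[0,1]\to D'\cup\{p\}$ with $\gamma(1)=p$ and $\gamma'(1)\not\in T_p\de D'$ automatically lies in $\Gamma_p$ (for $D$), and conversely any $\gamma\in\Gamma_p$ (for $D$) eventually enters $D'$ by the argument below.

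For the forward inclusion, given $u \in \mathcal S_{\alpha_p}(D)$, the restriction $u|_{D'}$ is plurisubharmonic, negative, and inherits the limsup condition for every admissible curve in $D'\cup\{p\}$, since every such curve also lies in $\Gamma_p$ (for $D$). Hence $u|_{D'}\in \mathcal S_{\alpha_p}(D')$, and by the definition of $\Omega^{\alpha_p}_{D',p}$ as the supremum of that family, $u|_{D'} \leq \Omega^{\alpha_p}_{D',p}$, placing $u$ in $\So_{D',\alpha_p}(D)$.

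The reverse inclusion is the substantive half. Given $u \in \So_{D',\alpha_p}(D)$ and $\gamma \in \Gamma_p$, I first check that $\gamma$ eventually lies in $D'$. Using a smooth defining function $\rho$ for $D'$ near $p$, Taylor expansion gives $\rho(\gamma(t)) = (t-1)\,d\rho_p(\gamma'(1)) + O((t-1)^2)$; since $T_p\de D' = T_p\de D$ and $\gamma'(1)$ points inward to $D$ (hence to $D'$, by the common inward normal direction), one has $d\rho_p(\gamma'(1)) > 0$, so $\rho(\gamma(t))<0$ for $t$ close to $1$. Choose $t_0 < 1$ with $\gamma([t_0,1)) \subset D'$ and reparametrize $\tilde\gamma(s) := \gamma(t_0 + (1-t_0)s)$; this is an admissible curve in $D'\cup\{p\}$ with $\tilde\gamma'(1) = (1-t_0)\gamma'(1)$. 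Using $u \leq \Omega^{\alpha_p}_{D',p}$ on $D'$ and $1-t_0 > 0$,
\begin{equation*}
\limsup_{t\to 1} u(\gamma(t))(1-t) = (1-t_0)\limsup_{s\to 1} u(\tilde\gamma(s))(1-s) \leq (1-t_0)\lim_{s\to 1} \Omega^{\alpha_p}_{D',p}(\tilde\gamma(s))(1-s),
\end{equation*}
and Proposition~\ref{prop-Omega}(3) applied to $D'$ evaluates the final limit as $-2\Re[\theta_{\alpha_p}(\tilde\gamma'(1))^{-1}]$. Since $\theta_{\alpha_p}$ is $\C$-linear and $(1-t_0)>0$ is real, the scaling $\tilde\gamma'(1) = (1-t_0)\gamma'(1)$ cancels, yielding the required bound $-2\Re[\theta_{\alpha_p}(\gamma'(1))^{-1}]$.

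The main obstacle is the bookkeeping in the reparametrization: one must verify that the inward-pointing condition transfers cleanly from $\de D$ to $\de D'$, and then track the $(1-t_0)$ factor so it cancels correctly against $\theta_{\alpha_p}(\tilde\gamma'(1))^{-1} = (1-t_0)^{-1}\theta_{\alpha_p}(\gamma'(1))^{-1}$. Both are elementary once the tangency geometry is straight. The substantive input from earlier in the paper is Proposition~\ref{prop-Omega}(3) for the subdomain $D'$, whose proof relies on Lemma~\ref{palla-equiv} (comparison with a ball).
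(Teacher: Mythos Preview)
Your proof is correct and follows essentially the same approach as the paper, which simply states that the argument of Lemma~\ref{palla-equiv} goes through verbatim once equation~\eqref{convo} is replaced by Proposition~\ref{prop-Omega}(3). Your explicit reparametrization $\tilde\gamma(s)=\gamma(t_0+(1-t_0)s)$ and the tracking of the $(1-t_0)$ factor make precise a detail the paper leaves implicit (there one just observes that the limit at $t=1$ depends only on the tail of $\gamma$), but the substance is identical.
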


\begin{proof}
The proof goes exactly as in Lemma \ref{palla-equiv}, replacing
\eqref{convo} with  Proposition \ref{prop-Omega}.(3).
\end{proof}

\section{Continuity}\label{continuity}

\begin{lemma}\label{approssima}
Let $M$ be a Stein manifold. Let $D\subset\subset M$ be a
strongly pseudoconvex domain with smooth boundary. Let $u$ be upper
semicontinuous on $\overline{D}$ such that $u \in \ps(D)$. Then
there exists a sequence $\{u_j\}$ of continuous   functions on
$\overline{D}$ with range in $(-\infty,+\infty)$ such that
$u_j\in \ps(D)$ and $u_j\searrow u$ pointwise on $\overline{D}$.
\end{lemma}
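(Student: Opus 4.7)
The approach is to approximate $u$ from above by continuous functions and then project onto plurisubharmonic functions by taking an upper envelope; the main effort is to verify the continuity of these envelopes on $\overline{D}$, which is where strong pseudoconvexity enters.

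First, since $u$ is upper semicontinuous on the compact set $\overline{D}$, there is a decreasing sequence of continuous bounded functions $f_j\colon\overline{D}\to\R$ with $f_j\searrow u$ pointwise; a concrete choice is the inf-convolution
\[
f_j(x)=\sup_{y\in\overline{D}}\bigl(\max(u(y),-j)-j\,\dist(x,y)\bigr),
\]
which is $j$-Lipschitz, satisfies $-j\le f_j\le\sup_{\overline{D}}u$, and decreases to $u$. For each $j$ I would then form the envelope
\[
u_j(z):=\bigl(\sup\bigl\{v(z)\,:\,v\in\ps(D),\ v^*\le f_j\ \text{on}\ \overline{D}\bigr\}\bigr)^*,
\]
where $v^*$ denotes the upper semicontinuous regularization on $\overline{D}$ and the outer $*$ is again a usc regularization. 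By Choquet's lemma, $u_j$ is plurisubharmonic on $D$, and since $f_j$ is continuous one has $u_j\le f_j$ on $\overline{D}$. Since $u$ itself is usc, psh, and bounded above by $f_j$, it belongs to the defining family, so $u\le u_j\le f_j$; the sequence $\{u_j\}$ is automatically decreasing because a candidate for $u_{j+1}$ is also a candidate for $u_j$.

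The central technical point is to prove that each $u_j$ is continuous on $\overline{D}$. In the open set $\{u_j<f_j\}\subset D$ the envelope is a \emph{maximal} locally bounded psh function (otherwise one could push it up locally while staying under $f_j$), hence $(dd^c u_j)^n=0$ and the Bedford--Taylor regularity theory gives continuity there. At a boundary point $p\in\partial D$, continuity amounts to $\lim_{z\to p}u_j(z)=f_j(p)$. To produce the lower bound $\liminf u_j(z)\ge f_j(p)$, one uses the holomorphic peak function $h$ for $\overline{D}$ at $p$ provided by strong pseudoconvexity (exactly as in the proof of Lemma~\ref{esiste limitato}) and constructs the test psh function
\[
v(z)=f_j(p)-\varepsilon+\delta\log|h(z)|,
\]
choosing $\delta,\varepsilon>0$ so that $v\le f_j$ on $\overline{D}$: near $p$ this is easy by continuity of $f_j$ together with $h(p)=1$, and globally it is enforced by taking $\delta$ large, using the uniform bound $\log|h|\le -c<0$ away from $p$ to absorb the gap between $f_j(p)$ and $\inf f_j$. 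Evaluating at $p$ gives $u_j(p)\ge f_j(p)-\varepsilon$, and letting $\varepsilon\to 0$ yields $u_j(p)=f_j(p)$, while the same barrier applied at nearby points gives continuity of $u_j$ up to $p$.

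With continuity in hand, the sandwich $u\le u_j\le f_j\searrow u$ forces $u_j\searrow u$ pointwise, and continuity on the compact set $\overline{D}$ automatically makes each $u_j$ finite-valued, which is the desired conclusion. The step I expect to be the main obstacle is the boundary continuity in the preceding paragraph: interior regularity is a fairly routine consequence of Bedford--Taylor, but pinning the envelope to the prescribed continuous values on $\partial D$ requires a genuinely new input, namely the holomorphic peak functions furnished by strong pseudoconvexity, which convert the continuity of $f_j$ into a matching continuity for the plurisubharmonic envelope.
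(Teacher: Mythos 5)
Your approach is genuinely different from the paper's. The paper's proof is a two-line reduction: by Forn{\ae}ss' embedding theorem, $\overline{D}$ embeds holomorphically into a bounded strongly convex domain $\overline{C}\subset\C^N$ in such a way that $G(D)$ becomes an analytic subvariety of the $B$-regular domain $C$; Wikstr\"om's theorem [Wk, Theorem~2.3] then provides the decreasing approximation by continuous functions on $G(\overline{D})$, and pulling back and truncating gives the claim. You instead produce the $f_j$ by inf-convolution and then take plurisubharmonic envelopes, which is a self-contained ``obstacle-problem'' construction.

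There is, however, a real gap in your interior continuity step. You assert that on $\{u_j<f_j\}$ the envelope $u_j$ is maximal, hence $(dd^c u_j)^n=0$, ``and the Bedford--Taylor regularity theory gives continuity there.'' That last implication is false: Bedford--Taylor prove continuity of the \emph{solution of the Dirichlet problem}, not that every locally bounded maximal plurisubharmonic function is continuous. Maximal locally bounded psh functions can very well be discontinuous — for instance, on the bidisc take $v(z_1,z_2)=h(z_1)$ for any bounded, discontinuous subharmonic $h$ on $\D$; then $v$ is psh, locally bounded, and $(dd^c v)^2=0$, yet $v$ is not continuous. So maximality alone gives you nothing. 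What is actually needed is a Walsh-type theorem (``continuity of the Perron--Bremermann/obstacle envelope on domains with a barrier''), and proving it here would require either an argument propagating boundary continuity to the interior (the translation trick used in the convex case, which does not transplant to a Stein manifold), or the Jensen-measure/Edwards' theorem machinery used later in Proposition~\ref{continuitygenerale} — which would be circular, because that very proof invokes Lemma~\ref{approssima}. This is precisely the difficulty that the paper sidesteps by reducing, via Forn{\ae}ss embedding, to Wikstr\"om's theorem for analytic varieties in a $B$-regular domain in $\C^N$. Your boundary-barrier argument and the monotonicity/sandwich bookkeeping are fine; it is the interior continuity of the envelope that is not established.
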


\begin{proof}
By Forn{\ae}ss embedding theorem \cite[Theorem 9]{Fo} there
exists a holomorphic embedding $G$ of $\overline{D}$ into a
strongly convex domain $\overline{C}$ such that $G(D)\subset
C$. Hence $G(D)$ is an analytic variety in the $B$-regular
domain $C$, and we can apply Wikstr\"om's theorem \cite[Theorem
2.3]{Wk}. Therefore we can find a sequence $\tilde{u}_j$ of
continuous functions on $G(\overline{D})$ which are
plurisubharmonic on $G(D)$ and $\tilde{u}_j\searrow
(G^{-1})^\ast u$ pointwise on $G(\overline{D})$. Setting
$u_j:=\max\{G^{\ast} \tilde{u}_j,-j\}$ we get the claim.
\end{proof}

\begin{proposition}\label{continuitygenerale}
Let $M$ be a Stein manifold of dimension $n$. Let
$D\subset\subset M$ be a strongly pseudoconvex domain with
smooth boundary. Let $p\in \de D$. Let $p\in \de D$ and let $\alpha_p\in
\mathcal H_p(\de D)$. Then $\Omega^{\alpha_p}_{D,p}$ is
continuous on $\overline{D}\setminus \{p\}$.
\end{proposition}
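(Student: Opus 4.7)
The plan is to split the question of continuity of $\Omega:=\Omega^{\alpha_p}_{D,p}$ on $\overline{D}\setminus\{p\}$ into boundary and interior parts. At a boundary point $q\in\partial D\setminus\{p\}$, Proposition~\ref{prop-Omega}.(1) gives upper semicontinuity and Proposition~\ref{prop-Omega}.(2) gives $\lim_{z\to q}\Omega(z)=0=\Omega(q)$, so continuity there is free. All the work is in establishing lower semicontinuity at interior points $z_0\in D$.

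My strategy is to use a Jensen-measure duality together with the decreasing approximation supplied by Lemma~\ref{approssima}. I first extend $\Omega$ to $\overline{D}$ by setting $\Omega\equiv 0$ on $\partial D$; this is compatible with upper semicontinuity since $\Omega\leq 0$ on $D$ and $\Omega(z)\to 0$ as $z\to q\neq p$. Lemma~\ref{approssima} then produces a decreasing sequence $w_j\in\ps(D)\cap C(\overline{D})$ with $w_j\searrow\Omega$ pointwise. For $z_0\in D$, let $\mathcal{J}_{z_0}$ denote the Jensen measures at $z_0$ relative to $\ps(D)\cap C(\overline{D})$. Applying Jensen's inequality to each $w_j$ and passing to the monotone limit yields
$$\Omega(z_0)\leq \int_{\overline{D}}\Omega\,d\mu,\qquad \mu\in\mathcal{J}_{z_0}.$$

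The heart of the argument is to upgrade this to an equality and to interpret the right-hand side as a continuous envelope. I would invoke an Edwards-type duality theorem, valid in the $B$-regular setting available via the Forn{\ae}ss embedding already used in the proof of Lemma~\ref{approssima}, in the form
$$\inf_{\mu\in\mathcal{J}_{z_0}}\int_{\overline{D}}\Omega\,d\mu=\sup\bigl\{v(z_0):v\in\ps(D)\cap C(\overline{D}),\ v\leq \Omega\text{ on }\overline{D}\bigr\}.$$
The maximality of $\Omega$ (Proposition~\ref{prop-Omega}.(5)) is what should force the supremum on the right to reach $\Omega(z_0)$: any candidate $v$ nearly saturating the inequality can be glued with $\Omega$ outside a small neighbourhood of $z_0$ to produce an element of $\mathcal{S}_{\alpha_p}(D)$, and maximality of $\Omega$ in this family delivers $v(z_0)\leq\Omega(z_0)$. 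Once the identity is in hand, $\Omega$ is realised as an upward-directed supremum of continuous functions (directness from the pairwise $\max$), hence lower semicontinuous, which combined with Proposition~\ref{prop-Omega}.(1) concludes the proof.

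The main obstacle is that $\Omega$ is unbounded below along curves in $\Gamma_p$, so the class of continuous plurisubharmonic minorants on all of $\overline{D}$ is degenerate near $p$ and the standard form of Edwards' theorem does not apply directly. I expect to bypass this by working with the truncations $\Omega_N:=\max(\Omega,-N)$, which are bounded upper semicontinuous on $\overline{D}$, running the duality for each $N$, and then taking $N\to+\infty$: the local boundedness of $\Omega$ away from $p$ supplied by Proposition~\ref{prop-Omega}.(4) together with the blow-up condition built into $\mathcal{S}_{\alpha_p}(D)$ localises the problematic behaviour to shrinking neighbourhoods of $p$, so lower semicontinuity on $D$ emerges cleanly in the limit.
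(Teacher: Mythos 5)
Your framework is the right one: the paper also establishes upper semicontinuity for free, reduces to lower semicontinuity at interior points, and invokes Jensen measures plus Edwards' duality after transplanting into a $B$-regular setting via Forn{\ae}ss embedding, using Lemma~\ref{approssima} to identify the Jensen-measure classes for the continuous and merely upper-semicontinuous cones. But you stop exactly where the real difficulty begins, and your proposed workaround does not close it.

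The issue is twofold. First, the step where you claim to conclude: you assert that maximality of $\Omega$ ``should force the supremum on the right to reach $\Omega(z_0)$,'' justified by a gluing argument which shows $v(z_0)\leq\Omega(z_0)$. That is the trivial direction; the inequality is automatic from $v\leq\Omega$. What is needed is the reverse, i.e. that the supremum of continuous plurisubharmonic minorants actually attains $\Omega(z_0)$, and maximality plus gluing give you nothing there. Second, the truncation device $\Omega_N=\max(\Omega,-N)$ does not resolve the unboundedness problem in any useful way: Edwards' theorem applied to $g=\Omega_N$ gives a duality between $\inf_\mu\int\Omega_N\,d\mu$ and the continuous envelope $S^{\mathcal F^c}(\Omega_N)$, but there is no reason for $S^{\mathcal F^c}(\Omega_N)$ to equal $\Omega_N(z_0)$, and $\Omega_N$ is not maximal, so the machinery of Proposition~\ref{prop-Omega}(5) is unavailable. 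Letting $N\to\infty$ only recovers $\Omega$ if you already know the envelopes agree with the truncations, which is the very thing in question.

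What is missing is the paper's key device: pick the analytic hypersurface $H=\{h=0\}$ tangent to $\partial D$ at $p$ (missing the chosen interior point $a$), and work with the \emph{weighted} cones $\mathcal F=|h|^2\ps^{ba}(D)$ and $\mathcal F^c=|h|^2(\ps(D)\cap C(\overline D))$ and the bounded continuous function $g=|h|^2\tilde g$, where $\tilde g$ is the ball Poisson kernel extended by zero. The factor $|h|^2$ annihilates the pole of the kernel at $p$ (indeed $|h|^2\Omega_{\B',p}=-(1-\|w\|^2)\circ F$ on $\B'$ is bounded and continuous up to $\overline D$), which is precisely what makes $g$ admissible for Edwards' theorem; moreover, Lemma~\ref{palla-equiv} identifies $\{u\in\mathcal F:u\leq g\}$ with $|h|^2\So_{\B',\alpha_p}(D)=|h|^2\mathcal S_{\alpha_p}(D)$, so that $S^{\mathcal F}(g)=|h|^2\Omega^{\alpha_p}_{D,p}$ \emph{by the definition of $\Omega^{\alpha_p}_{D,p}$ as a supremum}, not by any maximality argument. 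Then $S^{\mathcal F^c}(g)=S^{\mathcal F}(g)$ by the equality of Jensen classes, and $S^{\mathcal F^c}(g)$ is a supremum of continuous functions, hence lower semicontinuous; dividing by $|h|^2$ and using $h(a)\neq0$ finishes. Without the $|h|^2$ weight there is no clean link between the Edwards envelope and the extremal family $\mathcal S_{\alpha_p}(D)$, and your sketch as written leaves that link unestablished.
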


\begin{proof}
By Proposition \ref{prop-Omega}, $\Omega^{\alpha_p}_{D,p}$
belongs to the family $\mathcal S_{\alpha_p}(D)$, hence it is
upper semicontinuous.

In order to show lower-semicontinuity, we use a variation of
the method of {\sl Jensen measures} and Edwards' theorem (see
\cite{Wk}, \cite{Wk2}).

Let $a\in D$. We shall prove that $\Omega^{\alpha_p}_{D,p}$ is
continuous in $a$.

Since $M$ is Stein, there exists a holomorphic embedding $G:M\to \C^N$, for some $N\geq n$.  Let $T:=dG_p(T_pM)$ and choose a complementary $(N-n)$-dimensional complex space $L\subset \C^N$ such that the projection $\pi$ of $\C^N$ onto $T$ along $L$ does not map $G(a)$ in $dG_p(T_p(\partial D))$. By construction, there exists an open neighborhood $U$ of $p$ in $M$ such that $F:=\pi\circ G$ is a biholomorphism on $U$. Therefore, $F(\partial D)$ is strongly pseudoconvex at $F(p)$ and there is a ball $B$ in $F(U)$ that is tangent to $F(\partial D)$ at $F(p)$ and does not intersect $dF_p(T_p\partial D)$. 

We can choose coordinates $w=(w_1,\ldots, w_n)$ on $T$ (so that $T$ is naturally identified with $\C^n$) such that $F(p)=e_1=(1,0\ldots, 0)$, $dF_p(T_p\partial D)=\{w:  \Re(w_1-1)=0\}$,  and $B=\B^n$.

Let $\B':=(F|_U)^{-1}(\B^n)$. Let $h(z):=F_1(z)-1$ (here $F_1:=w_1\circ F$) and let $H:=\{z\in M: h(z)=0\}$.  Note that
$H$ is an analytic hypersurface in $M$, that $D\setminus H$ is
dense in $D$,  $a\not\in H$, $\B'\cap H=\emptyset$ and that $H$ is non singular at
$p$ and tangent to $\de D$ at $p$.

The Poisson kernel of $\B^n$ at
$e_1$ associated to the standard Hermitian metric of $\C^n$ is
given by
\[
\Omega_{\B^n,e_1}(w)=-\frac{1-\|w\|^2}{|1-w_1|^2}.
\]
Multiplying $\Omega_{\B^n,e_1}$ by a positive constant if
necessary, we may assume that $F^\ast(\Omega_{\B^n,
e_1})=\Omega^{\alpha_p}_{\B', p}$.

Let $\mathcal P^{ba}(D)$ be the cone of all bounded from above plurisubharmonic functions in $D$ and let $\mathcal P^c(\overline{D})$ be the cone of all coontinuous function in $\overline{D}$ which are plurisubharmonic in $D$. We let $\mathcal F:=|h|^2\mathcal P^{ba}(D)$ and $\mathcal F^c:=|h|^2\mathcal P^c(\overline{D})$. 

Since $|h|^2$ is bounded on $\overline{D}$, the functions in $\mathcal F$ and $\mathcal F^c$ are bounded from above. Therefore, the functions in $\mathcal F$ can be extended on $\partial D$ as upper semicontinuous functions.

 Let
\[
\tilde{g}(z)=
\begin{cases}
\Omega^{\alpha_p}_{\B', p}(z) & z\in \B'\\
0 & z\not\in \B'
\end{cases}
\]
and set $g(z):=\tilde{g}(z)|h(z)|^2$.

Note that for $z\in \B'$,
\begin{equation*}
\begin{split}
g(z)&=\Omega_{\B',p}(z)|
F_1(z)-1|^2=(\Omega_{\B^n,e_1}(w)\cdot  |w_1-1|^2)\circ
F(z)\\&=-(1-\|w\|^2)\circ F(z),
\end{split}
\end{equation*}
and then the function $g$ is $\leq 0$ and continuous on
$\overline{D}$.

Let
\[
S^{\mathcal F}(g)(z):=\sup_{u\in {\mathcal F}, u\leq g} u(z),
\]
and similarly we define $S^{\mathcal F^c}(g)(z)$.

For a point $z\in \overline{D}$ we let
\[
\mathcal I^z_{\mathcal F}:=\left\{\nu_z \ \hbox{positive finite
Borel measure on}\ \overline{D}: u(z)\leq \int_{\overline{D}} u
d\nu_z \quad\forall u\in \mathcal F\right\}
\]
and similarly define
\[
\mathcal I^z_{\mathcal F^c}:=\left\{\nu_z \ \hbox{positive finite
Borel measure on}\ \overline{D}: u(z)\leq \int_{\overline{D}} u
d\nu_z \quad\forall u\in \mathcal F^c\right\}.
\]

Since $\mathcal F^c\subseteq \mathcal F$, it follows
immediately $\mathcal I_{\mathcal F}^z\subseteq \mathcal
I_{\mathcal F^c}^z$.

We claim that actually $\mathcal I_{\mathcal F}^z= \mathcal
I_{\mathcal F^c}^z$. In order to prove such equality, let
$\nu_z\in \mathcal I^z_{\mathcal F^c}$ and let $v\in \mathcal
F$. Let $\tilde{v}$ be the upper semicontinuous extension to
$\overline{D}$ of $\overline{D}\setminus H \ni z\mapsto
v(z)/|h(z)|^2$. By definition  $\tilde{v}\in \ps(D)\cap
\usc(\overline{D})$. By Lemma \ref{approssima} there exists a
sequence $\{\tilde{u}_j\}\subset \ps(D)$ of continuous functions 
on $\overline{D}$  such that $\tilde{u}_j\searrow \tilde{v}$
pointwise on $\overline{D}$.   Set $u_j(z):=\tilde{u}_j(z)
|h(z)|^2$ for $z\in \overline{D}$. Since $\tilde{u}_j$ is 
bounded from below, then $u_j|_H\equiv 0$. By construction $u_j \in
\mathcal{F}^c$ and $u_j\searrow v$ pointwise on $\overline{D}$.
Hence
\[
\int_{\overline{D}} v d\nu_z =\lim_{j\to \infty}
\int_{\overline{D}} u_j d\nu_z\geq \lim_{j\to \infty} u_j(z)
\geq v(z),
\]
and therefore $\nu_z\in \mathcal I^z_{\mathcal F}$, proving the
equality.

Now, we let
\[
I^{\mathcal F}(g)(z):=\inf \left\{\int_{\overline{D}} g d\nu_z :
\nu_z\in {\mathcal I}^z_{\mathcal F}\right\},
\]
and similarly we define
\[
I^{\mathcal F^c}(g)(z):=\inf \left\{\int_{\overline{D}} g d\nu_z :
\nu_z\in {\mathcal I}^z_{\mathcal F^c}\right\}.
\]
Since $\mathcal I_{\mathcal F}^z= \mathcal I_{\mathcal F^c}^z$
it follows $I^{\mathcal F}(g)=I^{\mathcal F^c}(g)$.

Now, both $\mathcal F$ and $\mathcal F^c$ are cones of upper
bounded, upper semicontinuous functions, thus by Edwards'
theorem (see \cite[Theorem 2.1]{Wk2} and its proof at p. 183)
$S^{\mathcal F}(g)=I^{\mathcal F}(g)$ and $S^{\mathcal
F^c}(g)=I^{\mathcal F^c}(g)$.

Hence $S^{\mathcal F^c}(g)=S^{\mathcal F}(g)$ which implies
that the function $S^{\mathcal F}(g)$ is the supremum of
continuous functions (with, a priori, range in $[-\infty, 0])$)
and therefore it is lower semicontinuous.

Let $u\in \mathcal F$ be such that $u\leq g$. Write $u=|h|^2\tu$ for some $\tu\in \mathcal P^{ba}(D)$. Then $\tilde u\leq |h|^{-2} g$ on $D$. Since $H\cap \B'=\emptyset$, and $g\equiv 0$ outside $\B'$, it follows that $\tu \leq \Omega^{\alpha_p}_{\B',p}$ in $\B'$ and  $\tu\leq 0$ in $D$ and, actually, by the maximum principle, $\tu<0$ in $D$. In other words, $\tu\in
\So_{\B',\alpha_p}(D)$. Conversely,  if $\tu \in \So_{\B',\alpha_p}(D)$ then $u:=|h|^2\tu \in \mathcal F$ and $u\leq g$ in $D$. Therefore, 
\[
|h|^2\So_{\B',\alpha_p}(D)=\{u \in \mathcal F : u\leq g\}.
\]
By Lemma \ref{palla-equiv}, we have
\[
S^{\mathcal F^c}(g)=S^{\mathcal F}(g)=\sup_{u\in \mathcal F, u\leq g}u=|h|^{-2}\sup_{\tu \in \So_{\B',\alpha_p}(D)} \tu=|h|^{-2}\sup_{\tu \in \mathcal S_{\alpha_p}(D)} \tu=|h|^{-2}\Omega^{\alpha_p}_{D,p}.
\]
Since $h(a)\neq 0$, the previous equality implies that $\Omega^{\alpha_p}_{D,p}$ is lower semicontinuous at $a$. But $\Omega^{\alpha_p}_{D,p}$ is
upper semicontinuous at $a$ by Proposition \ref{prop-Omega},
hence it is continuous at $a$. 

By the arbitrariness of $a\in
D$, $\Omega^{\alpha_p}_{D,p}$ is continuous in $D$ and, by Proposition \ref{prop-Omega}(2), it extends continuously as $0$ to $\partial D\setminus\{p\}$. Hence $\Omega^{\alpha_p}_{D,p}$ is continuous in $\overline{D}\setminus\{p\}$.
\end{proof}

\section{Local behavior of the pluricomplex Poisson kernel at the pole}\label{Trap}

\subsection{Lempert's theory} Now we need to briefly recall Lempert's theory \cite{Le1,Le2, Le3}. For all unproven statements, we refer the reader to \cite[Sections 1, 2 and 3]{BPT}.

Let $K\subset \C^n$ be a bounded strongly convex domain with smooth boundary. A {\sl complex geodesic} $\varphi:\D \to  K$ is a holomorphic map which is an isometry between the hyperbolic metric/distance of $\D$ and the Kobayashi metric/distance of $K$. Every complex geodesics in $K$ extends smoothly up to the boundary of $\D$. Given a point $q\in \partial K$ and $v\not\in T_q^\C \partial K$ there exists a complex geodesic $\varphi:\D \to K$ such that $\varphi(1)=q$ and $\varphi'(1)=\lambda v$ for some $\lambda\neq 0$. Moreover, it is unique up to pre-composition with automorphisms of $\D$ fixing $1$. 

To every complex geodesic $\varphi$ is associated a holomorphic retraction $\rho: K \to \varphi(\D)$ such that $\rho\circ \rho=\rho$, and it is the unique holomorphic retraction on $\varphi(\D)$ with affine fibers (\cite[Prop. 3.3]{BPT}), which we call the {\sl Lempert projection}. The Lempert projection $\rho$ is smooth up to $\overline{K}$ and $\rho(z)\in \partial \varphi(\D)$ if and only if $z\in \partial \varphi(\D)$. The map $\tilde\rho:=\varphi^{-1}\circ \rho:K\to \D$ is called the {\sl left-inverse of $\varphi$}.

Lempert \cite[Proposition 11]{Le2} proved that, given any complex geodesic $\varphi$, there exists a biholomorphism $\Phi:K\to \Phi(K)$, which extends as a diffeomorphism up to $\partial K$ such that $\Phi(\varphi(\zeta))=(\zeta,0,\ldots, 0)$, $\Phi(K)$ is strongly convex near $\Phi(\partial \varphi(\D))$, $\Phi(K)\subset \D\times \C^{n-1}$ and the associated Lempert projection $\rho_S$ (that is, $\rho_S=\Phi \circ \rho \circ \Phi^{-1}$) is given by $\rho_S(z)=(z_1,0,\ldots, 0)$. We call $\Phi$ the {\sl Lempert special coordinates adapted to $\varphi(\D)$}. 

\begin{remark}\label{Rem:Lempert-un-grande}
Let $B, W$ be two bounded strongly convex domains with smooth boundary such that $B\subset W$. Assume there exist $p\in \partial B$ and a neighborhood $U$ of $p$ such that $B\cap U=W\cap U$. Let $\varphi:\D \to W$ be a complex geodesic of $W$ such that $\varphi(1)=p$ and suppose that $\varphi(\oD)\subset U$. Let $\rho:W\to \varphi(\D)$ be the Lempert projection. Hence, $\rho|_B:B\to \varphi(\D)$ is a holomorphic retraction and it is then easy to prove that $\varphi$ is a complex geodesic for $B$ as well. It is easy to see that,  if $\Phi$ are Lempert special coordinates for $W$ adapted to $\varphi$, then $\Phi|_B$ are Lempert special coordinates for $B$ adapted to $\varphi$. 
\end{remark} 

Let 
\begin{equation}\label{Eq:Lp}
L_p:=\{v\in \C^n: |v|=1, \langle v, \nu_p\rangle>0, iv\in T_p\partial K\}
\end{equation}
where $\nu_p$ is the outer unit  normal vector to $\partial K$ at $p$ and $\langle \cdot, \cdot \rangle$ is the standard Hermitian product. According to Chang-Hu-Lee \cite{CHL}  for every $v\in L_p$ there exists a unique complex geodesic $\varphi_v:\D \to B$ such that $\varphi_v(1)=p$, $\varphi_v'(1)=\langle v, \nu_p\rangle v$ and $\Im\langle \varphi''(1), \nu_p\rangle =0$. 

\begin{definition}\label{Def:CHL}
A complex geodesic in $K$ parameterized so that $\varphi_v(1)=p$, $\varphi_v'(1)=\langle v, \nu_p\rangle v$ and $\Im\langle \varphi''(1), \nu_p\rangle =0$ is called a {\sl CHL complex geodesic}.
\end{definition}

Given a complex geodesic $\eta:\D \to K$ such that $p\in \eta(\oD)$, there exists an automorphism $h$ of $\D$ such that $\eta\circ h$ is a CHL complex geodesic.

In the sequel we need the following result, which can be seen as a sort of converse of \cite[Prop. 2.5]{BFW}:

\begin{lemma}\label{Lem:tgc}
Let $D\subset \C^n$ be a bounded strongly convex domain with smooth boundary. Let $p\in\partial D$ and let $\nu_p$ be the outer unit normal vector of $\partial D$ at $p$. Let $\{\varphi_j\}$ be a sequence of complex geodesics such that $\varphi_j(1)=p$ for all $j$. Suppose that for every neighborhood $U$ of $p$ there exists $j_U$ such that $\varphi_j(\oD)\subset U$ for all $j>j_U$. Then
\[
\lim_{j\to \infty}\frac{\langle \varphi_j(\zeta)-p, \nu_p\rangle}{|\varphi_j(\zeta)-p|}=0,
\]
uniformly in $\zeta\in \D$.
\end{lemma}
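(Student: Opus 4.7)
The plan is to proceed by contradiction, combining the continuity of the Chang--Hu--Lee family of complex geodesics with a rescaling argument.

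First, I would pre-compose each $\varphi_j$ with the unique automorphism of $\D$ fixing $1$ that puts it in CHL form (Definition~\ref{Def:CHL}). Since this leaves both the image $\varphi_j(\oD)$ and the supremum of the ratio over $\zeta\in\D$ unchanged, I may assume $\varphi_j = \varphi_{v_j}$ for some unit vector $v_j \in L_p$, with $\lambda_j := \langle v_j, \nu_p\rangle > 0$ and $\varphi_j'(1) = \lambda_j v_j$.

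Next, I would show that $\lambda_j \to 0$. Otherwise, after passing to a subsequence, $v_j \to v_* \in L_p$ with $\langle v_*, \nu_p\rangle > 0$, and the continuous dependence of the CHL geodesics on their tangent direction (a standard consequence of Lempert's theory) gives $\varphi_{v_j} \to \varphi_{v_*}$ uniformly on $\oD$. Hence $\varphi_j(\oD)$ converges in Hausdorff distance to the non-trivial closed analytic disc $\varphi_{v_*}(\oD)$, contradicting the hypothesis $\varphi_j(\oD) \to \{p\}$.

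With $\lambda_j \to 0$, I would then deduce the uniform smallness of the ratio by rescaling. Setting $\tilde\varphi_j(\zeta) := (\varphi_{v_j}(\zeta) - p)/\lambda_j$, we have $\tilde\varphi_j(1) = 0$ and $\tilde\varphi_j'(1) = v_j$. Using strong convexity to bound $\sup_{\oD}|\varphi_{v_j} - p|$ by a constant times $\lambda_j$, the family $\{\tilde\varphi_j\}$ is normal; extract a locally-uniform limit $\tilde\varphi_\infty$ on $\D$ with $\tilde\varphi_\infty(1) = 0$ and $\tilde\varphi_\infty'(1) = v_*$, which is complex tangential by the previous step. Dividing the strong-convexity estimate $-\Re\langle \varphi_{v_j} - p, \nu_p\rangle \geq c|\varphi_{v_j} - p|^2$ by $\lambda_j^2$ and passing to the limit yields $\Re\langle \tilde\varphi_\infty, \nu_p\rangle \leq 0$ on $\D$, vanishing at $\zeta=1$ with vanishing normal derivative there (since $\langle v_*, \nu_p\rangle = 0$). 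Hopf's lemma then forces $\Re\langle \tilde\varphi_\infty, \nu_p\rangle \equiv 0$, whence $\langle \tilde\varphi_\infty, \nu_p\rangle \equiv 0$. Consequently $\langle \varphi_{v_j} - p, \nu_p\rangle/\lambda_j \to 0$ locally uniformly in $\D$. Combining this with the Taylor expansion $\varphi_{v_j}(\zeta) - p = \lambda_j v_j(\zeta-1) + O((1-\zeta)^2)$ at $\zeta = 1$ (which yields ratio $\sim \lambda_j \to 0$ on a neighborhood of $\zeta = 1$) gives the uniform smallness of $|\langle \varphi_{v_j} - p, \nu_p\rangle|/|\varphi_{v_j} - p|$ on all of $\D$.

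The main obstacle will be this last step: verifying the uniform Schwarz-type bound $\sup_{\oD}|\varphi_{v_j} - p| \lesssim \lambda_j$ needed for normality of the rescaled family, and patching the rescaled limit on compacta of $\D$ with the Taylor expansion near $\zeta = 1$ to obtain uniform convergence on all of $\D$ (rather than merely locally uniform on compacta of $\D\setminus\{1\}$).
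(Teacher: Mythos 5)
Your proposal takes a genuinely different route from the paper's, and it is worth comparing them, because doing so exposes where your version has a real gap.

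The paper's proof is short and direct: after normalizing coordinates so that $p=0$ and $\nu_p=e_n$, it invokes Huang's estimate from \cite{H0} (p.~409), which gives a \emph{uniform} expansion
\[
\frac{\varphi_j'(\zeta)}{|\gamma_j|} = -(a_{n-1,j},\ldots,a_{1,j},0) + O(\delta_j),
\qquad \delta_j := \max_{\oD}|\varphi_j|,
\]
valid for all $\zeta\in\D$. Integrating along segments $[\zeta,1]$ and dividing the normal component by the full vector gives the conclusion at once. The whole point is that Huang's formula is a ready-made uniform $C^1(\oD)$ estimate.

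Your plan instead rescales, $\tilde\varphi_j := (\varphi_{v_j}-p)/\lambda_j$ with $\lambda_j := \langle v_j,\nu_p\rangle$, extracts a normal-family limit, and applies Hopf's lemma to conclude that the limit is complex-tangential. The idea is reasonable, and the Hopf argument (using strong convexity and $\langle v_*,\nu_p\rangle=0$, then the harmonic conjugate) is correct \emph{once you have the limit with the required regularity}. But the regularity and the patching are exactly where it breaks.

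The central difficulty, which you flag but underestimate, is the final patching. Locally-uniform convergence of $\tilde\varphi_j$ on compacta of $\D$ controls the ratio $|\langle\varphi_{v_j}(\zeta)-p,\nu_p\rangle|/|\varphi_{v_j}(\zeta)-p|$ only for $|\zeta|\leq 1-\varepsilon$. Near $\zeta=1$, both numerator and denominator vanish like $|\zeta-1|$, so you need to control the ratio of first derivatives there, uniformly in $j$. The pointwise Taylor expansion $\varphi_{v_j}(\zeta)-p = \lambda_j v_j(\zeta-1) + O((\zeta-1)^2)$ only beats the error term on the shrinking region $|\zeta-1|\lesssim\lambda_j$: the $O((\zeta-1)^2)$ term has a $j$-dependent constant that you have not controlled. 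In the intermediate annular region $1-\varepsilon < |\zeta| < 1$ with $|\zeta-1|\gg\lambda_j$, neither piece applies. Closing this gap requires a \emph{uniform} $C^{1,\alpha}(\oD)$ bound on the rescaled family $\tilde\varphi_j$ (equivalently, $|\varphi_{v_j}'(\zeta)| \asymp \lambda_j$ and $|\varphi_{v_j}''(\zeta)|\lesssim\lambda_j$ uniformly on $\oD$). But that is precisely what Huang's estimate in \cite{H0} supplies. The same uniform $C^1(\oD)$ control is also what you implicitly use to get $\tilde\varphi_\infty(1)=0$ and $\tilde\varphi_\infty'(1)=v_*$ from a locally uniform limit on $\D$, and to get the claimed Schwarz-type bound $\sup_{\oD}|\varphi_{v_j}-p|\lesssim\lambda_j$. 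So your plan does not avoid the paper's main ingredient: it still needs it, and then layers a normal-family/Hopf argument on top. As a self-contained proof the proposal has a genuine gap; as a route it is strictly longer than simply quoting and integrating Huang's formula.

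A secondary remark: the initial reduction to CHL form and the argument that $\lambda_j\to 0$ are correct, but unnecessary in the paper's approach, where $\delta_j\to 0$ is taken directly from the hypothesis and no normalization of the parametrization is needed.
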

\begin{proof}
Let  $\{ e_j \}_{j=1,\ldots, n}$ denote the canonical base of $\C^n.$ Up to local holomorphic change of coordinate we may  assume that $p = 0$,  $\nu_p = e_{n}$  and that a defining function of $\partial D$ near the origin is of the form $Re(z_n) + h(z_1, \ldots,z_{n-1},\Im z_n)$, where $h$ is a smooth function which is zero up to the first order at $0$.   Let 
\[
\delta_j:= \max\{|\varphi_j(\zeta)|: \zeta\in \oD\}.
\]
Note that, by hypothesis, $\delta_j\to 0$ as $j\to \infty$.
  
In the proof of his Theorem 2 (see  \cite[pag. 409 penultimate displayed formula]{H0}), Huang shows that there exist $j_0$ and a sequence  $\{\gamma_j\}\subset \C^{n-1}\setminus\{0\}$ such that,  writing $\frac{\gamma_j}{|\gamma_j|}:=(a_{1,j}, \ldots,a_{n-1,j})$,  the following equality  holds uniformly with respect to $\zeta \in \D$, for $j\geq j_0$:
\begin{equation}\label{Eq:Huang} 
\frac{ \varphi_j'(\zeta)}{|\gamma_j|} = - (a_{n-1,j}, a_{n-2,j}, \ldots, a_{1,j},0) + O(\delta_j).
\end{equation}
Now,   fix $\xi \in \D$, and $0 \leq t \leq 1$ and let $\zeta = t\xi + (1-t)$. Multiplying by $(\xi-1)$ and integrating \eqref{Eq:Huang} in $t$ between $0$ and $1$,  bearing in mind that $\varphi_j(1) = 0$, we obtain
$$ 
\frac{ \langle \varphi_j(\xi),e_n\rangle }{|\gamma_j|} =  O(\delta_j)(\xi-1)
$$ 
and 
$$ 
\frac{ \varphi_j(\xi)}{|\gamma_j|} =  \left[ - (a_{n-1,j}, a_{n-2,j}, \ldots,a_{1,j},0) + O(\delta_j) \right](\xi-1). 
$$
Hence,
$$ \frac{  |\langle \varphi_j(\xi),e_n\rangle|}{|\varphi_j(\xi)|} = \frac{O(\delta_j)}{1 + O(\delta_j)},
$$
and the statement follows.
\end{proof}

\subsection{Defining couples adapted to a complex geodesic} Now, fix a point $p\in \partial K$. Let  $\varphi_0:\D\to K$ be a complex geodesic such that $\varphi_0(1)=p$, and $v_0:=\varphi_0'(1)$.  Let $\rho:K\to \varphi_0(\D)$ be the associated Lempert projection and $\tilde\rho$ the left-inverse of $\varphi_0$. Let $\Phi$ be the Lempert special coordinates adapted to $\varphi_0$ such that $\varphi_S(\zeta):=\Phi(\varphi_0(\zeta))=(\zeta,0,\ldots, 0)$, $\zeta\in \D$ and $\Phi(p)=e_1:=(1,0,\ldots, 0)$. Note that $d\Phi_p (v_0)=(\Phi\circ \varphi_0)'(1)=e_1$.

Let $K':=\Phi(K)$. Note that $d(\rho_S)_{e_1}(z)=(z_1,0,\ldots, 0)$, hence, the left-inverse $\tilde\rho_S:=\varphi_S^{-1}\circ \rho_S$ satisfies $\tilde\rho_S(z)=z_1$. Therefore,
\[
T^\C_{e_1}\partial K'=\ker d(\tilde\rho_S)_{e_1}, \quad T_{e_1}\partial K'=\ker \Re d(\tilde\rho_S)_{e_1}.
\]
Since $d\Phi_p$ maps $T^\C_{p}\partial K$ onto $T^\C_{e_1}\partial K'$ and $T_{p}\partial K'$ onto $T_{e_1}\partial K'$, taking into account that $\tilde \rho=\tilde\rho_S\circ \Phi$, we see that
\begin{equation}\label{kernel-tg}
T^\C_{p}\partial K=\ker d\tilde\rho_{p}, \quad T_{p}\partial K=\ker \Re d\tilde\rho_{p}.
\end{equation}
\[
\theta: \C^n\to \R, \quad \theta(v):=d\tilde\rho_p(v).
\]
Let $\alpha:=\Im \theta|_{T_p\partial K}$. Then,  by \eqref{kernel-tg}, $\ker \alpha=T_p^\C\partial K$. Therefore, $(\alpha,\theta)$ is a defining couple of $T_p^\C\partial K$. 
\begin{definition}
The couple $(\alpha,\theta)$ defined above is the {\sl defining couple adapted to $\varphi_0$}.
\end{definition}

Thus we can define the pluricomplex Poisson kernel $\Omega^\alpha_{K,p}$. Now, let $\varphi:\D\to K$  be another complex geodesic such that $\varphi(1)=p$. By Hopf's Lemma,  $\varphi'(1)\not\in T_p\partial K$. Hence, 
\begin{equation}\label{Eq:curve-special-CHL}
\lim_{r\to 1^-}\Omega^\alpha_{K,p}(\varphi(r))(1-r)=-2\Re \frac{1}{\theta(\varphi'(1))}=-2\Re \frac{1}{d\tilde\rho_p(\varphi'(1))}.
\end{equation}
By Proposition \ref{ovvia}, there exists a constant $m>0$ such that $\Omega^\alpha_{K,p}=m\Omega_{K,p}$, where $\Omega_{K,p}$ is the pluricomplex Poisson kernel constructed in \cite{B-P, BPT}---and it is, in fact, the pluricomplex Poisson kernel associated to the linear form $T_p\partial K\ni v\mapsto \langle v, \nu_p\rangle$, where $\nu_p$ is the outer unit  normal vector to $\partial K$ at $p$. 

By construction (see \cite[(1.2)]{BPT}) $\Omega_{K,p}$ restricted to the complex geodesics whose closure contain $p$ is a multiple of the Poisson kernel $P_\D$. Thus, by the Phragm\'en-Lindel\"of theorem in~$\D$,
\begin{equation}\label{Eq:a-v}
\Omega^\alpha_{K,p}(\varphi(\zeta))=\Re\left(\frac{1}{d\tilde\rho_p(\varphi'(1))}\right) P_\D(\zeta).
\end{equation}

\subsection{Entrapping strongly pseudoconvex domains between strongly convex domains}\label{subset-tecn} Assume that $D\subset \C^n$ is a bounded, strongly pseudoconvex domain with smooth boundary. Let $p\in \partial D$. By \cite[Thm. 2.6]{BFW} there exist a biholomorphism $\Phi:D\to \Phi(D)$, which extends as a smooth diffeomorphism on the boundary, a bounded strongly convex domain $W\subset \C^n$ with smooth boundary, and an open neighborhood $U$ of $\Phi(p)$ such that $\Phi(D)\subset W$ and $U\cap W=U\cap \Phi(D)$. Moreover, it is clear that we find a smooth bounded strongly convex domain $B\subset \C^n$ such that $B\subset \Phi(D)$ and $B\cap U=\Phi(D)\cap U$. Therefore, up to considering $\Phi(D)$ instead of $D$, we  assume that

\bigskip

\begin{itemize}
\item[(H1)] There exist two bounded strongly convex domains $B,W\subset \C^n$ with smooth boundary and an open neighborhood $U$ of $p$ such that $B\subset D\subset W$ and $U\cap D=U\cap W=U\cap B$.
\end{itemize}

\bigskip

Note that 
\[
A:=T_p\partial D=T_p\partial W=T_p\partial B
\]
 and that 
 \[
A^\C:= T^\C_p\partial D=T^\C_p\partial W=T^\C_p\partial B.
 \]

 The following lemma follows the ideas in \cite[Proof of Thm. 2.6]{BFW} and it is based on a ``preservation of geodesics'' result by X. Huang \cite[Corollary 1]{H}. For a vector $v\in \C^n$ we let $v^T$ the orthogonal projection onto $A^\C$ and  $v^N=v-v^T$.

\begin{lemma}\label{Lemma:geo}
There exists $\epsilon>0$ such that if $v\in A\setminus A^\C$,  is such that $|v^N|<\epsilon |v^T|$ then any complex geodesic $\varphi:\D \to W$ such that $\varphi(1)=p$ and $\varphi'(1)=\lambda v$ for some $\lambda\neq 0$ is a complex geodesic---that is, an isometry for both the Kobayashi distance and Kobayashi metric---also for $D$ and for $B$. Moreover, if $\rho:W\to \varphi(\D)$ is the Lempert projection associated to $\varphi$ (as a complex geodesic in $W$), then $\rho|_B$ is the Lempert projection associated to $\varphi$ (as a complex geodesic in $B$). Also,  $\rho|_D$ is a holomorphic retraction of $D$ onto $\varphi(\D)$ with affine fibers.
 \end{lemma}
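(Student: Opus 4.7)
The plan is two-fold. First, I would choose $\epsilon>0$ so small that the hypothesis $|v^N|<\epsilon|v^T|$ forces $\varphi(\oD)\subset U$. Second, once the image is confined to $U\cap W=U\cap D=U\cap B$, the geodesic statement follows from monotonicity of the Kobayashi pseudodistance, and the Lempert projection assertions follow from the uniqueness of the Lempert projection as the holomorphic retraction onto $\varphi(\D)$ with affine fibres.

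For the confinement I would argue by contradiction. If no such $\epsilon$ exists, there are unit vectors $v_j\in A\setminus A^\C$ with $|v_j^N|/|v_j^T|\to 0$ and complex geodesics $\varphi_j$ of $W$ with $\varphi_j(1)=p$, $\varphi_j'(1)$ parallel to $v_j$, but $\varphi_j(\oD)\not\subset U$. Rewriting in the CHL parametrization (Definition~\ref{Def:CHL}), these correspond to $w_j\in L_p$ with $w_j=\pm i v_j/|v_j|$, and a direct computation gives $\langle w_j,\nu_p\rangle=|v_j^N|/|v_j|\to 0$. By Lempert's smooth dependence of complex geodesics on boundary data, the bounded holomorphic family $\{\varphi_{w_j}\}$ admits a subsequence converging uniformly on $\oD$ to a holomorphic map $\varphi_\infty:\D\to\overline W$ with $\varphi_\infty(1)=p$ and $\varphi_\infty'(1)=\langle w_\infty,\nu_p\rangle w_\infty=0$. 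Hopf's lemma applied to a strictly plurisubharmonic defining function of $W$ composed with $\varphi_\infty$ then forces $\varphi_\infty\equiv p$, so $\varphi_{w_j}(\oD)$ shrinks to $\{p\}$ in Hausdorff distance, contradicting $\varphi_j(\oD)\not\subset U$. Hence the desired $\epsilon$ exists, and by (H1) we have $\varphi(\oD)\subset U\cap W=U\cap B$, so $\varphi(\D)\subset B\subset D\subset W$.

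For the geodesic claim, since $B\subset D\subset W$ the Kobayashi pseudodistance satisfies $k_W\leq k_D\leq k_B$. As $\varphi:\D\to W$ is an isometry while $\varphi:\D\to B$ and $\varphi:\D\to D$ are holomorphic and hence $k$-decreasing, one obtains the chain
\[
k_\D(\zeta_1,\zeta_2)=k_W(\varphi(\zeta_1),\varphi(\zeta_2))\leq k_D(\varphi(\zeta_1),\varphi(\zeta_2))\leq k_B(\varphi(\zeta_1),\varphi(\zeta_2))\leq k_\D(\zeta_1,\zeta_2),
\]
which forces equality throughout; hence $\varphi$ is a complex geodesic of both $D$ and $B$, and the same sandwich applied to the infinitesimal Kobayashi metric gives the metric version. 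Finally, let $\rho:W\to\varphi(\D)$ be the Lempert projection of $\varphi$ in $W$; its fibres are intersections of $W$ with affine complex hyperplanes, and $\rho(W)=\varphi(\D)\subset B\subset D$. Thus $\rho|_B$ is a holomorphic retraction of $B$ onto $\varphi(\D)$ with affine fibres, hence by the uniqueness of such a retraction in a strongly convex domain (\cite[Prop.~3.3]{BPT}) it coincides with the Lempert projection of $B$; similarly $\rho|_D:D\to\varphi(\D)$ is a holomorphic retraction with affine fibres.

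The main obstacle will be the confinement step, which requires converting Lemma~\ref{Lem:tgc} (\emph{shrinking images at $p$ force directions to become complex tangential}) into its quantitative converse (\emph{nearly complex tangential directions force shrinking images}). The key inputs are Lempert's smooth dependence of CHL complex geodesics on boundary data and Hopf's lemma, which together exclude a non-degenerate limit geodesic having tangential derivative at $p$. Everything else is routine Kobayashi monotonicity plus the uniqueness characterization of the Lempert projection.
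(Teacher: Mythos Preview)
Your overall architecture---confine $\varphi(\oD)$ to $U$, then run the Kobayashi sandwich and invoke the uniqueness of the affine-fibred retraction \cite[Prop.~3.3]{BPT}---is exactly the paper's, and the second half of your argument is correct and matches the paper's proof essentially verbatim.

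The gap is in the confinement step. The paper does not argue this directly: it simply cites Huang's preservation principle \cite[Corollary~1]{H}, which is precisely the assertion that extremal discs through a strongly pseudoconvex point $p$ with sufficiently complex-tangential direction have closure in any prescribed neighborhood of $p$. Your contradiction argument is an attempt to re-prove that result, and the step that is not justified is the appeal to ``Lempert's smooth dependence of complex geodesics on boundary data'' to extract a subsequence of $\{\varphi_{w_j}\}$ converging \emph{uniformly on $\oD$} with $\varphi_\infty(1)=p$ and $\varphi_\infty'(1)=0$. Smooth dependence holds for $w$ in the open parameter set $L_p$, but you are sending $w_j\to\partial L_p$ (where $\langle w,\nu_p\rangle=0$), and there is no off-the-shelf $C^1(\oD)$-compactness across that degeneration. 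Montel gives convergence only on compacta of $\D$, after which one must split into cases: either $\varphi_\infty$ is a genuine complex geodesic of $W$, in which case Huang's $C^1$-convergence result \cite[Prop.~1]{H0} upgrades the convergence to $\oD$ and your Hopf-lemma contradiction goes through; or $\varphi_\infty$ is a boundary constant, in which case a separate argument (e.g.\ \cite[Prop.~2.3]{BFW}, as used in the proof of Proposition~\ref{tg-lim}) is needed to conclude $\varphi_{w_j}(\oD)\to\{p\}$ in Hausdorff distance. Both branches then contradict $\varphi_j(\oD)\not\subset U$, but this case distinction together with those two inputs \emph{is} essentially the content of \cite[Corollary~1]{H}, so citing it (as the paper does) is both shorter and more honest about where the work lies.
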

 \begin{proof}
Let $U'$ be an open neighborhood  of $p$ such that $\overline{U'}\subset U$. By \cite[Corollary 1]{H}, there exists $\epsilon>0$ such that if $v\in A\setminus A^\C$,  is such that $|v^N|<\epsilon |v^T|$ then any complex geodesic $\varphi:\D \to W$ such that $\varphi(1)=p$ and $\varphi'(1)=\lambda v$ for some $\lambda\neq 0$ is such that $\varphi(\oD)\subset U'$. Let $\rho:W\to \varphi(\D)$ be the associated Lempert projection. Then $\rho(W)=\varphi(\D)\subset U'\subset B\subset D$. Hence, $\varphi(\D)$ is a holomorphic retract of $B$ and of $D$. Since it is  well known that that one-dimensional holomorphic retracts of strongly convex domains are complex geodesics,  $\varphi$ is a complex geodesic in $B$.  By the same token, or by the monotonicity property of the Kobayashi distance, it follows that $\varphi$ is an isometry for both the Kobayashi metric and the Kobayashi distance of $D$ as well. Finally, since the fibers of $\rho|_B$ are affine, by \cite[Proposition~3.3]{BPT}, it is the Lempert projection associated to $\varphi$ as a complex geodesic in $B$.
 \end{proof}

\begin{definition}
Let $v\in A\setminus A^\C$ be a vector. We say that $v_0$ is {\sl compatible} if $|v^N|<\epsilon |v^T|$, where $\epsilon>0$ is given by Lemma~\ref{Lemma:geo}.

A holomorphic map $\varphi:\D \to \C^n$, which extends smoothly on $\partial \D$ and is a complex geodesic (that is, an isometry for both the Kobayashi distance and the Kobayashi metric) in either $B$, $D$ or $W$ and such that $\varphi(1)=p$ and $\varphi'(1)$ is compatible at $p$, is a {\sl compatible} complex geodesic. 

The defining couple of $A^\C$ associated to $\varphi$ is called a {\sl compatible defining couple}.
\end{definition}

The definition of compatible complex geodesics needs a couple of comments. Holomorphic discs which are isometries for the Kobayashi metric are usually called ``infinitesimal complex geodesics''. In bounded convex domains, complex geodesics and infinitesimal complex geodesics are one and the same. In other domains, it is not known. However, for the aim of this paper, it is convenient to call ``complex geodesic'' any holomorphic disc which is both an isometry for the Kobayashi metric and for the Kobayashi distance. 

Also, every complex geodesic in $B$ and $W$ extends smoothly on the boundary. However, this is not granted for complex geodesics in $D$, but it is part of the definition of ``compatibility''. 

The following lemma explains why we choose to use a unified terminology for $B, W$ and $D$:

\begin{lemma}
A holomorphic map $\varphi:\D \to \C^n$ is a compatible complex geodesic in $B$ if and only if it is a a compatible complex geodesic in $D$ if and only if it is a a compatible complex geodesic in $W$. In particular, every compatible complex geodesic in $D$ is the image of a  holomorphic retract of $D$ with affine fibers.
\end{lemma}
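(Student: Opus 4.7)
The plan is to prove the three equivalences as a cycle, using Lemma~\ref{Lemma:geo} for the easy directions and a more delicate argument in Lempert special coordinates for the hard one. For ``compatible in $W$ implies compatible in $B$ and in $D$'', this is essentially Lemma~\ref{Lemma:geo}: a compatible complex geodesic $\varphi$ in $W$ has image in $B\subset D$, the Lempert projection $\rho:W\to\varphi(\D)$ restricts to a holomorphic retraction of $B$ (and of $D$) onto $\varphi(\D)$ with affine fibers, and the monotonicity $c_W\le c_D$ on $D\times D$, together with $c_W(\varphi(\zeta_1),\varphi(\zeta_2))=\omega(\zeta_1,\zeta_2)$ and the Schwarz--Pick bound $c_D\le\omega$, forces $\varphi$ to be an isometry also for $c_D$.

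For ``compatible in $B$ implies compatible in $W$'' (and hence in $D$ by the previous step), given such a $\varphi$ in $B$ I would take $\tilde\varphi$ to be a complex geodesic in $W$ with $\tilde\varphi(1)=p$ and $\tilde\varphi'(1)$ collinear with $\varphi'(1)$. By the previous step, $\tilde\varphi$ is a compatible complex geodesic in $B$ with the same boundary point and collinear tangent as $\varphi$. Lempert's uniqueness of complex geodesics in the strongly convex $B$ (up to precomposition with an element of $\Aut(\D)$ fixing $1$) yields $\varphi=\tilde\varphi\circ h$ for such an $h$; since automorphisms of $\D$ are hyperbolic isometries, $\varphi$ is an isometry $\D\to W$, and hence compatible in $W$.

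The hard direction is ``compatible in $D$ implies compatible in $W$'' (which then implies compatible in $B$ by the first step). Given such a $\varphi$ with $\varphi'(1)=v$, let $\tilde\varphi$ be the compatible complex geodesic in $W$ with $\tilde\varphi'(1)=v$, which by the first step is also a complex geodesic in $D$ with $\tilde\varphi(\oD)\subset B$. The Lempert projection restricts to a holomorphic retraction $\rho|_D:D\to\tilde\varphi(\D)$ with affine fibers. The goal is to show $\varphi(\D)=\tilde\varphi(\D)$; once this is established, the isometry property of $\varphi$ and $\tilde\varphi$ for $c_D$ forces $\tilde\varphi^{-1}\circ\varphi\in\Aut(\D)$, so $\varphi=\tilde\varphi\circ h$ is a compatible complex geodesic in $W$, closing the cycle. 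The ``in particular'' clause is then immediate, since $\varphi(\D)=\tilde\varphi(\D)$ is the image of $\rho|_D$, which by Lemma~\ref{Lemma:geo} is a holomorphic retract of $D$ with affine fibers.

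The main obstacle is proving $\varphi(\D)=\tilde\varphi(\D)$. I would work in Lempert special coordinates $\Phi$ adapted to $\tilde\varphi$, where $\Phi\circ\tilde\varphi(\zeta)=(\zeta,0,\ldots,0)$ and $\rho_S(z_1,z')=(z_1,0)$; writing $\Phi\circ\varphi=(\alpha,\beta)$, the boundary data at $\zeta=1$ give $\alpha(1)=1$, $\alpha'(1)=1$, $\beta(1)=0$, $\beta'(1)=0$, and the target reduces to $\beta\equiv 0$. First-order data alone does not suffice, as parabolic automorphisms of $\D$ fixing $1$ satisfy the same identities as $\alpha$. The expected resolution uses higher-order boundary rigidity in the strongly pseudoconvex domain $\Phi(D)$ at $e_1$: both $\Phi\circ\varphi$ and $\zeta\mapsto(\zeta,0)$ are stationary discs attached to $e_1$ with matching initial data, and a Burns--Krantz style argument combined with Huang's preservation principle~\cite{H} should force them to coincide.
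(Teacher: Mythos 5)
Your treatment of the directions ``compatible in $W$ implies compatible in $B$ and $D$'' and ``compatible in $B$ implies compatible in $W$ and $D$'' matches the paper's proof. The problem is the third direction, ``compatible in $D$ implies compatible in $B$ (equivalently $W$)'', where your argument has a genuine gap: you reduce the claim to showing $\varphi(\D)=\tilde\varphi(\D)$, reformulate this in Lempert special coordinates as $\beta\equiv 0$, observe (correctly) that first-order boundary data is inconclusive, and then state that ``a Burns--Krantz style argument combined with Huang's preservation principle should force them to coincide''. That last sentence is a declaration of intent, not a proof, and it is the crux of the whole direction. No such rigidity statement is established or cited in a usable form, and it is not clear that the Burns--Krantz mechanism even applies here (the two discs are not related by a self-map of $D$ fixing $p$ to high order, which is the usual setting).

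The missing idea is that Huang's preservation principle \cite[Corollary~1]{H} can be applied \emph{directly to $\varphi$}, viewed as an infinitesimal complex geodesic in the strongly pseudoconvex domain $D$ that is smooth up to the boundary --- that smoothness is built into the definition of a compatible complex geodesic precisely so the hypothesis of \cite[Corollary~1]{H} is met. The conclusion is that $\varphi(\oD)\subset U'$, hence $\varphi(\D)\subset B$. Then the monotonicity of the Kobayashi distance (together with $B\subset D$) forces $\varphi$ to be an isometry for $k_B$, i.e.\ a compatible complex geodesic in $B$, and the already-proved implication $B\Rightarrow W$ closes the cycle. Your plan only applies Huang's principle to the auxiliary geodesic $\tilde\varphi$, never to $\varphi$ itself, which is why you are stuck trying to prove a coincidence of two discs that you do not yet know share an ambient convex domain. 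Once you know $\varphi(\D)\subset B$, the equality $\varphi=\tilde\varphi\circ h$ (and hence the ``in particular'' clause) falls out of Lempert's uniqueness in the strongly convex $B$, exactly as in your step for the $B\Rightarrow W$ implication, with no need for higher-order boundary rigidity.
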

\begin{proof}
By Lemma~\ref{Lemma:geo}, if $\varphi$ is a compatible complex geodesic in $W$, then it is a compatible complex geodesic in $B$ and $D$ (and it is the image of a holomorphic retract of $D$ with affine fibers). 

If $\varphi$ is compatible in $B$, let $\eta:\D \to W$ be the complex geodesic in $W$ such that $\eta(1)=1$ and $\eta'(1)=\lambda \varphi'(1)$ for some $\lambda\neq 0$. Then, $\eta$ is compatible in $W$, hence, by Lemma~\ref{Lemma:geo}, it is also a complex geodesic in $B$ (and in $D$) and, by the uniqueness up to re-parametrization of complex geodesics in strongly convex domains, it follows that $\varphi=\eta\circ h$ for a suitable automorphism $h$ of $\D$ such that $h(1)=1$. Hence, $\varphi$ is a compatible complex geodesic in $W$ and $D$ as well.

Finally, if $\varphi$ is a compatible complex geodesic in $D$, by definition it is also an infinitesimal complex geodesic and, being smooth up to the boundary,  the hypothesis in Huang's preservation principle \cite[Corollary~1]{H} is satisfied. Therefore, $\varphi(\D)\subset U'$ (where $U'$ is defined in the proof of Lemma~\ref{Lemma:geo}). In particular, $\varphi(\D)\subset B$. By the monotonicity of the Kobayashi distance, it follows that $\varphi$ is a (compatible) complex geodesic of $B$ as well, and, for what we already proved, for $W$.
\end{proof}

In the sequel we let $\varphi_0:\D \to B$ be a complex geodesic such that $\varphi_0(1)=p$ and $v_0:=\varphi_0'(1)$ is a compatible vector, that is, $\varphi_0$ is a compatible complex geodesic. Let $\rho$ be Lempert projection associated with $\varphi_0$ in $W$. By Lemma~\ref{Lemma:geo}, $\rho|_B$ is the Lempert projection of $\varphi_0$ as complex geodesic in $B$, hence, $\tilde\rho:=\varphi^{-1}\circ \rho$ is the left-inverse of $\varphi_0$ (in both $B$ and $W$). 

Let $\theta:=d\tilde\rho_p$ and let $\alpha:=\Im \theta|_A$. Then $(\alpha,\theta)$ is a compatible defining couple of $T^\C_p\partial D$.

For all $z\in B$,
\begin{equation}\label{Eq:dineq}
\Omega^\alpha_{W,p}(z)\leq \Omega^\alpha_{D,p}(z)\leq \Omega^\alpha_{B,p}(z).
\end{equation}

As a corollary of the previous inequalities we generalize \cite[Corollary 5.3]{BPT}:
\begin{proposition}\label{tg-lim}
Let $\{z_n\}\subset D$ be a sequence converging to $p$ and assume that $\lim_{n\to \infty}\frac{p-z_n}{|p-z_n|}=v\not\in A^\C$. Then
\begin{equation}\label{Eq:stef-serve}
\lim_{n\to \infty}\Omega^\alpha_{M,p}(z_n)|\langle p-z_n, \nu_p\rangle |=-2\Re \left(\frac{1}{\theta (\nu_p)}\right)\frac{\Re \langle v, \nu_p\rangle}{|\langle v, \nu_p\rangle|},
\end{equation}
where $M=B, D, W$ and $\nu_p$ is the outer unit  normal vector of $M$ at $p$.
\end{proposition}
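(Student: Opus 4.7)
The plan is to prove the proposition first for the two strongly convex domains $B$ and $W$, and then deduce the statement for the pseudoconvex case $M=D$ by a squeeze using the inequalities \eqref{Eq:dineq}.

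\textbf{Convex case ($M=B$ or $M=W$).} Since $\varphi_0$ is compatible, Lemma~\ref{Lemma:geo} guarantees it is a complex geodesic in both $B$ and $W$, so the left-inverse $\tilde\rho$ and the defining couple $(\alpha,\theta)=(\Im\theta|_A,d\tilde\rho_p)$ are unambiguously defined in either domain. Let $\Omega_{M,p}$ denote the standard pluricomplex Poisson kernel of $M$ at $p$, associated to the form $\kappa_p$ for which $\theta_{\kappa_p}(w)=\langle w,\nu_p\rangle$. By Proposition~\ref{ovvia} the two Poisson kernels $\Omega^\alpha_{M,p}$ and $\Omega_{M,p}$ are proportional, say $\Omega^\alpha_{M,p}=m\,\Omega_{M,p}$ with $m>0$. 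To pin down $m$, I would pick the CHL complex geodesic $\psi:\D\to M$ with $\psi(1)=p$ and $\psi'(1)=\nu_p$, which exists by Lempert's theorem in a strongly convex domain. The restriction $\psi|_{[0,1]}$ belongs to $\Gamma_p$, so Proposition~\ref{prop-Omega}(3) yields
\[
\lim_{r\to 1^-}\Omega^\alpha_{M,p}(\psi(r))(1-r)=-2\,\Re\frac{1}{\theta(\nu_p)}\quad\text{and}\quad\lim_{r\to 1^-}\Omega_{M,p}(\psi(r))(1-r)=-2,
\]
forcing $m=\Re(1/\theta(\nu_p))$; this is positive because $\Re\theta(\nu_p)=\alpha(J\nu_p)>0$, as $J\nu_p\in K^+$. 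Now \cite[Corollary 5.3]{BPT} applied to the standard Poisson kernel of $M$ gives
\[
\lim_{n\to\infty}\Omega_{M,p}(z_n)\,|\langle p-z_n,\nu_p\rangle|=-2\,\frac{\Re\langle v,\nu_p\rangle}{|\langle v,\nu_p\rangle|},
\]
and multiplying by $m$ produces \eqref{Eq:stef-serve} for $M\in\{B,W\}$.

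\textbf{Pseudoconvex case ($M=D$).} Since $B\cap U=D\cap U$ for the open neighborhood $U$ of $p$ from hypothesis (H1), the condition $z_n\to p$ ensures $z_n\in B$ for large $n$, so by \eqref{Eq:dineq}
\[
\Omega^\alpha_{W,p}(z_n)\le\Omega^\alpha_{D,p}(z_n)\le\Omega^\alpha_{B,p}(z_n).
\]
Multiplying by the non-negative quantity $|\langle p-z_n,\nu_p\rangle|$ preserves these inequalities, and by the convex case just proved both outer terms tend to the common limit $-2\Re(1/\theta(\nu_p))\,\Re\langle v,\nu_p\rangle/|\langle v,\nu_p\rangle|$. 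The squeeze theorem then forces the middle expression to converge to the same value.

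The main subtle step is identifying the proportionality constant $m=\Re(1/\theta(\nu_p))$ in the convex case, which is handled by testing both Poisson kernels on the ``normal'' geodesic $\psi$ and comparing their curve-limits via Proposition~\ref{prop-Omega}(3); once this identification is in place, \cite[Corollary 5.3]{BPT} together with the sandwich coming from \eqref{Eq:dineq} yield the pseudoconvex case directly.
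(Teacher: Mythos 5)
Your reduction to the strongly convex case via the sandwich inequalities \eqref{Eq:dineq} matches the paper exactly, and your identification of the proportionality constant $m=\Re(1/\theta(\nu_p))$ via the normal CHL geodesic is correct and cleanly argued (indeed $\theta(\nu_p)=1/\langle\varphi_0'(1),\nu_p\rangle$ is real and positive because $\varphi_0$ maps $\partial\D$ into $\partial B$, so $\langle\varphi_0'(1),\nu_p\rangle$ is real by tangency, and positive by Hopf). That part is a genuinely tidy way to convert between the $\alpha$-kernel and the standard kernel.

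However, there is a real gap in the convex-domain step. You invoke \cite[Corollary 5.3]{BPT} as if it already gives the limit of $\Omega_{M,p}(z_n)\,|\langle p-z_n,\nu_p\rangle|$ along an \emph{arbitrary} sequence $z_n\to p$ with $\frac{p-z_n}{|p-z_n|}\to v\notin A^\C$. But as the paper itself cites it (equation \eqref{convo} in the proof of Lemma~\ref{palla-equiv}), Corollary 5.3 of \cite{BPT} is a statement about limits along \emph{smooth curves} $\gamma\in\Gamma_p$, i.e.\ curves through $p$ with $\gamma'(1)\notin T_p\partial D$. A sequence $\{z_n\}$ with limit direction $v$ need not lie on any such curve, and the limit direction may even be tangential ($v\in A\setminus A^\C$), a case $\Gamma_p$ does not touch. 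So the step ``Cor.\ 5.3 applied to $\Omega_{M,p}$ gives the sequence formula'' is precisely the claim the proposition is proving, not something you may quote. This is visible from the structure of the paper: if Cor.\ 5.3 already covered sequences, the authors could have disposed of the convex case in one line, yet their proof for $B$ and $W$ is the bulk of the argument. What they actually do, and what is missing from your write-up, is to take the CHL complex geodesics $\varphi_n$ through $p$ and $z_n$, pass to a subsequence, rule out the degenerate case $\varphi_n(\overline\D)\to\{p\}$ using the new Lemma~\ref{Lem:tgc} (this is exactly where the hypothesis $v\notin A^\C$ is used), obtain $C^1$-convergence of $\{\varphi_n\}$ to a CHL geodesic $\varphi$ via Huang's results, and then compute the limit directly from the restriction formula \eqref{Eq:a-v} and the Fundamental Theorem of Calculus. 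Your proposal omits all of this, so the convex case is not actually established.
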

\begin{proof}
In view of \eqref{Eq:dineq}, it is enough to prove the formula for $B$ and $W$. We prove it for $B$, being the proof exactly the same for $W$.

 For each $n$ let $\varphi_n$ be a CHL complex geodesic (see Definition~\ref{Def:CHL}) such that $\varphi_n(1)=p$ and $\varphi_n(\zeta_n)=z_n$ for some $\zeta_n\in \D$. Up to subsequences, we can assume that $\beta=\lim_{n\to \infty}\frac{1-\zeta_n}{|1-\zeta_n|}$ exists. 

Up to extracting further subsequences, we may also assume that $\{\varphi_n\}$ converges uniformly on compacta to a holomorphic map $\varphi:\D \to \overline{B}$. Moreover, since $B$ is strongly (pseudo)convex, then, either $\varphi(\D)\subset B$ or there exists a point $q\in \partial B$ such that $\varphi(\D)=q$. In the latter case, since $z_n\in \varphi_n(\D)$ and $z_n\to p$, by \cite[Prop. 2.3]{BFW}, it follows that $q=p$ and that for every neighborhood $Q$ of $p$ there exists $n_Q$ such that $\varphi_n(\oD)\subset Q$ for all $n>n_Q$. Hence, by Lemma~\ref{Lem:tgc}, 
\[
\lim_{n\to \infty}\frac{\langle z_n-p, \nu_p\rangle}{|z_n-p|}=\lim_{n\to \infty}\frac{\langle \varphi_n(\zeta_n)-p, \nu_p\rangle}{|\varphi_n(\zeta_n)-p|}=0,
\] 
which implies $v\in A^\C$, a contradiction. Therefore, $\varphi(\D)\subset B$.

Now,  since the $\varphi_n$'s  are isometries between the hyperbolic distance in $\D$ and the Kobayashi distance in $B$, then $\varphi$ is a complex geodesic in $B$ and by \cite[Prop. 1]{H} (see also \cite[Section 2]{BPT}), $\{\varphi_n\}$  converges   in the $C^1$-topology  on $\overline{\D}$ to $\varphi$.

Write $(\varphi_n)'(1)=\langle v_n, {\nu_p}\rangle v_n$ for some $v_n\in L_p$. In particular notice that, since $\langle v_n, {\nu_p}\rangle>0$ by definition, we have $\Re \langle (\varphi_n)'(1),\nu_p\rangle=\langle (\varphi_n)'(1),\nu_p\rangle>0$ for all $n$. Since by Hopf's Lemma, $\Re \langle \varphi'(1), {\nu_p}\rangle\neq 0$, it follows easily from the previous considerations that $\varphi$ is CHL and hence $\varphi'(1)=\langle \tilde v, \nu_p\rangle \tilde v$ for some $\tilde v\in L_p$ so that $\lim_{n\to \infty}v_n=\tilde v$. 
By \eqref{Eq:a-v}, recalling that $\theta=d\tilde\rho_p$ where $\tilde\rho$ is the left-inverse of the chosen compatible geodesic $\varphi_0$,
\begin{equation*}
\begin{split}
\Omega^\alpha_{B,p}(z_n)|\langle p-z_n, \nu_p\rangle |& =\Omega^\alpha_{B,p}(\varphi_n(\zeta_n))|\langle p-z_n, \nu_p\rangle |\\&=-\Re\left(\frac{1}{\theta(\varphi_n'(1))}  \right) \Re\left(\frac{(1+\zeta_n)|1-\zeta_n|}{1-\zeta_n}\right) \left|\langle \frac{p-z_n}{1-\zeta_n}, \nu_p\rangle \right|.
\end{split}
\end{equation*}
By the Fundamental Theorem of Calculus,
\begin{equation*}
\frac{p-z_n}{1-\zeta_n}=\frac{\varphi_n(1)-\varphi_n(\zeta_n)}{1-\zeta_n}=\int_0^1 (\varphi_n)'((1-\zeta_n)t+\zeta_n) dt,
\end{equation*}
and, since the integrand converges uniformly to $\varphi'(1)$, and taking into account that $\theta(\varphi'(1))= (\langle \tilde v, \nu_p\rangle)^2 \theta(\nu_p)$---since $\theta|_{A^\C}=0$,
the previous equation implies that
\begin{equation}\label{Poi-beta-g}
\lim_{n\to\infty}\Omega^\alpha_{B,p}(z_n)|\langle p-z_n, \nu_p\rangle | =-2 \Re\left(\frac{1}{\theta(\nu_p)}\right)\Re \beta.
\end{equation}
On the other hand, 
\begin{equation*}
\langle v,\nu_p\rangle=\lim_{n\to \infty}\left[\frac{1-\zeta_n}{|1-\zeta_n|}\frac{|1-\zeta_n|}{|\varphi_n(1)-\varphi_n(\zeta_n)|}\, \langle \frac{\varphi_n(1)-\varphi_n(\zeta_n)}{1-\zeta_n}, \nu_p \rangle\right]=\beta \langle \tilde v, \nu_p\rangle.
\end{equation*}
Putting together \eqref{Poi-beta-g} and the previous formula, one easily sees that \eqref{Eq:stef-serve} holds.
\end{proof}

\begin{lemma}\label{Lemma:M1}
Let $\{z_n\}\subset B$ be a sequence converging to $p$ such that $v:=\lim_{n\to \infty}\frac{p-z_n}{|p-z_n|}$ exists. If either $v\in A^\C$ or $|v^N|<\frac{\epsilon}{2} |v^T|$, where $\epsilon>0$ is given by Lemma~\ref{Lemma:geo}, then there exists $n_0$ such that for all $n>n_0$
\[
\frac{\Omega^\alpha_{W,p}(z_n)}{\Omega^\alpha_{B,p}(z_n)}=1.
\]
In particular, for all $n>n_0$,
\[
\Omega^\alpha_{D,p}(z_n)=\Omega^\alpha_{B,p}(z_n)=\Omega^\alpha_{W,p}(z_n).
\]
\end{lemma}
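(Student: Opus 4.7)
My strategy is to produce, for each large $n$, a single complex geodesic $\eta_n$ of $W$ with $\eta_n(1)=p$ passing through $z_n$ and which is compatible. Once this is achieved, Lemma~\ref{Lemma:geo} guarantees that $\eta_n$ is also a complex geodesic of $B$ and, by Remark~\ref{Rem:Lempert-un-grande}, the left-inverse $\tilde\rho$ of the reference compatible geodesic $\varphi_0$ in $W$ restricts to the corresponding left-inverse in $B$, so $\theta=d\tilde\rho_p$ is the same linear form in both settings. Applying \eqref{Eq:a-v} to $\eta_n$ in $W$ and in $B$ yields
\[
\Omega^\alpha_{W,p}(z_n)=\Re\!\Bigl(\frac{1}{\theta(\eta_n'(1))}\Bigr)\,P_\D(\zeta_n)=\Omega^\alpha_{B,p}(z_n),
\]
where $z_n=\eta_n(\zeta_n)$; combined with the sandwich \eqref{Eq:dineq}, this forces $\Omega^\alpha_{D,p}(z_n)$ to coincide as well.

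Concretely, for each large $n$ I take $\eta_n\colon\D\to W$ to be the CHL complex geodesic of $W$ with $\eta_n(1)=p$ passing through $z_n=\eta_n(\zeta_n)$, and write $\eta_n'(1)=\langle v_n,\nu_p\rangle v_n$ with $v_n\in L_p$. The heart of the proof is the claim that $v_n$ is eventually compatible, i.e.\ $|v_n^N|<\epsilon|v_n^T|$ with $\epsilon$ as in Lemma~\ref{Lemma:geo}. I argue by contradiction: passing to a subsequence, assume $|v_n^N|\geq \epsilon|v_n^T|$, and by compactness of the unit sphere assume further $v_n\to\tilde v$ with $|\tilde v^N|\geq \epsilon|\tilde v^T|$; since $\tilde v\in \overline{L_p}$, we have $\langle\tilde v,\nu_p\rangle\geq 0$, and I split on whether this quantity is positive or zero.

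If $\langle\tilde v,\nu_p\rangle>0$, then by \cite[Prop.~1]{H} (used exactly as in Proposition~\ref{tg-lim}) $\eta_n\to\eta$ in $C^1(\overline\D)$ with $\eta$ a complex geodesic of $W$ and $\eta'(1)=\langle\tilde v,\nu_p\rangle\tilde v$. The Fundamental Theorem of Calculus computation that follows \eqref{Poi-beta-g} then gives $v=\beta\tilde v$ for some unit $\beta\in\C$, whence $|v^N|/|v^T|=|\tilde v^N|/|\tilde v^T|\geq \epsilon$, contradicting both hypotheses on $v$ (the condition $v\in A^\C$ forces the ratio to be $0$, while $|v^N|<(\epsilon/2)|v^T|$ bounds it by $\epsilon/2$). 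If $\langle\tilde v,\nu_p\rangle=0$ instead, then $\eta_n'(1)\to 0$ and Hopf's Lemma rules out a non-constant geodesic limit; any sub-limit of $\{\eta_n\}$ is therefore a constant $q\in\partial W$, and since $\eta_n(1)=p$ with $z_n=\eta_n(\zeta_n)\to p$, \cite[Prop.~2.3]{BFW} (applied as in Proposition~\ref{tg-lim}) forces $q=p$ and $\eta_n(\overline\D)$ to shrink uniformly to $\{p\}$. Set $\delta_n:=\max_{\overline\D}|\eta_n|$ in Huang's local coordinates (as in Lemma~\ref{Lem:tgc}); then $\delta_n\to 0$, and Huang's uniform expansion \eqref{Eq:Huang} applied to $\eta_n$ shows that the last (normal) coordinate of $\eta_n'(1)/|\gamma_n|$ is $O(\delta_n)$ while the tangential ones remain bounded below, yielding $|v_n^N|/|v_n^T|=O(\delta_n)\to 0$, again contradicting $|v_n^N|\geq \epsilon|v_n^T|$.

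The real technical point, and the place I expect to put most of the work, is this degenerate sub-case $\langle\tilde v,\nu_p\rangle=0$, which arises naturally when $v\in A^\C$: the geodesics $\eta_n$ collapse to $p$ and compatibility of $v_n$ must be extracted from Huang's uniform expansion \eqref{Eq:Huang} rather than from a limit geodesic. Care is required to verify that the decomposition into normal and tangential parts in Huang's adapted coordinates matches the decomposition $v_n=v_n^N+v_n^T$ in the original Hermitian coordinates at $p$, and to control the size of the normalizing factor $|\gamma_n|$ relative to $|v_n^T|$ so as to conclude the desired ratio estimate.
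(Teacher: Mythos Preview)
Your approach is essentially the paper's: pass a complex geodesic of one of the convex models through $p$ and $z_n$, show it is compatible for large $n$, and then invoke \eqref{Eq:a-v} simultaneously in $B$ and $W$ (the paper does this in one terse sentence, taking the geodesic in $B$ with $\varphi_n(0)=z_n$; you take the CHL geodesic in $W$ and supply the compatibility argument the paper omits).

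However, your ``real technical point'' --- the degenerate Case~2 --- never occurs, so the Huang--expansion analysis you plan to develop is unnecessary. Indeed, since $v_n\in L_p$ one has $\langle v_n,\nu_p\rangle>0$ real, and $v_n^N$ is precisely the component of $v_n$ along $\C\nu_p$, so $|v_n^N|=\langle v_n,\nu_p\rangle$. Your contradiction hypothesis $|v_n^N|\geq\epsilon|v_n^T|$ together with $|v_n|=1$ then forces
\[
\langle v_n,\nu_p\rangle=|v_n^N|\geq \frac{\epsilon}{\sqrt{1+\epsilon^2}}>0,
\]
and hence $\langle\tilde v,\nu_p\rangle>0$ in the limit: you are always in Case~1. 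There, $\tilde v\in L_p$, and the continuous dependence of CHL geodesics on the direction $v\in L_p$ (as used in \cite[Section~2]{BPT} and in the proof of Proposition~\ref{tg-lim}) gives $\eta_n\to\eta_{\tilde v}$ in $C^1(\overline\D)$ directly, with $\eta_{\tilde v}$ a genuine geodesic; your FTC computation then yields $v=\beta\tilde v$ and the contradiction. The coordinate-matching issues you flag for Case~2 therefore do not arise.
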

\begin{proof}
It is clear that the second equation follows at once from the first one and from \eqref{Eq:dineq}. In order to prove the first equation, let $\{z_n\}\subset B$ be a sequence converging to $p$.  By hypothesis, the complex geodesic $\varphi_n:\D \to B$ such that $\varphi_n(1)=p$ and $\varphi_n(0)=z_n$ eventually satisfies $|[\varphi_n'(0)]^N|\leq \epsilon |[\varphi_n'(0)]^T|$. Therefore, $\varphi_n$ is a complex geodesic for both $B$ and $W$ eventually by Lemma~\ref{Lemma:geo}.  Thus, by \eqref{Eq:a-v},
$\Omega_{B,p}^\alpha(\varphi_n(\zeta))=\Omega_{W,p}^\alpha(\varphi_n(\zeta))$ for $n$ large, and for all $\zeta\in \D$, and we are done.
\end{proof}

The argument used in the previous proof allows us to prove:
\begin{proposition}\label{Prop:lle}
For all compatible complex geodesics $\varphi$ and $\zeta\in \D$,
\begin{equation}\label{Eq:restr-PD}
\Omega^\alpha_{D,p}(\varphi(\zeta))=\Re\left(\frac{1}{\theta(\varphi'(1))}\right) P_\D(\zeta).
\end{equation}
In particular, there exists an open set $J$ whose closure contains $p$ such that $\Omega^\alpha_{D,p}$ is $C^\infty$-smooth on $J$, $(dd^c\Omega^\alpha_{D,p})^{n-1}\neq 0$ on $J$ and the associated Monge-Amp\'ere foliation on $J$ is formed by complex geodesics for $D$, which extend smoothly up to the boundary of $\D$, and have an associated  holomorphic retraction with affine fibers.
 \end{proposition}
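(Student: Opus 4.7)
The first assertion will follow directly from the earlier machinery. Given a compatible complex geodesic $\varphi$, the unified-terminology lemma preceding the proposition says that $\varphi$ is simultaneously a complex geodesic in $B$, in $D$ and in $W$. The formula \eqref{Eq:a-v}, applied in the two strongly convex sandwiching domains $B$ and $W$, then gives
\[
\Omega^\alpha_{B,p}(\varphi(\zeta))=\Re\!\left(\tfrac{1}{\theta(\varphi'(1))}\right)P_\D(\zeta)=\Omega^\alpha_{W,p}(\varphi(\zeta))
\]
for every $\zeta\in\D$, and the sandwich \eqref{Eq:dineq} squeezes $\Omega^\alpha_{D,p}(\varphi(\zeta))$ to the same value, proving \eqref{Eq:restr-PD}.

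For the second assertion, I would define
\[
J:=\bigcup_{\varphi}\varphi(\D),
\]
where $\varphi$ ranges over all compatible complex geodesics. That $p\in\overline{J}$ is immediate from $\varphi(1)=p$. Openness of $J$ I would derive from standard Lempert-type smoothness: the compatibility condition $|v^N|<\varepsilon|v^T|$ cuts out an open set of directions at $p$; CHL complex geodesics of $W$ depend smoothly on this direction; and the evaluation $(v,\zeta)\mapsto \varphi_v(\zeta)$ has full rank at each interior point, because the $\zeta$-direction recovers $\varphi_v'(\zeta)$ while varying $v$ sweeps the directions transversal to $\varphi_v(\D)$ inside the Lempert retraction fibers.

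By the first part, at every $z\in J$ one has $\Omega^\alpha_{D,p}(z)=\Omega^\alpha_{W,p}(z)$, so the two functions coincide on $J$. Since $\Omega^\alpha_{W,p}\in C^\infty(\overline{W}\setminus\{p\})$ with $(dd^c\Omega^\alpha_{W,p})^{n-1}\neq 0$ on the complement of the pole (classical properties of the Poisson kernel of a strongly convex domain, cf.\ \cite{B-P,BPT}), the same regularity and non-degeneracy transfer to $\Omega^\alpha_{D,p}$ on $J$. The Monge-Amp\`ere foliation of $\Omega^\alpha_{W,p}$ on $J$ consists of the complex geodesics of $W$ through $p$ meeting $J$, i.e.\ the compatible ones, which by Lemma~\ref{Lemma:geo} are also complex geodesics of $D$ equipped with a holomorphic retraction onto them with affine fibers, namely the restriction of the Lempert projection of $W$ to $D$. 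The only delicate step is the openness of $J$; everything else amounts to transferring well-known facts from the strongly convex domain $W$ to $D$ through the sandwich \eqref{Eq:dineq}.
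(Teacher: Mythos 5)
Your proof is essentially the same as the paper's. For the first assertion you apply \eqref{Eq:a-v} in both $B$ and $W$ and squeeze with \eqref{Eq:dineq}; the paper does precisely this ("follows at once from the previous proof and equations \eqref{Eq:dineq} and \eqref{Eq:a-v}"). For the second assertion you take $J$ to be the union of the images of compatible geodesics and transfer the smoothness, nondegeneracy and foliation properties from the strongly convex companion domain; again this matches the paper.

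The one place where your argument is thinner than the paper's is the openness of $J$. You offer a full-rank heuristic for $(v,\zeta)\mapsto\varphi_v(\zeta)$, invoking "standard Lempert-type smoothness" and a count of transversal directions in the Lempert retraction fibers. That sketch could be made precise, but it silently relies on the smooth dependence of CHL complex geodesics on their boundary data and on the injectivity of the evaluation map, neither of which you establish. The paper avoids this by simply citing \cite[Corollary 2.3]{BPT}, which states outright that $L_p\times\D\ni(v,\zeta)\mapsto\varphi_v(\zeta)$ is a homeomorphism onto its image; restricting to the open set of compatible directions then yields an open $J$ immediately. (A minor cosmetic difference: the paper parametrizes $J$ by directions with $|v^N|<\tfrac{\epsilon}{2}|v^T|$ rather than the full compatible range $|v^N|<\epsilon|v^T|$, but both sets are open, so this does not affect the conclusion.) I'd recommend replacing your rank heuristic with the citation to \cite[Corollary 2.3]{BPT}, after which the proof is complete.
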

 \begin{proof}
 The first statement follows at once from the previous proof and equations \eqref{Eq:dineq} and \eqref{Eq:a-v}. 

The set $G:=\{v\in L_p: |v^N|<\frac{\epsilon}{2}|v^T|\}$,  is open in $L_p$ (see \eqref{Eq:Lp}).  The map $G\times \D \ni (v,\zeta)\mapsto \varphi_v(\zeta)$ is a homeomorphism onto its image, which is thus an open set $J$ (see \cite[Corollary 2.3]{BPT}). Hence,  by \eqref{Eq:a-v} and \eqref{Eq:restr-PD}, we have $\Omega_{D,p}^\alpha(z)=\Omega_{B,p}^\alpha(z)$ for all $z\in J$ and we are done. \end{proof}

 \begin{proposition}\label{Prop:M2}
The following hold:
\[
\lim_{z\to p}\frac{\Omega^\alpha_{W,p}(z)}{\Omega^\alpha_{B,p}(z)}=1.
\]
In particular,
\[
\lim_{z\to p}\frac{\Omega^\alpha_{D,p}(z)}{\Omega^\alpha_{B,p}(z)}=\limsup_{z\to p}\frac{\Omega^\alpha_{D,p}(z)}{\Omega^\alpha_{W,p}(z)}=1.
\]
\end{proposition}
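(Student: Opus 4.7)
The plan is to first establish the main limit $\lim_{z\to p}\Omega^\alpha_{W,p}(z)/\Omega^\alpha_{B,p}(z)=1$ and then to deduce the remaining two statements by a squeeze from the chain \eqref{Eq:dineq}. Since $\Omega^\alpha_{W,p}\le\Omega^\alpha_{D,p}\le\Omega^\alpha_{B,p}<0$ on $B$, dividing by the negative $\Omega^\alpha_{B,p}$ reverses the inequalities and yields
\[
1\le\frac{\Omega^\alpha_{D,p}(z)}{\Omega^\alpha_{B,p}(z)}\le\frac{\Omega^\alpha_{W,p}(z)}{\Omega^\alpha_{B,p}(z)}\quad\text{and}\quad\frac{\Omega^\alpha_{D,p}(z)}{\Omega^\alpha_{W,p}(z)}\le 1.
\]
Once the rightmost ratio in the first inequality tends to $1$, the middle ratio is squeezed to $1$; and then writing $\Omega^\alpha_{D,p}/\Omega^\alpha_{W,p}=(\Omega^\alpha_{D,p}/\Omega^\alpha_{B,p})/(\Omega^\alpha_{W,p}/\Omega^\alpha_{B,p})$ shows that this last ratio also tends to $1$, so in particular its $\limsup$ equals $1$.

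For the main limit, since the lower bound $\Omega^\alpha_{W,p}/\Omega^\alpha_{B,p}\ge 1$ is automatic, I would show $\limsup_{z\to p}\Omega^\alpha_{W,p}(z)/\Omega^\alpha_{B,p}(z)\le 1$ by a sequence argument. Pick an arbitrary $\{z_n\}\subset B$ with $z_n\to p$ and, after passing to a subsequence, set $v:=\lim_n(p-z_n)/|p-z_n|$. If $v\in A^\C$ or $|v^N|<(\epsilon/2)|v^T|$, then Lemma~\ref{Lemma:M1} directly gives $\Omega^\alpha_{W,p}(z_n)=\Omega^\alpha_{B,p}(z_n)$ for all large $n$, so the ratio equals $1$ eventually. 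Otherwise $v\notin A^\C$, and I would invoke Proposition~\ref{tg-lim} for both $M=B$ and $M=W$: since $B,D,W$ share the tangent hyperplane $A$ at $p$ (hence the common outer unit normal $\nu_p$) and since the compatible defining couple $(\alpha,\theta)$ was constructed from a compatible geodesic $\varphi_0$ in $B\subset W$, Proposition~\ref{tg-lim} yields the same limit
\[
\lim_n\Omega^\alpha_{M,p}(z_n)\,|\langle p-z_n,\nu_p\rangle|=-2\Re\bigl(1/\theta(\nu_p)\bigr)\,\frac{\Re\langle v,\nu_p\rangle}{|\langle v,\nu_p\rangle|}
\]
for $M\in\{B,W\}$. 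When this common value is nonzero (i.e.\ when $v\notin A$), dividing the two identities gives the ratio $\to 1$ at once.

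The main obstacle is the intersection of the two failures: when $v\in A\setminus A^\C$ (so $\Re\langle v,\nu_p\rangle=0$ and the common limit above vanishes) and simultaneously $|v^N|\ge(\epsilon/2)|v^T|$ (so Lemma~\ref{Lemma:M1} does not apply). In this delicate tangential, non-complex-tangential regime I would scrutinize the CHL complex geodesics $\varphi_n^B$ and $\varphi_n^W$ joining $z_n$ to $p$ in $B$ and $W$: by the argument of Proposition~\ref{tg-lim} both sequences converge in $C^1(\oD)$ to CHL geodesics whose limiting directions $\tilde v\in L_p$ satisfy $\langle\tilde v,\nu_p\rangle=|\langle v,\nu_p\rangle|$. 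The decisive step is then to exploit the local coincidence of $B$ and $W$ in $U$, via Huang's preservation principle packaged in Lemma~\ref{Lemma:geo}, to conclude that $\varphi_n^B=\varphi_n^W$ eventually, so that \eqref{Eq:a-v} forces $\Omega^\alpha_{W,p}(z_n)=\Omega^\alpha_{B,p}(z_n)$ and the ratio equals $1$ along this remaining subsequence as well.
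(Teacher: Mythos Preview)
Your reduction of the ``in particular'' part to the main limit via the chain \eqref{Eq:dineq} is fine and matches the paper. Your treatment of the cases $v\in A^\C$ or $|v^N|<(\epsilon/2)|v^T|$ (Lemma~\ref{Lemma:M1}) and $v\notin A$ (Proposition~\ref{tg-lim}) is also correct and is exactly what the paper does.

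The genuine gap is in the remaining case $v\in A\setminus A^\C$ with $|v^N|\ge(\epsilon/2)|v^T|$. Your ``decisive step'' asserts that Lemma~\ref{Lemma:geo} forces $\varphi_n^B=\varphi_n^W$ eventually, but Lemma~\ref{Lemma:geo} only applies to geodesics whose direction at $p$ is \emph{compatible}, i.e.\ satisfies $|(\cdot)^N|<\epsilon|(\cdot)^T|$. The limiting CHL direction $\tilde v$ is a complex multiple of $v$ (as the paper shows, $v=\lambda\langle\tilde v,\nu_p\rangle\tilde v$), so $|\tilde v^N|/|\tilde v^T|=|v^N|/|v^T|$. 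In the subcase $|v^N|/|v^T|\ge\epsilon$ the limiting direction is \emph{not} compatible, and there is no reason for the $W$-geodesics $\varphi_n^W$ to stay inside $U$ (they may well leave the region where $B$ and $W$ coincide), so you cannot conclude that $\varphi_n^W$ is a $B$-geodesic or that $\varphi_n^B=\varphi_n^W$. Huang's preservation principle gives nothing here.

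The paper does not attempt to identify the two geodesics. Instead, for $v\in A\setminus A^\C$ it proves only $v^B=v^W$ for the limiting directions, rewrites the ratio as $P_\D(\zeta_n^W)/P_\D(\zeta_n^B)$ via \eqref{Eq:a-v}, and then controls this quotient by composing $\varphi_n^W$ with the left-inverse $\tilde\rho_n^B$ of $\varphi_n^B$ (restricted through a Riemann map $g$ of a lens region so that the composition lands in $\D$). The classical Julia--Wolff--Carath\'eodory inequality in $\D$ applied to $f_n=\tilde\rho_n^B\circ\varphi_n^W\circ g$ yields the needed upper bound, and the $C^1$-convergence of $\tilde\rho_n^B$ and $\varphi_n^W$ gives $f_n'(1)\to 1$. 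This is the missing idea you need to close the gap; the equality of geodesics that you propose is simply not available in general for these non-compatible tangential directions.
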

\begin{proof}
Again, the second formula follows immediately from the first one and from  \eqref{Eq:dineq}. By Lemma~\ref{Lemma:M1}, we are left to consider sequences $\{z_n\}\subset B$ such that $\frac{p-z_n}{|p-z_n|}\to v$ with either $v\not\in A$ or $v\in A\setminus A^\C$. 

\medskip

{\sl Case $v\not\in A$}. By \eqref{tg-lim}, and taking into account that $\Re \langle v, \nu_p\rangle >0$ (since $v\not\in A$), we have
\[
\lim_{n\to \infty}\frac{\Omega^\alpha_{B,p}(z_n)}{\Omega^\alpha_{W,p}(z_n)}=\lim_{n\to \infty}\frac{\Omega^\alpha_{B,p}(z_n)}{\Omega^\alpha_{W,p}(z_n)}\frac{|\langle p-z_n, \nu_p\rangle |}{|\langle p-z_n, \nu_p\rangle |}=1,
\]
and we are done.

\medskip
{\sl Case $v\in A\setminus A^\C$}.   Let $\{z_n\}\subset B$ be such that $\frac{p-z_n}{|p-z_n|}\to v$ with $v\in A\setminus A^\C$. 

Let $\varphi_n^B:\D \to B$ be the CHL complex geodesic in $B$ such that $\varphi_n^B(1)=p$  and $\varphi_n^B(\zeta_n^B)=z_n$ for some $\zeta_n^B\in \D$ and similarly let $\varphi_n^W:\D \to W$ be the CHL complex geodesic in $W$ such that $\varphi_n^W(1)=p$ and $\varphi_n^W(\zeta_n^W)=z_n$ for some $\zeta_n^W\in \D$. 

Up to extracting subsequences, we may assume that $\{\varphi_n^B\}$ converges uniformly on compacta to a holomorphic map $\varphi^B:\D \to \overline{B}$ and $\{\varphi_n^W\}$ converges uniformly on compacta to a holomorphic map $\varphi^W:\D \to \overline{W}$. Arguing as in the proof of Proposition~\ref{tg-lim}, one can see that $\varphi^B$ and  $\varphi^W$ are CHL complex geodesics  and  $\{(\varphi^B_n)\}$  converges to $\varphi^B$ while $\{(\varphi^W_n)\}$ converges  to $\varphi^W$ in the $C^1$-topology on $\overline{\D}$. 

In particular, we can write $(\varphi^B)'(1)=\langle v^B, \nu_p\rangle v^B$ and $(\varphi^W)'(1)=\langle v^W, \nu_p\rangle v^W$ for some $v^B, v^W\in L_p$. 

Using the Chang-Hu-Lee spherical representation \cite{CHL} (see also \cite[Section 1]{BPT}), it is easy to see that $\lim_{n\to \infty}\zeta_n^B=\lim_{n\to \infty}\zeta_n^W=1$. By the Fundamental Theorem of Calculus,
\[
\frac{p-z_n}{|p-z_n|}=\frac{1-\zeta_n^B}{|p-z_n|}\int_0^1 (\varphi^B_n)'((1-\zeta_n^B)t+\zeta_n^B) dt.
\]
Since the integrand converges uniformly in $n$ to $(\varphi^B)'(1)=\langle v^B,\nu_p\rangle v^B$ and the left hand side converges to $v$, it follows that $\frac{1-\zeta_n^B}{|p-z_n|}$ converges to some $\lambda^B\neq 0$, as $n\to \infty$. A similar argument for $\varphi_n^W$ shows that
\[
\lambda^B \langle v^B,\nu_p\rangle v^B=v =\lambda^W \langle v^W,\nu_p\rangle v^W.
\]
Bearing in mind that $v^B, v^W\in L_p$, this implies immediately that $v^B=v^W$. 

In particular, $\varphi^B(1)=\varphi^W(1)=p$ and $(\varphi^B)'(1)=(\varphi^W)'(1)$. By \eqref{Eq:a-v} and \eqref{Eq:dineq}, we have
\begin{equation}\label{Eq:P-po}
1\leq \frac{\Omega_{W,p}^\alpha(z_n)}{\Omega_{B,p}^\alpha(z_n)}=\frac{P_\D(\zeta_n^W)}{P_\D(\zeta_n^B)}.
\end{equation}
Let $\tilde\rho^B_n$ be the left-inverse of $\varphi_n^B$. By \cite[Lemma 3.5]{BPT}, $\{\tilde\rho_n^B\}$ converges in the $C^1$-topology on $\overline{B}$ to the left-inverse $\tilde\rho^B$ of $\varphi^B$. 

Since $B$ and $W$ have common boundary near $p$, there exists $r\in (0,1)$ so that $Q:=\D\cap \{\zeta\in \C: |\zeta-1|<r\}\subset (\varphi_n^W)^{-1}(B\cap \varphi_n^W(\D))$ for all $n$. We can assume that $\zeta_n^W\in Q$ for all $n$. Let $g:\D\to Q$ be a Riemann map. Note that since $\partial Q=\partial \D$ near $1$, we can assume that $g$ extends holomorphically through $\partial \D$ near $1$, $g(1)=1$ and (up to pre-composing $g$ with a hyperbolic automorphism of $\D$ fixing $1$) that $g'(1)=1$. Let $\tilde\zeta_n^W:=g^{-1}(\zeta_n^W)$. Note that $\tilde\zeta_n^W\to 1$ as $n\to \infty$. Let $f_n:=\tilde\rho_n^B\circ \varphi_n^W\circ g: \D \to \D$. 

By construction, $f_n$ is smooth up to $\partial \D$ close to $1$, $f_n(1)=1$ and $f_n(\tilde\zeta_n^W)=\zeta_n^B$. By the classical Julia's Lemma  and the classical Julia-Wolff-Carath\'eodory Theorem  (see, {\sl e.g.}, \cite[Thm. 1.2.5 and Thm. 1.2.7]{Ababook} or \cite[Thm. 1.7.3 and Lemma 1.4.5]{BCDbook}), $f_n'(1)>0$ and for all $\zeta\in \D$,
\begin{equation}\label{Julia-classn}
\frac{|1-f_n(\zeta)|^2}{1-|f_n(\zeta)|^2}\leq f_n'(1) \frac{|1-\zeta|^2}{1-|\zeta|^2}.
\end{equation}

Moreover, $\{f_n\}$ converges in the $C^1$-topology on $\oD$ to $f:=\tilde\rho^B\circ \varphi^W\circ g$. Note that $f'(1)=d\tilde\rho^B_p((\varphi^W)'(1))g'(1)=d\tilde\rho^B_p((\varphi^B)'(1))=1$. 

Hence, by \eqref{Julia-classn},
\begin{equation*}
\begin{split}
\frac{P_\D(\zeta_n^W)}{P_\D(\zeta_n^B)}&=\frac{P_\D(g(\tilde\zeta_n^W))}{P_\D(f_n(\tilde\zeta_n^W))}=\frac{1-|g(\tilde\zeta_n^W)|^2}{|1-g(\tilde\zeta_n^W)|^2}\frac{|1-\tilde\zeta_n^W|^2}{1-|\tilde\zeta_n^W|^2}\frac{1-|\tilde\zeta_n^W|^2}{|1-\tilde\zeta_n^W|^2}\frac{|1-f_n(\tilde\zeta_n^W)|^2}{1-|f_n(\tilde\zeta_n^W)|^2}\\ &\leq f'_n(1)\frac{1-|g(\tilde\zeta_n^W)|^2}{|1-g(\tilde\zeta_n^W)|^2}\frac{|1-\tilde\zeta_n^W|^2}{1-|\tilde\zeta_n^W|^2}.
\end{split}
\end{equation*}
Therefore, since $f_n'(1)\to f'(1)=1$ as $n\to \infty$, 
\[
\limsup_{n\to \infty}\frac{P_\D(\zeta_n^W)}{P_\D(\zeta_n^B)}\leq \limsup_{n\to \infty}\frac{1-|g(\tilde\zeta_n^W)|^2}{|1-g(\tilde\zeta_n^W)|^2}\frac{|1-\tilde\zeta_n^W|^2}{1-|\tilde\zeta_n^W|^2}. 
\]

Since $g$ extends holomorphically near $1$  and $g'(1)=1$, we see that 
\[
\lim_{n\to \infty}\frac{|1-\tilde\zeta_n^W|^2}{|1-g(\tilde\zeta_n^W)|^2}=1.
\]
On the other hand, in order to compute $\lim_{n\to \infty}\frac{1-|g(\tilde\zeta_n^W)|^2}{1-|\tilde\zeta_n^W|^2}$, we use the Cayley transform $C:\{w\in \C : \Re w>0\}\to \D$ defined as $C(w)=\frac{1-w}{1+w}$. Let $w_n:=C^{-1}(\zeta_n^W)$. Then $\{w_n\}$ converges to $0$. Let $\tilde g:=C^{-1}\circ g \circ C$. The map $\tilde g$ is well defined and holomorphic near $0$, $\tilde g(0)=0$,  $\Re \tilde g(iy)=0$ for $y\in \R$ close to $0$ and $\tilde g'(0)=1$. Hence, $\Re \tilde g(w)=(\Re w)a(w)$ for $w$ close to $0$, where $a(w)$ is a real analytic function such that $a(0)=1$.  

Since $1-|C(w)|^2=\frac{4}{|1+w|^2}\Re w$, it follows that
\[
\frac{1-|g(\tilde\zeta_n^W)|^2}{1-|\tilde\zeta_n^W|^2}=\frac{|1+w_n|^2}{|1+\tilde g(w_n)|^2}\frac{\Re \tilde g(w_n)}{\Re w_n}=\frac{|1+w_n|^2}{|1+\tilde g(w_n)|^2}a(w_n),
\]
hence,  $\lim_{n\to \infty}\frac{1-|g(\tilde\zeta_n^W)|^2}{1-|\tilde\zeta_n^W|^2}=1$.

Therefore, $\limsup_{n\to\infty}\frac{P_\D(\zeta_n^W)}{P_\D(\zeta_n^B)}\leq 1$ and, by \eqref{Eq:P-po}, we are done.
\end{proof}

\section{Julia's lemma for strongly pseudoconvex domains in Stein manifolds}

\begin{definition}
Let $M$ be a Stein manifold and let $D\subset\subset M$ be a
strongly pseudoconvex domain with smooth boundary. Let $p\in \de D$ and let $(\alpha, \theta)$ be a defining couple for $T^\C_p\partial D$. The {\sl horoball} of center $p$ and radius  $R>0$ is
\[
H_\alpha^D(p,R):=\{z\in D: \Omega^\alpha_{D,p}(z)<-1/R\}.
\]
\end{definition}

By Proposition~\ref{ovvia}, horoballs are independent of the defining couple $(\alpha, \theta)$, in the sense that, if $(\alpha', \theta')$ is another defining couple, there exists $\lambda>0$ such that $H_\alpha^D(p,R)=H_{\alpha'}^D(p,\lambda R)$ for all $R>0$.

Note that $H^D_\alpha(p,R)$ is an open subset of $D$ and that $\overline{H^D_\alpha(p,R)}\cap \de D=\{p\}$ for all $R>0$.

\begin{definition}
Let $D\subset\subset M$ be a
strongly pseudoconvex domain with smooth boundary.   A sequence $\{z_n\}\subset D$ is an {\sl $E_p$-sequence} if $\{z_n\}$ converges to $p\in \partial D$ and it is eventually contained in any horoball at $p$ for some---and hence any---defining couple for $T_p^\C\partial D$. 
\end{definition}

Note that every sequence $\{z_n\}\subset D$ which converges non-tangentially to $p\in \partial D$ is an $E_p$-sequence. Indeed, choosing local coordinates around $p$, one can find a ball $\B_r\subset D$ which is tangent to $\partial D$ at $p$. If $\{z_n\}$ converges non tangentially to $p$ then $\{z_n\}$ is eventually contained in $\B_r$ and converges to $p$ non-tangentially in $\B_r$ as well. Hence, $\Omega^\alpha_{D,p}(z_n)\leq \Omega^\alpha_{\B_r,p}(z_n)\to -\infty$ as $n\to \infty$.

\begin{theorem}\label{Julia}
Let $M, M'$ be  Stein manifolds. Let $D\subset\subset M$ and  $D'\subset\subset M'$ be a
strongly pseudoconvex domains with smooth boundary.  Let $f: D \to D'$ be holomorphic. For every $p\in \partial D$ fix a defining couple $(\alpha_p, \theta_p)$ of $T_p^\C\partial D$, and similarly for every $q\in \partial D'$ fix a defining couple $(\alpha'_p, \theta'_p)$ of $T_p^\C\partial D'$. For $p\in \partial D$ and $q\in \partial D'$, let \begin{equation*}
\lambda_{p,q}:=\sup_{z\in D}\left\{\frac{\Omega^{\alpha_p}_{D,p}(z)}{\Omega^{\alpha'_q}_{D',q}(f(z))}\right\}.
\end{equation*}
Suppose there exists $p\in\partial D$ such that
\[
\lambda_p:=\inf_{q\in \de D'}\lambda_{p,q}<+\infty.
\]
Then there exists a unique $q\in \de D'$ such that $\lambda_{p,q}<+\infty$ and $\lambda_p=\lambda_{p,q}$. Moreover, for every $E_p$-sequence $\{z_n\}$, the sequence $\{f(z_n)\}$ is an $E_q$-sequence---in particular, $f$ has non-tangential-limit $q$ at $p$---and for all $R>0$
\[
f(H_{\alpha_p}^D(p,R))\subseteq H_{\alpha'_q}^{D'}(q, \lambda_p R).
\]
\end{theorem}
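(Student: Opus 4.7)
The plan is to exploit the definition of $\lambda_{p,q}$ as an algebraic inequality between pluricomplex Poisson kernels. Since $\Omega^{\alpha_p}_{D,p}<0$ in $D$ and $\Omega^{\alpha'_q}_{D',q}<0$ in $D'$, the finiteness of $\lambda_{p,q}$ is equivalent to the pointwise inequality
\[
\Omega^{\alpha_p}_{D,p}(z)\geq \lambda_{p,q}\,\Omega^{\alpha'_q}_{D',q}(f(z))\quad\hbox{for every }z\in D.
\]
Taking this as the starting point, the horoball inclusion is immediate: if $z\in H^D_{\alpha_p}(p,R)$, then $\Omega^{\alpha_p}_{D,p}(z)<-1/R$, and the inequality forces $\Omega^{\alpha'_q}_{D',q}(f(z))<-1/(\lambda_{p,q}R)$, so that $f(z)\in H^{D'}_{\alpha'_q}(q,\lambda_{p,q}R)$. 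Once $q=q^*$ is identified as the unique point realizing $\lambda_p$, this yields the last assertion of the theorem.

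Next, I would use the same inequality to show that whenever $\lambda_{p,q}<+\infty$ and $\{z_n\}\subset D$ is an $E_p$-sequence, the image $\{f(z_n)\}$ is an $E_q$-sequence. Indeed, $\Omega^{\alpha_p}_{D,p}(z_n)\to -\infty$ by definition of $E_p$-sequence, and the inequality forces $\Omega^{\alpha'_q}_{D',q}(f(z_n))\to -\infty$, so $\{f(z_n)\}$ lies eventually in every horoball at $q$. In particular $f(z_n)\to q$: any cluster point of $\{f(z_n)\}$ different from $q$ would lie in $\overline{D'}\setminus\{q\}$, where $\Omega^{\alpha'_q}_{D',q}$ is continuous by Proposition~\ref{continuitygenerale} and hence locally bounded, contradicting $\Omega^{\alpha'_q}_{D',q}(f(z_n))\to -\infty$.

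Uniqueness of $q$ now drops out almost for free: two distinct points $q_1,q_2\in \partial D'$ with $\lambda_{p,q_i}<+\infty$ would force a single $E_p$-sequence to converge both to $q_1$ and to $q_2$, which is absurd. The hypothesis $\lambda_p<+\infty$ provides at least one $q^*\in\partial D'$ with $\lambda_{p,q^*}<+\infty$; hence $q^*$ is the unique such point, and consequently $\lambda_p=\lambda_{p,q^*}$ because $\lambda_{p,q}=+\infty$ for every $q\neq q^*$. The non-tangential limit statement then follows because, as remarked in the excerpt right before the theorem, any sequence converging non-tangentially to $p$ is automatically an $E_p$-sequence via comparison with a tangent ball at $p$.

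The main obstacle I anticipate is the implication ``$\Omega^{\alpha'_q}_{D',q}(f(z_n))\to -\infty$ forces $f(z_n)\to q$'', which rests crucially on the sharp boundary behavior of the pluricomplex Poisson kernel---that it extends continuously with value zero on $\partial D'\setminus\{q\}$---and thus uses Propositions~\ref{prop-Omega} and~\ref{continuitygenerale} in an essential way. Once this structural fact is in hand, the remainder of the argument is a purely formal manipulation of the defining inequality, together with the fact that non-tangential convergence implies the $E_p$-condition.
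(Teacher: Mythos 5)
Your proof is correct and follows essentially the same route as the paper's: both use the observation that $\lambda_{p,q}<+\infty$ is equivalent to the pointwise comparison $\Omega^{\alpha_p}_{D,p}\geq \lambda_{p,q}\,\Omega^{\alpha'_q}_{D',q}\circ f$, deduce from it that $E_p$-sequences map to $E_q$-sequences, and obtain uniqueness from the fact that a single sequence cannot converge to two distinct boundary points. You are slightly more explicit than the paper on the step ``$\Omega^{\alpha'_q}_{D',q}(f(z_n))\to-\infty$ forces $f(z_n)\to q$'', correctly attributing it to the continuity and vanishing of the kernel on $\partial D'\setminus\{q\}$ established in Propositions~\ref{prop-Omega} and~\ref{continuitygenerale}.
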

\begin{proof}
In order to avoid burdening notations, we omit to write $\alpha_p, \alpha_q'$. By hypothesis, there exists $q\in \de D'$ such that 
\[
\lambda_{p,q}:=\sup_{z\in D}\left\{\frac{\Omega_{D,p}(z)}{\Omega_{D',q}(f(z))}\right\}<+\infty.
\]
Let $\{z_n\}\subset D$ be an $E_p$-sequence.  Hence $\lim_{n\to \infty}\Omega_{D,p}(z_n)=-\infty$ and since $\lambda_{p,q}<+\infty$, it follows immediately that $\lim_{n\to \infty}\Omega_{D',q}(f(z_n))=-\infty$. Thus 
 $\lim_{n\to \infty}f(z_n)=q$ and $\{f(z_n)\}$ is eventually contained in any horoball of $D'$ at $q$.

Finally, if it were $\lambda_{p,q'}<+\infty$ for some other $q'\in\partial D'\setminus\{q\}$, arguing as before it would follows that $\{f(z_n)\}$ is both an $E_{q}$-sequence and an $E_{q'}$-sequence for every $E_p$-sequence $\{z_n\}$, a contradiction. 
\end{proof}

It is interesting to note that the condition $\lambda_p<+\infty$ in the previous theorem is, in fact, a local condition at $p$ along non-tangential directions:

\begin{proposition}\label{local-Julia}
Let $M, M'$ be  Stein manifolds. Let $D\subset\subset M$ ({\sl respectively} $D'\subset\subset M'$) be a
strongly pseudoconvex domain with smooth boundary.  Let $f: D \to D'$ be holomorphic. Then $\lambda_p<+\infty$ if and only if there exist $C>0$ and $q\in \partial D'$ such that for all sequences $\{z_n\}$ converging non-tangentially to $p$,
\begin{equation}\label{local-J}
\limsup_{n\to \infty}\frac{\Omega^{\alpha_p}_{D,p}(z_n)}{\Omega^{\alpha'_q}_{D',q}(f(z_n))}\leq C.
\end{equation}
Moreover, $\lambda_p\leq C$.
\end{proposition}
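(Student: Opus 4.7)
\medskip

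\noindent\textbf{Proof plan for Proposition~\ref{local-Julia}.}
The forward implication is immediate: if $\lambda_p<+\infty$, then by definition there exists $q\in\partial D'$ with $\lambda_{p,q}<+\infty$, and the pointwise bound $\Omega^{\alpha_p}_{D,p}(z)/\Omega^{\alpha'_q}_{D',q}(f(z))\leq \lambda_{p,q}$ for every $z\in D$ specializes to \eqref{local-J} along any sequence (non-tangential or not), with $C=\lambda_{p,q}$.

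For the non-trivial direction, I would argue by a Phragm\'en--Lindel\"of-type argument, exhibiting a competitor in the defining family for $\Omega^{\alpha_p}_{D,p}$. Assume \eqref{local-J} holds for some $q\in\partial D'$ and $C>0$, and set
\[
u(z):=C\,\Omega^{\alpha'_q}_{D',q}(f(z)),\qquad z\in D.
\]
Then $u$ is plurisubharmonic in $D$ (it is the pullback of a plurisubharmonic function under the holomorphic map $f:D\to D'$) and $u<0$. The heart of the proof is to check the boundary growth condition of $\mathcal{S}_{\alpha_p}(D)$, namely
\[
\limsup_{t\to 1} u(\gamma(t))(1-t)\leq -2\,\Re\bigl[\theta_{\alpha_p}(\gamma'(1))^{-1}\bigr]=:A
\]
for every $\gamma\in\Gamma_p$. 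For such a curve, the condition $\gamma'(1)\not\in T_p\partial D$ guarantees that $\{\gamma(t_n)\}$ converges non-tangentially to $p$ for any $t_n\to 1$, so hypothesis \eqref{local-J} applies to it.

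The remaining step is a careful sign tracking: both $\Omega^{\alpha_p}_{D,p}$ and $\Omega^{\alpha'_q}_{D',q}\circ f$ are negative, so the bound
\[
\limsup_{t\to 1}\frac{\Omega^{\alpha_p}_{D,p}(\gamma(t))}{\Omega^{\alpha'_q}_{D',q}(f(\gamma(t)))}\leq C
\]
yields, for every $\varepsilon>0$ and $t$ close to $1$, the inequality $\Omega^{\alpha_p}_{D,p}(\gamma(t))\geq(C+\varepsilon)\,\Omega^{\alpha'_q}_{D',q}(f(\gamma(t)))$ (the sign flips because the denominator is negative). Multiplying by $1-t>0$, passing to $\limsup$ as $t\to 1$, and invoking Proposition~\ref{prop-Omega}(3) for the left-hand side gives $\limsup_{t\to 1}\Omega^{\alpha'_q}_{D',q}(f(\gamma(t)))(1-t)\leq A/(C+\varepsilon)$. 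Letting $\varepsilon\to 0$ and multiplying by $C$ produces the required bound $\limsup_{t\to 1}u(\gamma(t))(1-t)\leq A$, so $u\in\mathcal{S}_{\alpha_p}(D)$.

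By maximality of the pluricomplex Poisson kernel, $u\leq\Omega^{\alpha_p}_{D,p}$ on $D$, that is $C\,\Omega^{\alpha'_q}_{D',q}(f(z))\leq \Omega^{\alpha_p}_{D,p}(z)$ for all $z\in D$. Dividing by the negative quantity $\Omega^{\alpha'_q}_{D',q}(f(z))$ reverses the inequality and yields $\Omega^{\alpha_p}_{D,p}(z)/\Omega^{\alpha'_q}_{D',q}(f(z))\leq C$ for every $z\in D$, hence $\lambda_{p,q}\leq C$ and \emph{a fortiori} $\lambda_p\leq C$. The main delicate point is purely bookkeeping: since everything is negative, ratios and inequalities must be manipulated with care, but there is no deeper obstacle because the transversality condition in $\Gamma_p$ produces exactly the non-tangential sequences to which the local hypothesis applies.
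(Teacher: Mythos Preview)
Your proposal is correct and follows essentially the same route as the paper's proof: define $u(z)=C\,\Omega^{\alpha'_q}_{D',q}(f(z))$, verify $u\in\mathcal S_{\alpha_p}(D)$ via the growth condition along curves in $\Gamma_p$, and conclude $u\le\Omega^{\alpha_p}_{D,p}$ by maximality. Your $\varepsilon$-argument for the sign tracking is in fact more careful than the paper's one-line inequality, which silently absorbs exactly the manipulation you spell out.
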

\begin{proof}
Suppose \eqref{local-J} holds. Let $u(z):= C\Omega^{\alpha'_q}_{D',q}(f(z))$. Note that $u$ is plurisubharmonic  and $u<0$ in $D$. Moreover, 
let $\gamma:[0,1]\to D\cup\{p\}$ be a smooth curve such that $\gamma(t)\in D$ for $t\in [0,1)$, $\gamma(1)=p$ and
$\gamma'(1)\not\in T_p\de D$. Then 
\[
\limsup_{t\to 1^-}u(\gamma(t))(1-t)\leq \limsup_{t\to 1^-}\Omega^{\alpha_p}_{D,p}(\gamma(t))(1-t)=-2\Re [\theta_p(\gamma'(1)]^{-1}.
\]
Therefore, $u(z)\in \mathcal S_{\alpha_p}(D)$ and hence, $u(z)\leq  \Omega^{\alpha_p}_{D,p}(z)$ for all $z\in D$.
\end{proof} 

\section{An intrinsic  Julia-Wolff-Carath\'eodory Theorem for strongly pseudoconvex domains in $\C^n$}

Let $D\subset \C^n$ be a bounded, strongly pseudoconvex domain with smooth boundary. 

For each $p\in \partial D$, we can globally change coordinates so that  assumption  (H1) in Subsection~\ref{subset-tecn} holds. So, for each $p\in \partial D$ we can select a compatible complex geodesic $\varphi^D_p:\D \to D$ such that $\varphi^D_p(1)=p$ and $v^D_p:=(\varphi^D_p)'(1)$. We let $\rho_p^D$ be the associated Lempert's projection and $\tilde\rho^D_p$  the left-inverse of $\varphi^D_p$. We have thus the (compatible) defining couple $(\Im (d\tilde\rho^D_p)_p|_{T_p\partial D}, (d\tilde\rho^D_p)_p)$. 

\begin{theorem}\label{Thm:JWC}
Let  $D\subset \C^n, D'\subset \C^m$ be bounded, strongly pseudoconvex domains with smooth boundary. For each $p\in \partial D$ ({\sl respectively}, $q\in \partial D'$) let $(\alpha_p, \theta_p)$ ({\sl respect.}, $(\alpha'_p, \theta'_p)$) be a compatible defining couple of $T_p^\C\partial D$ ({\sl respect.}, $T_q^\C\partial D'$). Let $f:D\to D'$ be holomorphic. Suppose
\begin{equation}\label{Ju-la}
\lambda_p:=\inf_{q\in \partial D'}\sup_{z\in D}\frac{\Omega^{\alpha_p}_{D,p}(z)}{\Omega^{\alpha'_q}_{D',q}(f(z))}<+\infty.
\end{equation}
Then there exists a unique point $q\in \partial D'$ such that $f$ has non-tangential limit $q$ at $p$, the following maps are bounded on every cone in $D$ with vertex at $p$:
\begin{enumerate}
\item $d(\tilde\rho^{D'}_q \circ f)_z(v_p^D)$,
\item $|1-\tilde\rho^D_p(z)|^{1/2}d(f-\rho_q^{D'}\circ f)_z(v_p^D)$,
\item $|1-\tilde\rho^D_p(z)|^{-1/2}d(\tilde\rho_q^{D'}\circ f)_z(\tau_p)$,
\item $d(f-\rho^{D'}_q \circ f)_z(\tau_p)$,
\end{enumerate}
where $\tau_p$ denotes any complex tangent vector to $\partial D$ at $p$. Moreover, the map (1) has non-tangential limit $\lambda_p$ and the maps (2) and (3) have non-tangential limit $0$ at $p$.
\end{theorem}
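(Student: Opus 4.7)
The strategy combines Theorem~\ref{Julia} with the entrapping machinery of Section~\ref{Trap} to reduce the claim to the already known Julia-Wolff-Carath\'eodory theorem for strongly convex domains. I begin by applying Theorem~\ref{Julia}: the hypothesis $\lambda_p<+\infty$ singles out a unique $q\in\partial D'$ realizing the infimum in \eqref{Ju-la}, it shows that $f$ has non-tangential limit $q$ at $p$, and it yields the horoball inclusion $f(H^D_{\alpha_p}(p,R))\subseteq H^{D'}_{\alpha'_q}(q,\lambda_p R)$ for every $R>0$.

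Next, by a holomorphic change of coordinates (innocuous thanks to Proposition~\ref{ovvia}), I place myself under hypothesis (H1) of Section~\ref{subset-tecn}, so that $D$ lies between bounded strongly convex domains $B\subset D\subset W$ agreeing with $D$ on a neighborhood of $p$, and similarly $B'\subset D'\subset W'$ at $q$. Choosing the compatible complex geodesics $\varphi_p^D$ and $\varphi_q^{D'}$ used in the statement simultaneously as complex geodesics of $B,W$ (respectively $B',W'$) via Lemma~\ref{Lemma:geo}, the Lempert projections $\rho_p^D,\tilde\rho_p^D$ and $\rho_q^{D'},\tilde\rho_q^{D'}$ coincide, on suitable neighborhoods of $p$ and $q$, with their counterparts in the entrapping convex domains by Remark~\ref{Rem:Lempert-un-grande}.

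By Proposition~\ref{Prop:M2}, the kernels $\Omega^{\alpha_p}_{B,p},\Omega^{\alpha_p}_{D,p},\Omega^{\alpha_p}_{W,p}$ are asymptotically equivalent at $p$, and similarly at $q$ for the primed domains. Combining this asymptotic equivalence with the horoball inclusion of the first step upgrades, for every $\epsilon>0$ and every sufficiently large $R$, to an analogous inclusion
\[
f\bigl(H^B_{\alpha_p}(p,R)\bigr)\subseteq H^{W'}_{\alpha'_q}\bigl(q,(1+\epsilon)\lambda_p R\bigr)
\]
between the entrapping convex domains. Applied to the restriction $f|_B:B\to W'$, this is precisely the Julia-type hypothesis needed to invoke the Julia-Wolff-Carath\'eodory theorem in the strongly convex setting (see \cite[Section~2]{BCD}) with Julia constant bounded by $(1+\epsilon)\lambda_p$.

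The convex JWC theorem then yields boundedness on non-tangential cones at $p$ of the quantities (1)--(4), expressed via the Lempert objects of $B$ and $W'$; since these coincide with $\rho_p^D,\tilde\rho_p^D$ and $\rho_q^{D'},\tilde\rho_q^{D'}$ on the relevant neighborhoods, and since non-tangential cones at $p$ inside $D$ and inside $B$ coincide close to $p$, the boundedness and the vanishing non-tangential limits for (2) and (3) transfer immediately to the original map $f:D\to D'$. The main obstacle is obtaining the sharp non-tangential limit $\lambda_p$ (not merely $(1+\epsilon)\lambda_p$) for quantity (1); this requires combining the $\epsilon\to 0$ limit with the sharp asymptotic coincidence along non-tangential curves provided by Proposition~\ref{Prop:M2} (equivalently, by formula \eqref{Eq:a-v} transported via Remark~\ref{Rem:Lempert-un-grande}), so that the Julia constants of the approximating convex problems converge exactly to $\lambda_p$.
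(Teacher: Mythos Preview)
Your overall strategy—apply Theorem~\ref{Julia}, entrap both domains via (H1), restrict to $g:=f|_B:B\to W'$, and invoke the strongly convex JWC theorem—is exactly the paper's approach. Two points deserve comment.

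First, your $(1+\epsilon)$-horoball detour is unnecessary. The monotonicity \eqref{Eq:dineq} gives directly, for all $z\in B$,
\[
\Omega^{\alpha'_q}_{W',q}(g(z))\leq \Omega^{\alpha'_q}_{D',q}(f(z))\leq \tfrac{1}{\lambda_p}\Omega^{\alpha_p}_{D,p}(z)\leq \tfrac{1}{\lambda_p}\Omega^{\alpha_p}_{B,p}(z),
\]
so the Julia constant $\tilde\lambda_p:=\sup_{z\in B}\Omega^{\alpha_p}_{B,p}(z)/\Omega^{\alpha'_q}_{W',q}(g(z))$ of the single convex problem $g:B\to W'$ satisfies $\tilde\lambda_p\leq\lambda_p$ with no $\epsilon$. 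There is no ``family of approximating convex problems''; the convex JWC yields that (1) has non-tangential limit exactly $\tilde\lambda_p$, and the remaining task is to prove $\tilde\lambda_p\geq\lambda_p$.

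Second—and this is where your sketch has a genuine gap—your argument for $\tilde\lambda_p\geq\lambda_p$ is too vague and misses the main difficulty. One takes (via Proposition~\ref{local-Julia}) a non-tangential curve $\gamma$ in $D$ along which $\Omega_D/\Omega_{D'}\!\circ\! f$ nearly realizes $\lambda_p$, and one wants $\Omega_B/\Omega_D\to 1$ along $\gamma$ (fine, by Proposition~\ref{Prop:M2}) \emph{and} $\Omega_{D'}/\Omega_{W'}\to 1$ along $f\circ\gamma$. The problem is that $f(\gamma(t))$ need not approach $q$ non-tangentially in $D'$, so Proposition~\ref{Prop:M2} does not apply directly on the target side. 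The paper resolves this using \cite[Corollary~1.8]{A0}: since $\tilde\rho^{D'}_q(f(\gamma(t)))\to 1$ non-tangentially in $\D$ and $\varphi^{D'}_q(\overline\D)$ is transverse to $\partial D'$, any limiting direction of $f(\gamma(t))$ at $q$ is either transverse to $T_q\partial D'$ or lies in $T_q^\C\partial D'$; in both cases Proposition~\ref{Prop:M2} (or Lemma~\ref{Lemma:M1}) yields the required ratio $\to 1$. Your phrase ``sharp asymptotic coincidence along non-tangential curves'' glosses over precisely this point.
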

\begin{remark}
Using Abate's ``projection devices'' \cite{A0, A1}, one can replace the ``non-tangential approach'' in the statement with the bigger class of  {\sl restricted $K$-limits}. However, we are not going to develop this argument in this paper. 
\end{remark}
\begin{proof}
By Theorem~\ref{Julia}, there exists a (unique) point $q$ such that 
\[
\lambda_p=\sup_{z\in D}\frac{\Omega^{\alpha_p}_{D,p}(z)}{\Omega^{\alpha'_q}_{D',q}(f(z))},
\]
and  $f$ has non-tangential limit $q$ at $p$.

As in Subsection~\ref{subset-tecn} we can and we will assume that (H1) holds for $D$ at $p$ and $D'$ at $q$ (and we denote by $B'\subset D'\subset W'$ the two strongly convex domains entrapping $D'$). It is easy to see that this assumption does not change the statements.

In order to avoid burdening notations, we write $\Omega_{X}$ instead of $\Omega_{X,p}^{\alpha_p}$, for $X=B, D$ and $\Omega_{X}$ instead of $\Omega_{X,q}^{\alpha'_q}$ for $X=D', W'$.

Then we define $g:=f|_B:B\to W'$. By \eqref{Eq:dineq} and \eqref{Ju-la}, we have
\[
\Omega_{W'}(g(z))\leq \Omega_{D'}(f|_{B}(z))\leq\frac{1}{\lambda_p}\Omega_{D}(z)\leq \frac{1}{\lambda_p}\Omega_{B}(z).
\]
Let
\[
\tilde\lambda_p:=\sup_{z\in B}\frac{\Omega_{B}(z)}{\Omega_{W'}(g(z))}.
\]
Hence, $\tilde\lambda_p\leq \lambda_p$. Choose now $z_0=\varphi^D_p(0)$ and $z_0'=\varphi^{D'}_q(0)$. By \cite[Proposition 2.3]{BCD} (actually, that proposition is proved for self-maps of the same domain, but it is easy to see that the proof adapts to holomorphic maps between different bounded, strongly convex domains with smooth boundary)
\[
\frac{1}{2}\log \tilde\lambda_p=\liminf_{z\to p}[k_B(z,z_0)-k_{W'}(f(z),z_0')],
\]
where $k_M$ denotes the Kobayashi distance of a domain $M$. Hence, we can apply Abate's Julia-Wolff-Carath\'eodory's theorem for strongly convex domains (see \cite[Thm. 0.8]{A0} and \cite[Thm. 2.1]{A1}), and we have that the maps (1)---(4) are bounded in any cone in $B$ with vertex at $p$. Since $B$ and $D$ are tangent at $p$, the same statement follows in $D$. Moreover, (2) and (3) have non-tangential limit $0$ at $p$. And (1) has non-tangential limit $\tilde\lambda_p$.  

We are left to show that $\tilde\lambda_p=\lambda_p$. 

By Proposition~\ref{local-Julia}, for every $0<a<1$ there exists a smooth curve $\gamma:[0,1]\to D\cup\{p\}$ converging to $p$ such that $\gamma'(1)\not\in T_p\partial D$ and 
$\liminf_{t\to 1}\frac{\Omega_D(\gamma(t))}{\Omega_{D'}(f(\gamma(t)))}\geq a\lambda_p$. 

By Proposition~\ref{Prop:M2}, we have
\begin{equation}\label{Eq:great1}
\lim_{t\to 1}\frac{\Omega_{B}(\gamma(t))}{\Omega_{D}(\gamma(t))}=1.
\end{equation}

By \cite[Corollary 1.8]{A0}, (recalling that $\tilde\rho_q^{D'}$ is also the Lempert projection of $W'$ onto $\varphi_q^{D'}$), we know that $\tilde\rho_q^{D'}(f(\gamma(t)))$ converges to $1$ non-tangentially. Since $\varphi_q^{D'}(\oD)$ is transverse to $\partial D'$ at $q$, this implies that if $v$ is the limit as $n\to \infty$ of  $\frac{(f(\gamma(t_n))'}{|(f(\gamma(t_n))'|}$ for some sequence $t_n\to 1$, then either $v\not\in T_q\partial D'$ or $v\in T_q^\C\partial D'$. Hence, by Proposition~\ref{Prop:M2}, we have
\[
\lim_{t\to 1}\frac{\Omega_{D'}(f(\gamma(t)))}{\Omega_{W'}(f(\gamma(t)))}=1.
\]
By the previous equation and \eqref{Eq:great1} we have
\[
\tilde\lambda_p\geq \liminf_{t\to 1}\frac{\Omega_B(\gamma(t))}{\Omega_{W'}(g(\gamma(t)))}=\liminf_{t\to 1}\frac{\Omega_D(\gamma(t))}{\Omega_{D'}(f(\gamma(t)))}\frac{\Omega_{D'}(f(\gamma(t)))}{\Omega_{W'}(f(\gamma(t)))}\frac{\Omega_{B}(\gamma(t))}{\Omega_{D}(\gamma(t))}\geq a\lambda_p.
\]
By the arbitrariness of $a$, we have $\tilde\lambda_p\geq \lambda_p$, and we are done.
\end{proof}

It is interesting to compare Theorem~\ref{Thm:JWC} with Abate's Julia-Wolff-Carath\'eodory theorem for strongly pseudoconvex domains~\cite[Theorem 0.2]{A1}: 

\begin{theorem}[Abate]\label{AbateJWC}
Let  $D\subset \C^n, D'\subset \C^m$ be bounded, strongly pseudoconvex domain with smooth boundary. Let $p\in \partial D$. Let $f:D\to D'$ be holomorphic. Suppose
\begin{equation}\label{dist-Abate}
\liminf_{z\to p} \frac{\hbox{dist}(f(z), \de D')}{\hbox{dist}(z, \de D)}<+\infty.
\end{equation}
Then there exists a unique point $q\in \partial D'$ such that $f$ has non-tangential limit $q$ at $p$, the following maps are bounded on every cone in $D$ with vertex at $p$:
\begin{enumerate}
\item $\pi_q(df_z(\nu_p))$,
\item $\hbox{dist}(z, \de D)^{1/2}d(f-\overline{\pi_q}\circ f)_z(\nu_p)$,
\item $\hbox{dist}(z, \de D)^{-1/2}\pi_q(df_z(\tau_p))$,
\item $d(f-\overline{\pi_q}\circ f)_z(\tau_p)$,
\end{enumerate}
where $\nu_p$ is the outer unit normal vector of $\partial D$ at $p$ (with respect to the Hermitian product in $\C^n$), $\nu_q$ is the unit outward normal of $\partial D$ at $q$ and $\pi_q(v):=\langle v, \nu_q\rangle\nu_q$ and $\tau_p$ denotes any complex tangent vector to $\partial D$ at $p$. Moreover, the map (1) has finite nonzero non-tangential limit  and the maps (2) and (3) have non-tangential limit $0$ at $p$.
\end{theorem}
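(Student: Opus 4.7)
The plan is to derive Theorem~\ref{AbateJWC} from Theorem~\ref{Thm:JWC} by showing that Abate's distance hypothesis~\eqref{dist-Abate} implies the Poisson-kernel hypothesis $\lambda_p<+\infty$, and then by translating the four bounded quantities in Theorem~\ref{Thm:JWC} into their Euclidean counterparts in Theorem~\ref{AbateJWC}. After a global change of coordinates we may assume hypothesis (H1) of Subsection~\ref{subset-tecn} holds both for $D$ at $p$ and for $D'$ at the candidate limit point $q$; this gives us compatible defining couples $(\alpha_p,\theta_p)$, $(\alpha'_q,\theta'_q)$ and compatible complex geodesics $\varphi_p^D$, $\varphi_q^{D'}$ whose left-inverses $\tilde\rho_p^D$, $\tilde\rho_q^{D'}$ are smooth up to the boundary near $p$ and $q$ respectively.

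To pass from \eqref{dist-Abate} to $\lambda_p<+\infty$, I would use the non-tangential asymptotic
\[
\Omega_{D,p}^{\alpha_p}(z) \;\asymp\; -\frac{1}{\dist(z,\partial D)},
\]
valid on any non-tangential cone at $p$. This follows from Proposition~\ref{prop-Omega}(3) applied to smooth transverse curves, together with Proposition~\ref{tg-lim} to handle sequences not confined to a single curve (using $|\langle p-z_n,\nu_p\rangle|\asymp\dist(z_n,\partial D)$ on such cones). Given a sequence $\{z_n\}$ realizing \eqref{dist-Abate}, both $\dist(z_n,\partial D)$ and $\dist(f(z_n),\partial D')$ tend to zero, so $f(z_n)$ accumulates on $\partial D'$; let $q$ be an accumulation point. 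The distance comparison then yields a uniform bound on $\Omega_{D,p}^{\alpha_p}(z_n)/\Omega_{D',q}^{\alpha'_q}(f(z_n))$ along a non-tangentially-converging subsequence, and Proposition~\ref{local-Julia} delivers $\lambda_p\leq\lambda_{p,q}<+\infty$.

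The conversion of the conclusions rests on two comparisons. Since $\tilde\rho_q^{D'}$ is smooth up to $\partial D'$ near $q$ with $\tilde\rho_q^{D'}(q)=1$, $\ker d\tilde\rho_q^{D'}|_q=T_q^\C\partial D'$, and $d\tilde\rho_q^{D'}(\nu_q)>0$, its differential at $q$ is a positive multiple of $\langle\cdot,\nu_q\rangle$. Hence $d(\tilde\rho_q^{D'}\circ f)_z(v)$ is comparable to $\pi_q(df_z(v))$ up to remainders vanishing at $q$, and the tangential piece $f-\rho_q^{D'}\circ f$ is comparable to $f-\overline{\pi_q}\circ f$. Second, on non-tangential cones at $p$,
\[
|1-\tilde\rho_p^D(z)| \;\asymp\; \dist(z,\partial D),
\]
which follows by combining the fact that $\tilde\rho_p^D$ sends non-tangential cones into non-tangential cones of $\D$ with the Hopf-type estimate $\dist(\varphi_p^D(\zeta),\partial D)\asymp 1-|\zeta|$ along the distinguished geodesic. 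The main obstacle is precisely this last comparison in full generality (i.e.\ for points off the distinguished geodesic), which requires patching together the smoothness of $\tilde\rho_p^D$ up to $\partial D$ near $p$ (inherited from the entrapping convex domain $B$ via Huang's preservation principle) with Hopf's lemma for $\tilde\rho_p^D$. Once this is in place, the four bounded quantities, including the identification of $\lambda_p$ with the non-tangential limit of the normal component of $df_z(\nu_p)$, transfer from Theorem~\ref{Thm:JWC} to Abate's formulation by routine bookkeeping.
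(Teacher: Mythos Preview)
Theorem~\ref{AbateJWC} is not proved in this paper at all: it is quoted from Abate~\cite{A1} as a known result, for the purpose of comparing its statement with Theorem~\ref{Thm:JWC}. So there is no ``paper's own proof'' of this statement to compare your proposal against. Moreover, your strategy of deriving Abate's theorem from Theorem~\ref{Thm:JWC} is at least partially circular: the proof of Theorem~\ref{Thm:JWC} explicitly invokes Abate's Julia--Wolff--Carath\'eodory theorem (in the strongly convex setting, \cite[Thm.~0.8]{A0} and \cite[Thm.~2.1]{A1}) to obtain the boundedness of the maps (1)--(4) and the vanishing of (2) and (3).

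Even setting circularity aside, your reduction of~\eqref{dist-Abate} to $\lambda_p<+\infty$ has a genuine gap. You want to use the non-tangential asymptotic $\Omega^{\alpha_p}_{D,p}(z)\asymp -1/\dist(z,\partial D)$ and then invoke Proposition~\ref{local-Julia}. But a sequence $\{z_n\}$ realizing the $\liminf$ in~\eqref{dist-Abate} need not approach $p$ non-tangentially, and you have no control over how $f(z_n)$ approaches $q$ either; Proposition~\ref{tg-lim} does not cover tangential approach. The paper handles precisely this difficulty in the (unnumbered) Proposition following Theorem~\ref{AbateJWC}, but by a different route: it passes through the Kobayashi-distance formulation~\eqref{Kob-Abate}, uses the entrapping domains $B\subset D\subset W$, and applies the localization estimate \cite[Lemma~5.4]{BG} to compare $k_D$ with $k_B$ and $k_{D'}$ with $k_{W'}$ near the boundary. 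That argument makes no non-tangentiality assumption on the minimizing sequence, which is why it succeeds where your asymptotic comparison would not.
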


Again, in the statement, non-tangential approach can be replaced with $K$-regions approach. 

Using Lempert's special  coordinates, one can take $\nu_p=v_p^D$ and $\pi_q=\rho_q^{D'}$, so that, the conclusions of Theorem~\ref{Thm:JWC} and Theorem~\ref{AbateJWC} are almost the same. The main difference between  Theorem~\ref{Thm:JWC} and Theorem~\ref{AbateJWC} is that the value of the normal projection of the derivative along the normal direction (when such directions are chosen to be ``compatible'') can be computed in terms of an intrinsic data, the number $\lambda_p$, as in the case of strongly convex domains.

Using estimates of the Kobayashi distance,  it is easy to see that \eqref{dist-Abate} is equivalent to 
\begin{equation}\label{Kob-Abate}
\frac{1}{2}\log \beta_f:=\liminf_{z\to p}[k_D(z_0, z)-k_{D'}(f(z), z_0')]<+\infty
\end{equation}
where $z_0\in D$ and $z_0'\in D'$ are two fixed points. 

As we remarked in the proof of Theorem~\ref{Thm:JWC}, in the strongly convex case (when suitably choosing $z_0, z_0'$), $\beta_f=\lambda_p$ (defined as in \eqref{Ju-la}), and this is also the value of the projection of the derivative of $f$ along the chosen complex geodesic at $p$. In strongly pseudoconvex domains however, there is no known relation between $\beta_f$ and the value of the normal projection of the differential of $f$ along the normal direction. Here we prove that, however, Abate's hypothesis \eqref{dist-Abate} is equivalent to~\eqref{Ju-la}:

\begin{proposition}
Let  $D\subset \C^n, D'\subset \C^m$ be bounded, strongly pseudoconvex domain with smooth boundary. For each $p\in \partial D$ ({\sl respectively}, $q\in \partial D'$) let $(\alpha_p, \theta_p)$ ({\sl respect.}, $(\alpha'_p, \theta'_p)$) be a compatible defining couple of $T_p^\C\partial D$ ({\sl respect.}, $T_q^\C\partial D'$). Let $z_0\in D$ and $z_0'\in D'$. Let $f:D\to D'$ be holomorphic. Then the following are equivalent:
\begin{enumerate}
\item $\lambda_p<+\infty$,
\item $\liminf_{z\to p}[k_D(z_0, z)-k_{D'}(f(z), z_0')]<+\infty$,
\item $\liminf_{z\to p} \frac{\hbox{dist}(f(z), \de D')}{\hbox{dist}(z, \de D)}<+\infty$.
\end{enumerate}
\end{proposition}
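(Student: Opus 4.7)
The strategy is to establish $(2)\Leftrightarrow(3)$ via standard Kobayashi-distance boundary estimates and $(1)\Leftrightarrow(3)$ via the entrapping machinery of Subsection~\ref{subset-tecn}.

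For $(2)\Leftrightarrow(3)$: in a bounded $C^2$ strongly pseudoconvex domain $D$ the classical boundary estimate $k_D(z_0,z)=-\frac{1}{2}\log\dist(z,\partial D)+O(1)$ holds uniformly as $z\to\partial D$, and analogously for $D'$. Subtracting,
\[
k_D(z_0,z)-k_{D'}(f(z),z_0')=\tfrac{1}{2}\log\frac{\dist(f(z),\partial D')}{\dist(z,\partial D)}+O(1),
\]
so the two $\liminf$s are simultaneously finite.

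For $(1)\Rightarrow(3)$, let $q\in\partial D'$ be the point provided by Theorem~\ref{Julia} and set up the entrapping data (H1) at both $p$ and $q$, giving bounded strongly convex domains $B\subset D\subset W$ near $p$ and $B'\subset D'\subset W'$ near $q$. Because $W'$ is strongly convex, squeezing it between two balls tangent at $q$ and computing the Poisson kernel of $\B^n$ directly yields $|\Omega^{\alpha'_q}_{W',q}(w)|\leq C/\dist(w,\partial D')$ for $w$ near $q$; combined with $|\Omega^{\alpha'_q}_{D',q}(w)|\leq|\Omega^{\alpha'_q}_{W',q}(w)|$ from \eqref{Eq:dineq} this forces $|\Omega^{\alpha'_q}_{D',q}(w)|\leq C/\dist(w,\partial D')$. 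On the other hand, for any non-tangential sequence $z_n\to p$, Proposition~\ref{prop-Omega}(3) produces $c>0$ with $|\Omega^{\alpha_p}_{D,p}(z_n)|\geq c/\dist(z_n,\partial D)$ eventually, and Theorem~\ref{Julia} forces $f(z_n)\to q$. The defining inequality $|\Omega^{\alpha'_q}_{D',q}(f(z_n))|\geq|\Omega^{\alpha_p}_{D,p}(z_n)|/\lambda_p$ then produces $\dist(f(z_n),\partial D')\leq(C\lambda_p/c)\dist(z_n,\partial D)$, which is (3).

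For $(3)\Rightarrow(1)$: using the local identifications $\partial B=\partial D$ near $p$ and $\partial W'=\partial D'$ near $q$, the differences $k_B-k_D$ and $k_{D'}-k_{W'}$ are bounded on neighborhoods of $p$ and $q$ respectively (both expressions being $-\frac{1}{2}\log\dist+O(1)$), so condition (2) transfers to $\liminf_{z\to p}[k_B(z_0,z)-k_{W'}(g(z),z_0')]<+\infty$ for $g:=f|_B:B\to W'$. The strongly-convex analog \cite[Proposition~2.3]{BCD} then gives $\tilde\lambda_p:=\sup_{z\in B}\Omega^{\alpha_p}_{B,p}(z)/\Omega^{\alpha'_q}_{W',q}(g(z))<+\infty$. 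To invoke Proposition~\ref{local-Julia} it suffices to bound $\limsup\Omega^{\alpha_p}_{D,p}(z_n)/\Omega^{\alpha'_q}_{D',q}(f(z_n))$ along non-tangential $z_n\to p$; Theorem~\ref{AbateJWC} guarantees $f(z_n)\to q$, and two applications of Proposition~\ref{Prop:M2}---at $p$ for $z_n$ and at $q$ for $f(z_n)$---give $\Omega^{\alpha_p}_{D,p}(z_n)/\Omega^{\alpha_p}_{B,p}(z_n)\to 1$ and $\Omega^{\alpha'_q}_{D',q}(f(z_n))/\Omega^{\alpha'_q}_{W',q}(f(z_n))\to 1$. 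Multiplying with the bounded middle factor $\Omega^{\alpha_p}_{B,p}(z_n)/\Omega^{\alpha'_q}_{W',q}(g(z_n))\leq\tilde\lambda_p$ yields $\lambda_p\leq\tilde\lambda_p<+\infty$. The main hurdle I anticipate is the bookkeeping for the uniform comparabilities $k_B-k_D=O(1)$ and $k_{D'}-k_{W'}=O(1)$ required for the transfer of (2), together with the uniform bound $|\Omega^{\alpha'_q}_{W',q}|\leq C/\dist(\cdot,\partial W')$ near $q$; both ultimately rest on strong convexity of $W'$ and the common-boundary hypothesis of (H1), but must be argued carefully to preserve the $\liminf$ condition.
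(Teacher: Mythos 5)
Your proposal is correct and relies on the same core toolbox as the paper: the Kobayashi-distance boundary estimate for $(2)\Leftrightarrow(3)$, the localization lemma \cite[Lemma 5.4]{BG}, the strongly convex transfer via \cite[Proposition~2.3]{BCD}, the entrapping inequalities \eqref{Eq:dineq}, Proposition~\ref{Prop:M2}, and Proposition~\ref{local-Julia}. The one genuine difference is in the direction from $(1)$: the paper proves $(1)\Rightarrow(2)$ in one line from the monotonicity of the Kobayashi distance ($k_D\leq k_B$ on $B$ and $k_{W'}\leq k_{D'}$ on $D'$) combined with the already-established $\tilde\lambda_p\leq\lambda_p$ and \cite[Proposition~2.3]{BCD}, whereas you prove $(1)\Rightarrow(3)$ directly by squeezing $\Omega^{\alpha'_q}_{W',q}$ between ball Poisson kernels to get $|\Omega^{\alpha'_q}_{D',q}(w)|\leq C/\dist(w,\partial D')$ near $q$ and combining with the lower bound $|\Omega^{\alpha_p}_{D,p}(z_n)|\geq c/\dist(z_n,\partial D)$ along a non-tangential sequence. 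Both routes are valid; the paper's is shorter because it recycles the Kobayashi formulation, while yours is a bit more self-contained and makes the distance-comparison content of $\lambda_p$ explicit. In the direction $(3)\Rightarrow(1)$, you essentially re-derive the inequality $\lambda_p\leq\tilde\lambda_p$ via Proposition~\ref{Prop:M2} and Proposition~\ref{local-Julia}, which the paper delegates to (a pointer back to) the proof of Theorem~\ref{Thm:JWC}; this is the same argument made explicit. One point worth tightening in your write-up: when you ``transfer condition (2),'' you need the liminf-realizing sequence to have $f(z_n)$ near $q$ so that the localization of $k_{D'}$ versus $k_{W'}$ applies — as the paper notes, this is supplied by the proof of Theorem~\ref{AbateJWC}, which produces a non-tangential sequence carrying both the bounded Kobayashi difference and $f(z_n)\to q$.
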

\begin{proof}
We already saw that (2) and (3) are equivalent. In particular, the choice of $z_0, z_0'$ is irrelevant for (2).  Now we use the same notations as in the proof of Theorem~\ref{Thm:JWC}, setting $z_0=\varphi_p^D(0)$ and $z_0'=\varphi_q^{D'}(0)$. We saw that $\lambda_p=\tilde\lambda_p$. Hence, it is enough to show that (2) is equivalent to $\tilde\lambda_p<+\infty$. 

On the one side, for all $z\in B$, since $B\subset D$ and $D'\subset W'$, 
\[
k_D(z_0, z)-k_{D'}(f(z), z_0')\leq k_B(z_0, z)-k_{W'}(f(z), z_0'),
\]
and hence, (1) implies (2). 

On the other side, if (2) holds, it follows from the proof of Theorem~\ref{AbateJWC} that there exists a sequence $\{z_n\}\subset D$ converging to $p$ such that $\{f(z_n)\}$ converges to $q$ (in fact, {\sl a posteriori}, any sequence converging non-tangentially to $p$ does) and 
\[
\limsup_{n\to\infty}[k_D(z_0, z_n)-k_{D'}(f(z_n), z_0')]<+\infty.
\]
By \cite[Lemma 5.4]{BG}, we can find $T\geq 1$ such that for $n$ sufficiently large so that $z_n$ stays sufficiently close to $p$ and $f(z_n)$ stays sufficiently close to $q$, we have
\[
|k_B(z_n,z_0)-k_D(z,z_0)|+|k_{W'}(f(z_n), z_0')-k_{D'}(f(z_n), z_0')|\leq \log T.
\]
Hence (2) implies $\tilde\lambda_p<+\infty$---and hence (1) holds.
\end{proof}

\section{Further properties of the pluricomplex Poisson kernel}

In this section we are going to prove some further property of the pluricomplex Poisson kernel in $\C^n$, such as uniqueness and (semi)continuity properties with respect to the change of pole, that will be useful later on.

We start by a uniqueness result, whose proof is exactly the same as that of \cite[Thm. 7.1]{BPT}:

\begin{proposition}\label{Prop:unique}
Let $D\subset \C^n$ be a bounded strongly pseudoconvex domain with smooth boundary and let $p\in \partial D$. Fix a defining couple  $(\alpha_p, \theta_p)$ of $T_p^\C\partial D$. Let $u$ be a maximal plurisubharmonic function on $D$ such that $\lim_{x\to q}u(x)=0$ for all $q\in \partial D\setminus\{p\}$ and
\[
\lim_{z\to p}\frac{\Omega^{\alpha_p}_{D,p}(z)}{u(z)}=1.
\]
Then $u=\Omega^{\alpha_p}_{D,p}$.
\end{proposition}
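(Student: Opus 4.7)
The plan is a two-step comparison argument, paralleling \cite[Thm.~7.1]{BPT}. Step~1 is to show $u\leq \Omega^{\alpha_p}_{D,p}$ by verifying that $u$ belongs to the family $\mathcal{S}_{\alpha_p}(D)$, after which maximality of $\Omega^{\alpha_p}_{D,p}$ closes the inequality. The hypothesis $\lim_{z\to p}\Omega^{\alpha_p}_{D,p}(z)/u(z)=1$ forces $u(z)\to-\infty$ as $z\to p$, and since $u$ extends continuously by zero on $\partial D\setminus\{p\}$, the maximum principle gives $u<0$ on $D$. For any $\gamma\in\Gamma_p$, combining Proposition~\ref{prop-Omega}(3) with the ratio hypothesis yields
\[
\lim_{t\to 1}u(\gamma(t))(1-t)=\lim_{t\to 1}\Omega^{\alpha_p}_{D,p}(\gamma(t))(1-t)=-2\Re[\theta_{\alpha_p}(\gamma'(1))^{-1}],
\]
which is the defining condition of $\mathcal{S}_{\alpha_p}(D)$ in \eqref{family1}. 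Thus $u\in\mathcal{S}_{\alpha_p}(D)$ and $u\leq \Omega^{\alpha_p}_{D,p}$ follows from the very definition of the pluricomplex Poisson kernel.

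For the reverse inequality, I would invoke the Bedford--Taylor comparison principle applied to the pair of locally bounded plurisubharmonic functions $u$ and $(1+\varepsilon)\Omega^{\alpha_p}_{D,p}$, for each fixed $\varepsilon>0$. Both are maximal, so $(dd^c u)^n=0=(dd^c[(1+\varepsilon)\Omega^{\alpha_p}_{D,p}])^n$, and local boundedness of $\Omega^{\alpha_p}_{D,p}$ is Proposition~\ref{prop-Omega}(4). The boundary hypothesis to verify is
\[
\liminf_{z\to\partial D}\bigl[u(z)-(1+\varepsilon)\Omega^{\alpha_p}_{D,p}(z)\bigr]\geq 0.
\]
For $q\in\partial D\setminus\{p\}$ both terms tend to $0$. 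Near $p$, the relations $\Omega^{\alpha_p}_{D,p}/u\to 1$ and $u<0$ give $(1+\varepsilon)\Omega^{\alpha_p}_{D,p}/u\to 1+\varepsilon>1$, so $(1+\varepsilon)\Omega^{\alpha_p}_{D,p}<u$ in a punctured neighborhood of $p$; in fact $u-(1+\varepsilon)\Omega^{\alpha_p}_{D,p}\sim\varepsilon|u|\to+\infty$. The comparison principle then yields $u\geq(1+\varepsilon)\Omega^{\alpha_p}_{D,p}$ on $D$, and letting $\varepsilon\to 0^+$ gives $u\geq\Omega^{\alpha_p}_{D,p}$. Combined with Step~1 this concludes $u=\Omega^{\alpha_p}_{D,p}$.

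The main technical point, and the place where the normalization hypothesis at $p$ is used essentially, is this boundary check: both functions blow up at $p$, but the factor $(1+\varepsilon)>1$ multiplied against the common asymptotic rate produces a strict separation that dominates the simple pole. That is exactly what enables the Bedford--Taylor comparison principle to extend across the singular boundary point, overcoming the fact that the naive difference $u-\Omega^{\alpha_p}_{D,p}$ carries no a~priori sign information near $p$.
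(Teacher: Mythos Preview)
Your argument is correct and follows exactly the two-step comparison scheme of \cite[Thm.~7.1]{BPT}, which is precisely what the paper invokes here without further details. One small caution: the assertions that $u(z)\to-\infty$ and $u-(1+\varepsilon)\Omega^{\alpha_p}_{D,p}\to+\infty$ as $z\to p$ are slight overstatements (they can fail along sufficiently tangential approaches, since $\Omega^{\alpha_p}_{D,p}$ itself need not tend to $-\infty$ there), but only the weaker facts $u<0$ on $D$ and $\liminf_{z\to p}\bigl[u-(1+\varepsilon)\Omega^{\alpha_p}_{D,p}\bigr]\geq 0$ are actually used, and those do follow from the ratio hypothesis.
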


\begin{remark}
As observed in \cite[Prop. 4.3]{HW}, if $D$ is strongly convex, the previous uniqueness result holds if one replaces unrestricted limits with non-tangential limits. The proof of this result relies on complex geodesics and it is not clear how to extend to strongly pseudoconvex domains.
\end{remark}

Next, we prove that the pluricomplex Poisson kernel can be used to define a measure on the boundary:

\begin{proposition}\label{semicontinuityinp}
Let $D\subset \C^n$ be a bounded strongly pseudoconvex domain with smooth boundary. Choose a defining couple  $(\alpha_p, \theta_p)$ of $T_p^\C\partial D$ which varies  continuously with respect to $p$. Then, the function $\Omega^{\alpha_p}_{D,p}(z)$ is
upper semicontinuous with respect to the variable $p\in
\partial D$.
\end{proposition}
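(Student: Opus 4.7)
The plan is to fix $z_0 \in D$ and an arbitrary sequence $\{p_n\} \subset \partial D$ with $p_n \to p_0$, and to show
\[
\limsup_{n\to\infty} \Omega^{\alpha_{p_n}}_{D,p_n}(z_0) \leq \Omega^{\alpha_{p_0}}_{D,p_0}(z_0).
\]
The strategy is to extract an $\loc$ limit $v$ of the kernels $v_n := \Omega^{\alpha_{p_n}}_{D,p_n}$ and recognize it as a competitor in $\mathcal{S}_{\alpha_{p_0}}(D)$; maximality then yields $v \leq \Omega^{\alpha_{p_0}}_{D,p_0}$ and the claim.

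First I would establish local uniform bounds on $\{v_n\}$. Picking a continuous family of holomorphic peak functions $\{f_p\}_{p\in\partial D}$ for $D$ (for instance via the Henkin--Ram\'irez construction), the argument of Lemma~\ref{esiste limitato} gives $\lambda_p P_\D \circ f_p \in \mathcal{S}_{\alpha_p}(D)$, where $\lambda_p > 0$ is the unique constant satisfying $df_p = \lambda_p \theta_{\alpha_p}$. By the continuous dependence of $(\alpha_p, \theta_{\alpha_p})$ and of $f_p$ on $p$, $\lambda_p$ is continuous in $p$, so $v_n \geq \lambda_{p_n} P_\D \circ f_{p_n}$ is uniformly bounded below on each compact $K \subset D$ for $n$ large (since $f_{p_n} \to f_{p_0}$ uniformly on $\overline{D}$ and $|f_{p_0}| < 1$ on $K$). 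Standard compactness for plurisubharmonic functions then produces a subsequence (not relabeled) with $v_n \to v$ in $\loc(D)$ for some $v \in \ps(D)$ with $v \leq 0$. By the sub-mean-value inequality combined with $\loc$ convergence, $\limsup_n v_n(z) \leq v(z)$ for every $z \in D$.

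Next I would compare $v$ with the Poisson kernel of a ball tangent to $\partial D$ at $p_0$. Fix a small closed ball $\overline{B_0} \subset \overline{D}$ tangent to $\partial D$ only at $p_0$, and, for $n$ large, a ball $B_n \subset D$ of the same radius tangent to $\partial D$ only at $p_n$, with $B_n \to B_0$ in the Hausdorff sense. Lemma~\ref{palla-equiv} yields the pointwise inequality $v_n|_{B_n} \leq \Omega^{\alpha_{p_n}}_{B_n, p_n}$. The explicit form of the ball Poisson kernel, together with continuity of $\alpha_p$ in $p$, gives $\Omega^{\alpha_{p_n}}_{B_n, p_n} \to \Omega^{\alpha_{p_0}}_{B_0, p_0}$ uniformly on compact subsets of $\overline{B_0} \setminus \{p_0\}$. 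Integrating the pointwise inequality on a compact $K \subset B_0$ and letting $n \to \infty$ yields $\int_K v\, dV \leq \int_K \Omega^{\alpha_{p_0}}_{B_0, p_0}\, dV$; since the right-hand side is smooth in the interior of $B_0$, the sub-mean-value property for the plurisubharmonic function $v$ upgrades this to the pointwise inequality $v \leq \Omega^{\alpha_{p_0}}_{B_0, p_0}$ on all of $B_0$.

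Since $\Omega^{\alpha_{p_0}}_{B_0, p_0} \not\equiv 0$, the above forces $v \not\equiv 0$, and the maximum principle on the connected domain $D$ gives $v < 0$ everywhere. Lemma~\ref{palla-equiv} then identifies $v$ as an element of $\mathcal{S}_{\alpha_{p_0}}(D)$, and maximality of the pluricomplex Poisson kernel yields $v \leq \Omega^{\alpha_{p_0}}_{D, p_0}$. Combining with $\limsup_n v_n(z_0) \leq v(z_0)$ closes the argument. The delicate step is the upgrade from $\loc$ convergence to the pointwise ball inequality: it succeeds precisely because $\Omega^{\alpha_{p_0}}_{B_0, p_0}$ is smooth in the interior of $B_0$ (being given by an explicit rational formula), which lets the sub-mean-value property of $v$ do the job.
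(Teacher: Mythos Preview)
Your argument is correct, but it takes a considerably more laborious route than the paper's. Both proofs hinge on the same key input---the ball comparison from Lemma~\ref{palla-equiv}---but the paper exploits it more directly. The paper defines the function $g(z,p)$ to be $\Omega^{\alpha_p}_{\B_p(r),p}(z)$ on the tangent ball $\B_p(r)$ and $0$ outside; this is jointly continuous in $(z,p)$, and Lemma~\ref{palla-equiv} gives the \emph{pointwise} inequality $\Omega^{\alpha_{p_n}}_{D,p_n}(z)\leq g(z,p_n)$ for every $n$. Taking $\limsup_n$ and then the upper semicontinuous regularization in $z$ yields $F_{p_0}:=\bigl(\limsup_n \Omega^{\alpha_{p_n}}_{D,p_n}\bigr)^*\leq g(\cdot,p_0)$, and $F_{p_0}$ is automatically plurisubharmonic (as the USC regularization of a $\limsup$ of psh functions), hence lies in $\mathcal S_{\alpha_{p_0}}(D)$.

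What this buys: the paper never needs a lower bound on the kernels, so the continuous family of peak functions and the $\loc$ compactness step are unnecessary; and since the ball inequality is already pointwise, the ``delicate upgrade'' from integral to pointwise that you flag simply does not arise. Your approach trades these simplifications for the more analytic machinery of $\loc$ limits and the Hartogs-type estimate $\limsup_n v_n\leq v$; it works, but the extra ingredients are not doing essential work here. One small gap to patch in your write-up: you pass to a subsequence for the $\loc$ convergence, so the final inequality $\limsup_n v_n(z_0)\leq v(z_0)$ is a priori only along that subsequence; the standard remedy is to first extract a subsequence realizing the full $\limsup$ at $z_0$ before applying compactness.
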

\begin{proof} Since $D$ is relatively compact with smooth boundary, there exists $r>0$ small enough such that the ball
$\B_p(r)$ of radius $r$ internally tangent at $D$ in $p$ is
contained in $D$, for each $p\in\partial D$.

Let us define $g:D\times \partial D\to\mathbb R_{\leq0}$ as
\begin{equation}\label{g-moves}
g(z,p)=\begin{cases}\Omega_{\B_p(r),p}(z) & \emph{if } z\in \B_p(r)\\
0 & \emph{if } z\not\in \B_p(r)\end{cases}
\end{equation}
The function  $g(z,p)$ is
continuous in both variables. Moreover
$\Omega_{D,p}^{\alpha_p}(z)\leq g(z,p)$ by Lemma~\ref{palla-equiv}.

Let us fix a point $q\in\partial D$ and a sequence
$\{q_n\}\subset\partial D$, $q_n\to q$. The
functions $\Omega_{D,q_n}^{\alpha_{q_n}}(z)$ are negative,
hence uniformly bounded from above. Taking the maximum limit
for $n\to \infty$, in $\Omega_{D,q_n}^{\alpha_{q_n}}(z)\ \leq\
g(z,q_n)$
 we get
$$\limsup_{n\to\infty}\Omega_{D,q_n,}^{\alpha_{q_n}}(z)\ \leq\ g(z,q).$$
Therefore the upper semicontinuous regularization with respect
to $z$ satisfies
$$F_q(z)=(\limsup_{n\to\infty}\Omega_{D,q_n}^{\alpha_{q_n}}(z))^*\ \leq\ (g(z,q))^*\ =\ g(z,q).$$

Hence $F_q(z)\in\mathcal S_{\alpha_q}(D)$, thus
$$\limsup_{n\to\infty}\Omega_{D,q_n}^{\alpha_{q_n}}(z)\ \leq\ F_q(z)\ \leq\ \Omega_{D,q}^{\alpha_q}(z).$$
Namely $\Omega^{\alpha_p}_{D,p}$ is upper semicontinuous in $q$.
\end{proof}

\begin{proposition}\label{boundedmeas}
Let $D\subset \C^n$ be a bounded strongly pseudoconvex domain  with smooth boundary. Choose a defining couple  $(\alpha_p, \theta_p)$ of $T_p^\C\partial D$ which varies  continuously with respect to $p$.  Let $K\subset\subset D$ be a compact set. Then the
function $|\Omega^{\alpha_p}_{p,D}(z)|$ is uniformly bounded
in $p\in\partial D$ with respect to $z\in K$.
\end{proposition}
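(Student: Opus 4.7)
The strategy is to exhibit, for each $p\in\partial D$, an explicit competitor $\tilde u_p\in\mathcal S_{\alpha_p}(D)$ whose absolute value is uniformly bounded on $K$ in $p$. Since $\Omega^{\alpha_p}_{D,p}=\sup\mathcal S_{\alpha_p}(D)$, this forces $\Omega^{\alpha_p}_{D,p}\geq \tilde u_p$, and as both functions are negative, $|\Omega^{\alpha_p}_{D,p}|\leq |\tilde u_p|$ on $K$, which is what we want. By Proposition~\ref{ovvia}(1), the pluricomplex Poisson kernel transforms homogeneously under rescaling of $\alpha_p$; since the continuously varying couple $\alpha_p$ differs from the standard couple $\alpha_p^{\mathrm{std}}$ induced by the Hermitian product in $\C^n$ by a continuous positive factor, which is uniformly bounded above and below on the compact $\partial D$, it suffices to treat the case $\alpha_p=\alpha_p^{\mathrm{std}}$.

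The competitor is exactly the function from Lemma~\ref{esiste limitato}, namely
\[
\tilde u_p:=\lambda_p\, P_{\D,1}\circ f_p,
\]
where $\lambda_p:=df_p|_p(\nu_p)>0$ (by Hopf's lemma) and $f_p$ is a holomorphic peak function for $D$ at $p$. The new requirement compared with Lemma~\ref{esiste limitato} is that the family $\{f_p\}_{p\in\partial D}$ be \emph{continuous in $p$}. To produce it I would invoke Forn{\ae}ss's embedding theorem: there exist a strongly convex bounded domain $C\subset\C^N$ with smooth boundary and a holomorphic embedding $G$ of a neighborhood of $\overline D$ into $\C^N$ with $G(D)\subset C$ and $G(\partial D)\subset\partial C$. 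On the strongly convex $C$, strict convexity yields a $C^\infty$-family of holomorphic peak functions $\{F_q\}_{q\in\partial C}$ of the explicit form $F_q(w)=\exp(\langle w-q,\nu_q\rangle/\varepsilon)$ for $\varepsilon>0$ small, thanks to the inequality $\Re\langle w-q,\nu_q\rangle\leq -c|w-q|^2$ valid on $\overline C\times\partial C$ for some $c>0$. Setting $f_p:=F_{G(p)}\circ G$ then yields the desired continuous family of peak functions for $D$.

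The computation in the proof of Lemma~\ref{esiste limitato} places $\tilde u_p$ inside $\mathcal S_{\alpha_p^{\mathrm{std}}}(D)$ with the exact prescribed boundary limit along every $\gamma\in\Gamma_p$. To bound $|\tilde u_p|$ on $K$ uniformly in $p$, note that $\lambda_p$ is continuous in $p$ and hence bounded by some $\Lambda<\infty$ on $\partial D$, while by joint continuity and positivity of $(q,w)\mapsto|1-F_q(w)|$ on the compact set $\partial C\times G(K)$ there exists $\delta>0$ with $|1-f_p(z)|\geq\delta$ for all $(p,z)\in\partial D\times K$. Therefore
\[
|\tilde u_p(z)|=\lambda_p\,\frac{1-|f_p(z)|^2}{|1-f_p(z)|^2}\leq\frac{\Lambda}{\delta^2}
\]
uniformly in $(p,z)\in\partial D\times K$, as required.

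The main obstacle is precisely the \emph{uniformity in $p$} of the competitor: constructing a peak function at a single boundary point of a strongly pseudoconvex domain is classical, but producing a $p$-continuous family directly is not immediate in the pseudoconvex setting. The Forn{\ae}ss embedding neatly sidesteps this by reducing the question to the strongly convex ambient $C$, where the explicit exponential formula trivially delivers such a family.
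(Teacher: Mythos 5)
Your proof is correct and follows essentially the same route as the paper's: both invoke Forn{\ae}ss's embedding theorem to pass to a strongly convex domain, construct a $p$-continuous family of exponential peak functions pulled back through the embedding, show that a suitable multiple of $P_\D$ composed with the peak function lies in $\mathcal S_{\alpha_p}(D)$, and conclude via continuity and compactness that the resulting competitors are uniformly bounded on $K$. The only cosmetic differences are that you normalize to the standard defining couple first (where the paper works with the couple $\theta_p^F(v)=\langle dF_p(v),\nu'_{F(p)}\rangle$ induced by the embedding and then rescales), and you insert an inessential factor $1/\varepsilon$ in the exponent.
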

\begin{proof}
By Forn\ae ss' embedding theorem \cite[Theorem 9]{Fo} there exist $N\geq n$, a smooth bounded strongly convex domain $D'\subset \mathbb C^N$ and a holomorphic map $F:\C^n\to \C^N$ such that $F$ is a biholomorphism on the image, which is a closed subvariety of $\C^N$, $F(D)\subset D'$,   $F(\partial D)\subset \partial D'$ and $F(\C^n)$ intersects transversally $\partial D'$.

Let $q\in \partial D'$, and let $\nu'_q$ be the outer unit normal
 vector at $\partial D'$ in $q$.  
 
 The transversality in Forn\ae ss' embedding theorem allows to define a defining couple $(\alpha_p^F, \theta_p^F)$ for $T_p\partial D$ by setting
\[
\theta_p^F(v):=\langle dF_p(v), \nu'_{F(p)}\rangle,
\]
where $\langle\cdot,\cdot\rangle$ is the standard hermitian product in $\mathbb C^N$,
and $\alpha_p^F:=\pm\Im  (\theta_p^F|_{T_p\partial D})$, where the sign is chosen so that $a_p \alpha_p^F=\alpha_p$ for some $a_p>0$.   Note that $(\theta_p^F, \alpha_p^F)$ varies continuously in $p$. Since also $\alpha_p$ varies continuously in $p$ and $\partial D$ is compact, it turns out that $a_p$ is continuous in $p$ and there exists $A>0$ such that for all $p\in \partial D$,
\[
a_p\leq A.
\]
 
 Let us define
$\phi:D'\times\partial D'\to\C$ as
$$\phi(w,q)\ =\ \langle w-q,\nu'_q\rangle,\ \ \ \forall (w,q)\in D'\times\partial D',$$

The function $h: D\times\partial D\to \mathbb C$,
$$h(z,p)\ =\ \exp(\phi(F(z),F(p))), \ \ \ \forall (z,p)\in D\times\partial D$$
is continuous in $p$, and ---for each fixed $p\in\partial D$---
is a strong peak function in $p$ for the domain $D$,  $\mathcal
C^1$-smooth up to the boundary. Moreover since $D'$ is strongly
convex, it follows that for all $z\in D$ and $p\in \de D$
\[
\Re\la F(z)-F(p),\nu'_{F(p)}\ra < 0,
\]
and hence $h(z,p)\in \D$ for all $z\in D$ and $p\in \de D$ and
$h(p,p)=1$.

Hence, for each fixed $p\in \partial D$, $P_{\D}(h(z,p))$ is a negative plurisubharmonic function in $D$. Moreover, let  $\gamma:[0,1]\to D\cup \{p\}$ be a $C^\infty$ curve such that $\gamma(t)\in D$ for all $t\in [0,1)$, $\gamma(1)=p$  and $\gamma'(1)\not\in T_p\partial D$. Then
\begin{equation*}
\begin{split}
\lim_{t\to 1} P_\D(h(\gamma(t), p))(1-t)&=\lim_{t\to 1}-\Re \left(\frac{1+\exp(\langle F(\gamma(t))-F(p), \nu'_{F(p)}\rangle)}{1-\exp(\langle F(\gamma(t))-F(p), \nu'_{F(p)}\rangle)}   (1-t)\right)\\&=-2\Re \frac{1}{\langle dF_p(\gamma'(1)),\nu'_{F(p)}\rangle},
\end{split}
\end{equation*}
Therefore, $P_{\D}(h(\cdot ,p))\in \mathcal S_{\alpha^F_p}(D)$ and, by Proposition~\ref{ovvia}, $a_p P_{\D}(h(\cdot ,p))\in \mathcal S_{\alpha_p}(D)$ for all $p\in \partial D$. Hence, 
$$ a_p P_{\D}(h(z,p)) \leq \Omega_{p,D}^{\alpha_p}(z) \leq\ 0.$$

By continuity, there exists a positive constant $M_K>0$ such
that $|P_\D \circ h(z,p)|\leq M_K$ for all $z\in K$ and $p\in\partial D$. Thus
$$|\Omega_{p,D}^{\alpha_p}(z)|\ \leq\ a_p|P_{\mathbb D}(h(z,p))| \leq a_pM_K\leq A M_K,\quad \forall z\in K,$$
proving the statement.
\end{proof}

\section{Pluricomplex Poisson kernel vs Pluricomplex Green function}

The aim of this section is to relate the pluricomplex Poisson kernel with the pluricomplex Green function of a bounded strongly
pseudoconvex domain in $\C^n$.

Recall that, given a hyperconvex bounded domain $D\subset \C^n$ the {\sl pluricomplex Green function} of $D$ with pole $z\in D$ (see, {\sl e.g.}, \cite{Kl}) is 
\[
G_D(z,w):=\sup\{u(w): u<0, u\in \hbox{psh}(D), \limsup_{w\to z}[u(w)-\log |w-z|]<+\infty\}.
\]
The function $G_D(z,\cdot)$, extended by $0$ on $\partial D$, is continuous  on $\overline{D}\setminus\{z\}$,  plurisubharmonic and maximal. Moreover, $G_D(\cdot, \cdot)$ is continuous (as function with values in  $[-\infty, 0]$) on
$D\times \overline{D}$ (see \cite[Th\'eor\`em~(0.6)]{De})

In case $D$ is smooth and strongly convex, Lempert \cite{Le1}  showed that $G_D$ is symmetric in $(z,w)$ and it is smooth on $\overline{D}\times \overline{D}\setminus \{(z,w): z=w\}$. In case $D$ is smooth and strongly pseudoconvex $G_D(z,\cdot)$ is in general not $C^2$ and it is in general not symmetric in $(z,w)$ (see \cite{BD}). Actually, the symmetry in $(z,w)$ is equivalent to the plurisubharmonicity of the function $G_D(\cdot, w)$ for all fixed $w\in D$.

 However, by results of Guan \cite{Gu} and B\l ocki \cite{Bl},  the
pluricomplex Green function $G_D(z,w)$ of a bounded strongly
pseudoconvex domain in $\C^n$ with pole at $z$ is $C^{1,1}$
with respect to $w\in \overline{D} \setminus\{z\}.$
In what follows we need this slight extension of the previous result: 
\begin{lemma}\label{stime}
Let $D\subset \C^n$ be a bounded strongly
pseudoconvex domain with smooth boundary. Let $p\in \partial D$ and denote by $\nu_p$ the outer unit  normal vector to $\partial D$ at $p$. For every compact set $K_1\subset\subset D$ there exist $h_0>0$ and $C>0$ such that 
\[
\left| \frac{\partial G_D(z,p)}{\partial \nu_p}-\frac{\partial G_D(z,p-t\nu_p)}{\partial \nu_p}  \right| \leq Ct
\]
for all $t\in [0,h_0]$ and $z\in K_1$.
\end{lemma}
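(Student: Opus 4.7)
The plan is to derive the estimate from the $C^{1,1}$ regularity of $w\mapsto G_D(z,w)$ on $\overline{D}\setminus\{z\}$ (Guan \cite{Gu}, B\l ocki \cite{Bl}), applied with the Lipschitz constant of the gradient $\nabla_w G_D(z,\cdot)$ taken uniformly in $z\in K_1$.

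First I would fix a neighborhood of $p$ in $\overline{D}$ well separated from $K_1$. Since $K_1\subset\subset D$ and $p\in\partial D$, there exists $h_0>0$ small enough so that $V:=\overline{B(p,2h_0)}\cap\overline{D}$ satisfies $\dist(K_1,V)\geq \delta>0$ and contains the whole normal segment $\{p-t\nu_p:t\in[0,h_0]\}$; this uses only the smoothness of $\partial D$ at $p$ and the fact that $\nu_p$ points outward.

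Next I would apply the Guan--B\l ocki $C^{1,1}$ regularity theorem: for each fixed $z\in K_1$, $w\mapsto G_D(z,w)$ is $C^{1,1}$ on $\overline{D}\setminus\{z\}\supset V$, so the real gradient $\nabla_w G_D(z,\cdot)$ is Lipschitz on $V$. The crucial point is that the Lipschitz constant can be chosen uniformly for $z\in K_1$: inspecting the barrier construction in \cite{Gu,Bl}, the $C^{1,1}$ bound depends only on the $C^\infty$-geometry of $\partial D$, a strictly plurisubharmonic defining function of $D$, and a positive lower bound on $|z-w|$. The first two quantities are fixed by $D$ itself, while the last is uniformly controlled by $|z-w|\geq\delta$ for $z\in K_1$ and $w\in V$. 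Consequently there exists $C>0$ such that
\[
|\nabla_w G_D(z,w_1)-\nabla_w G_D(z,w_2)|\leq C\,|w_1-w_2|\qquad \text{for all } z\in K_1,\ w_1,w_2\in V.
\]

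Specializing to $w_1=p$ and $w_2=p-t\nu_p\in V$ and using $|\nu_p|=1$ together with Cauchy--Schwarz, one obtains
\[
\left|\frac{\partial G_D(z,p)}{\partial\nu_p}-\frac{\partial G_D(z,p-t\nu_p)}{\partial\nu_p}\right|\leq |\nabla_w G_D(z,p)-\nabla_w G_D(z,p-t\nu_p)|\leq Ct,
\]
which is the required bound. The main obstacle is precisely isolating the uniformity of the Guan--B\l ocki $C^{1,1}$ constant as the pole $z$ ranges over the compact set $K_1\subset\subset D$; once this is read off from a quantitative form of their barrier argument, the rest of the proof is essentially an application of the mean value theorem to the Lipschitz function $\nabla_w G_D(z,\cdot)$ along the normal segment.
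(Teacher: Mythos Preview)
Your proposal is correct and follows essentially the same route as the paper: both reduce the lemma to the uniformity in $z\in K_1$ of B\l ocki's $C^{1,1}$ estimates for $G_D(z,\cdot)$ near $\partial D$, and then read off the desired Lipschitz bound on $\partial G_D(z,\cdot)/\partial\nu_p$ along the normal segment. The paper carries out in detail the ``inspection'' you leave as an assertion: it tracks that B\l ocki's constants (from \cite{Bl2}) depend on $z$ only through $\dist(z,\partial D)$ and the Hopf-type quantity $b(z)=\liminf_{w\to\partial D}|G_D(z,w)|/\dist(w,\partial D)$, and then verifies $\inf_{K_1} b>0$ via an interior-tangent-ball barrier; it also routes the limit through the smooth approximants $g_z^{\epsilon,\delta}$ and Arzel\`a--Ascoli rather than invoking the Lipschitz gradient directly, but that is a presentational choice rather than a different idea.
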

\begin{proof}
We follow the argument in \cite{Bl2}.

Fix $z\in D$. For a given $\epsilon>0$ let $D_{\epsilon} := D \setminus \overline{B(z,\epsilon)}$, where $B(z,\epsilon)$ is the Euclidean ball of center $z$ and radius $\epsilon>0$.

For a fixed $z\in K_1$, B\l ocki in \cite{Bl2} proves that given $\epsilon>0$ small and $\delta\in (0,1)$ there exist  continuous functions $g_z^{\epsilon}(w)$ converging locally uniformly to $G_D(z,\cdot)$ in 
$D\setminus \{z\}$ as $\epsilon $ goes to zero (see \cite[Eq. (2.2)]{Bl2}) and that there exist  functions $g_z^{\epsilon,\delta}$ which are smooth on  $\overline{D_{\epsilon}}$ (see \cite[Prop. 2.2]{Bl2}) and     
uniformly converge to $g_z^{\epsilon}$ in $D_\epsilon$ as $\delta$ goes to zero (see  \cite[Eq. (2.3)]{Bl2}). Actually, in \cite{Bl},  the functions $g_z^{\epsilon}$ and $g_z^{\epsilon,\delta}$ are denoted by $g^\epsilon$ and $g^{\epsilon,\delta}$ (without the subscript $z$). For the sake of clarity, we prefer to indicate here also the corresponding pole.

Take $\tilde\epsilon>0$ such that $\tilde K_1:=\bigcup_{z\in K_1}\overline{B(z,\tilde\epsilon)}\subset\subset D$ and let $h_0>0$ be such that $p-t\nu_p\in D\setminus \tilde K_1$ for all $t\in (0,h_0]$.

\medskip

{\sl Claim:} There exist $\epsilon_0\in (0,\tilde\epsilon)$, $\delta_0\in (0,1)$ and  $C>0$  such that for all $t\in [0,h_0]$, $z\in K_1$, $\delta<\delta_0$ and $\epsilon<\epsilon_0$,
\begin{equation}\label{Blo1}
|\nabla g_z^{\epsilon, \delta}(p-t\nu_p)|+|\nabla^2 g_z^{\epsilon, \delta}(p-t\nu_p)|\leq C.
\end{equation} 
 
 \medskip

Assuming the claim for the moment, the proof ends as follows.  Let $\epsilon\in (0,\epsilon_0)$. 
The estimate \eqref{Blo1}  implies that the first and second derivatives of the functions $[0,h_0]\ni t\mapsto v_z^{\epsilon,\delta}(t):=g_z^{\epsilon,\delta}(p-t\nu_p)$ are uniformly bounded. By the Mean Value Theorem, this implies also that the $v_z^{\epsilon,\delta}$ and their first derivatives  are Lipschitz in $[0,h_0]$ with uniform constant $C>0$ (independent of $z\in K_1, \epsilon, \delta$). Since for any fixed $t\in (0,h_0]$, $g_z^{\epsilon,\delta}(p-t\nu_p)$ converges to $g_z^{\epsilon}(p-t\nu_p)$ as $\delta\to 0$, it follows that  also the $\{v_z^{\epsilon,\delta}\}$ are uniformly bounded. By Arzel\`a-Ascoli's Theorem, for $z\in K_1$ and $\epsilon$ fixed, up to extracting subsequences, we can assume that $\{v_z^{\epsilon,\delta}\}$  converges uniformly as $\delta\to 0$ in the $C^1$-topology of $[0,h_0]$ to a function $v_z^{\epsilon}:[0,h_0]\to \R$. Clearly,  $v_z^{\epsilon}$ and their first derivatives are uniformly bounded and Lipschitz in $[0,h_0]$, with uniform Lipschitz constant $C>0$.  Since $v_z^{\epsilon, \delta}(t)=g_z^{\epsilon,\delta}(p-t\nu_p)$ and these latter functions converge to $g_z^\epsilon(p-t\nu_p)$ as $\delta\to 0$, it follows that the functions $(0,h_0]\ni t\mapsto g_z^\epsilon(p-t\nu_p)$ can be extended $C^1$ on $[0,h_0]$ and they are, together with their first derivatives,  uniformly Lipschitz in $[0,h_0]$. 

Repeating the previous argument with the functions $v_z^\epsilon$ instead of $v_z^{\epsilon,\delta}$ and taking the limit for $\delta\to 0$, we see that also $G_D(z,p-t\nu_p)$ and its derivative with respect to $t$, that is $-\frac{\partial G_D(z,p-t\nu_p)}{\partial \nu_p}$, are uniformly Lipschitz in $t\in [0,h_0]$ independently of $z\in K_1$, and we are done.

We are left to prove the claim.  
In \cite[Thm. 1.1 and Thm. 3.1]{Bl2} it is proved that for any $z\in K_1$  there exist $\epsilon_z>0$, $\delta_z\in (0,1)$ and a constant $C_z>0$ such that  for all $\epsilon\in (0,\epsilon_z)$ and $\delta\in (0,\delta_z)$,
\begin{equation*}
\begin{split}
|\nabla g_z^{\epsilon, \delta}(w)|&\leq \frac{C_z}{|w-z|}  \quad \forall w\in D_\epsilon\\
|\nabla^2 g_z^{\epsilon, \delta}(w)|&\leq \frac{C_z}{|w-z|^2} \quad \forall w\in D_\epsilon.
\end{split}
\end{equation*}

Note that, if $\epsilon<\tilde\epsilon$ and $t\in [0, h_p]$, then $p-t\nu_p\in D_\epsilon$. Therefore the previous estimate holds in particular for $w=p-t\nu_p$, $t\in (0,h_0]$. Moreover, $\min\{|p-t\nu_p-z|: t\in [0,h_0], z\in K_1\}>0$. Hence, in order to prove \eqref{Blo1}, it is enough to show that there exist $C'>0$, $\delta_0\in (0,1)$ and $\epsilon_0\in (0,\tilde\epsilon)$ such that $\delta_z\geq \delta_0$, $\epsilon_z\geq \epsilon_0$ and $C_z\leq C'$ for all $z\in K_1$ and $w=p-t\nu_p$, $t\in (0,h_0]$. 

Analyzing B\l ocki's proof, one can check that the dependence of $\delta_z$, $\epsilon_z$ and $C_z$ on $z$ is via the distance of $z$ from $\partial D$ and  a constant $b(z)$ (defined and called just $b$ at \cite[pag. 348]{Bl2}) which is defined by
\[
b(z):=\liminf_{w\to \partial D}\frac{|G_D(z,w)|}{\hbox{dist}(w,\partial D)}.
\]
In particular, one can check that if $\inf_{z\in K_1}\hbox{dist}(z,\partial D)>0$ (which is the case since $K_1$ is compact in $D$) and $\inf_{z\in K_1}b(z)>0$ then there exist $C', \epsilon_0, \delta_0>0$ such that $C_z\leq C'$, $\delta_z\geq \delta_0$ and $\epsilon_z\geq \epsilon_0$ for all $z\in K_1$.

So we are left to show that $\inf_{z\in K_1}b(z)>0$. To this aim,  let $\bar{r}$ be a positive constant such that for all $p \in \partial D$ the open ball of radius 
$2 \bar{r}$  tangent to $\partial D$ in $p$ is contained in $D$.
Let  $T_{\bar{r}} :=  \{ w \in D : \hbox{dist}(w, \partial D) \geq \bar{r} \}$ and replace the $\gamma$ at pag. 348 in \cite{Bl2} with  $\gamma := \max_{ K_1 \times T_{\bar{r}}} G(z,w)$. 
Then, following again the argument in the proof of \cite[Theorem 1.1]{Bl2} at p. 348, we see that there exists $\beta>0$ such that    $b(z) \geq \beta > 0$ for any $z \in K_1$, and we are done.
     \end{proof}

\begin{lemma}\label{Lem:greatS}
Let $D\subset \C^n$ be a bounded strongly
pseudoconvex domain with smooth boundary. Let $\nu_p$ denote the outer unit  normal vector to $\partial D$ at $p$. Suppose that the pluricomplex Green function $G_D$ is symmetric,  then:
\begin{enumerate}
 \item    $D\ni z \mapsto  - \frac{\partial G_D(z,p)}{\partial \nu_p} $ is a  maximal plurisubharmonic function in $D$ for all  $p\in \partial D$.
\item For every $ p $ and $q$ in $  \partial D$ with $p \neq q,$  $\lim_{z\to q}  \frac{\partial G_D(z,p)}{\partial \nu_p} = 0. $ 
\end{enumerate}
\end{lemma}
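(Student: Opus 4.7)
My plan is to attack both parts via the approximating family $v_t(z) := G_D(z, p-t\nu_p)/t$ for small $t>0$. Each $v_t$ is plurisubharmonic in $z$ by the defining property of $G_D$ in its first variable, and maximal on $D \setminus \{p-t\nu_p\}$ since $G_D(\cdot, w)$ solves the homogeneous complex Monge-Amp\`ere equation away from its pole. Using $G_D(z,p) = 0$ together with $\frac{d}{ds}G_D(z, p - s\nu_p) = -\frac{\partial G_D(z, p-s\nu_p)}{\partial \nu_p}$, integration gives
\[
v_t(z) = -\frac{1}{t}\int_0^t \frac{\partial G_D(z, p-s\nu_p)}{\partial \nu_p}\, ds,
\]
so Lemma~\ref{stime} yields $|v_t(z) - v(z)| \leq Ct/2$ uniformly on every compact $K_1 \subset\subset D$, where $v(z) := -\frac{\partial G_D(z,p)}{\partial \nu_p}$.

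Part (1) is then immediate: uniform limits of plurisubharmonic functions are plurisubharmonic, so $v \in \ps(D)$; and for any compact $K \subset D$ the point $p-t\nu_p$ lies outside $K$ for $t$ small, so $(dd^c v_t)^n \equiv 0$ on $K$, and Bedford-Taylor's continuity of the Monge-Amp\`ere operator under locally uniform convergence of locally bounded PSH functions forces $(dd^c v)^n \equiv 0$ on $K$, hence on all of $D$.

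For (2) I would use symmetry to rewrite $\frac{\partial G_D(z,p)}{\partial \nu_p} = \frac{\partial G_D(\cdot, z)}{\partial \nu_p}(p)$, reducing the claim to showing that the normal derivative at $p$ of $w \mapsto G_D(w, z)$ tends to $0$ as $z \to q$. Fix a neighborhood $V_p$ of $p$ in $\overline{D}$ with $q \notin \overline{V_p}$. Demailly's joint continuity of $G_D$ on $D \times \overline{D}$, combined with symmetry and the vanishing of $G_D$ on $\partial D$, gives $\sup_{w \in \overline{V_p}\cap\overline{D}} |G_D(w,z)| \to 0$ as $z \to q$. On $\overline{V_p}$ the pole $z$ stays far from $p$, so the Guan-B\l ocki $C^{1,1}$ regularity of $G_D(\cdot, z)$ provides a $C^{1,1}$ bound that one would argue is uniform for $z$ in a small closed neighborhood of $q$ bounded away from $p$. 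A Taylor-expansion interpolation between this uniform $C^{1,1}$ bound and the $C^0$-smallness upgrades the convergence $G_D(\cdot, z) \to 0$ on $V_p$ to $C^1$, so the normal derivative at $p$ tends to $0$.

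The main obstacle is securing the uniform $C^{1,1}$ control, because the B\l ocki estimates recalled in the proof of Lemma~\ref{stime} carry constants depending on $\mathrm{dist}(z,\partial D)$ and on B\l ocki's quantity $b(z)$, both of which degenerate as $z \to q \in \partial D$. The most robust workaround, and the one I would ultimately commit to, is to pass through the entrapping construction of Section~\ref{Trap}. Let $B \subset D \subset W$ be strongly convex with common boundary in a neighborhood $U$ of $p$: then monotonicity of Green functions yields
\[
0 \leq \frac{\partial G_D(z,p)}{\partial \nu_p} \leq \frac{\partial G_B(z,p)}{\partial \nu_p} \quad \text{for } z \in B,
\]
and the right-hand side tends to $0$ as $z \to q \in \partial B \cap U$ by Lempert's $C^\infty$-smoothness of $G_B$ on $\overline{B}\times\overline{B}$ off the diagonal (indeed $G_B(q',\cdot) \equiv 0$ for $q' \in \partial B$, forcing the derivative at $p$ to vanish there). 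For $q \in \partial D \setminus \overline{U}$ one repeats the entrapping centered at $q$, or combines the maximality from part (1) with a strongly pseudoconvex peak function at $q$ as a barrier to propagate the boundary vanishing across $\partial D \setminus \{p\}$.
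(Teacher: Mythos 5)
Your argument for part~(1) is essentially the paper's: both use the approximating family $v_t(z) = G_D(z,p-t\nu_p)/t$, invoke symmetry to make $z\mapsto G_D(z,w)$ maximal plurisubharmonic, and apply Lemma~\ref{stime} to obtain locally uniform convergence, from which maximality of the limit follows. (One small imprecision: the plurisubharmonicity of $v_t$ in $z$ is not ``the defining property of $G_D$ in its first variable'' but exactly the symmetry hypothesis, as the paper notes.)

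Part~(2) has a genuine gap, at two levels. First, the monotonicity inequality you write is reversed. Since $B\subset D\subset W$ forces $G_W\le G_D\le G_B$ on $B\times B$, and all three vanish at $p$, taking inward difference quotients gives
\[
0\le \frac{\partial G_B(z,p)}{\partial \nu_p}\le \frac{\partial G_D(z,p)}{\partial \nu_p}\le \frac{\partial G_W(z,p)}{\partial \nu_p},
\]
so the upper bound you need comes from the \emph{outer} convex domain $W$, not from $B$; as written, your inequality fails. Second, and more seriously, even after fixing the sign this only works when $q$ lies on the arc $\partial D\cap U=\partial W\cap U$ near $p$. For $q\in\partial D\setminus \overline{U}$ the point $q$ is generically an interior point of $W$, so $\frac{\partial G_W(z,p)}{\partial \nu_p}\to \frac{\partial G_W(q,p)}{\partial \nu_p}\ne 0$ and the squeeze gives nothing. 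Your two fallbacks do not close this: re-entrapping centered at $q$ gives $B_q\subset D\subset W_q$ with boundaries matching near $q$, but now $p$ sits in the interior of $W_q$, so $G_{W_q}(\cdot,p)$ does not vanish at $p$ and there is no normal-derivative comparison there at all; and ``maximality plus a peak function at $q$'' would require a lower barrier for the maximal function $f=-\partial_{\nu_p}G_D(\cdot,p)$ near $q$, which is precisely what is missing. (Your first, non-committed route also stalls, as you acknowledge: the B\l ocki constants depend on $\mathrm{dist}(z,\partial D)$ and on $b(z)$, which degenerate as $z\to q\in\partial D$.)

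The paper circumvents all of this with Forn\ae ss' embedding $\Phi:\C^n\to\C^m$ with $\Phi(D)\subset C$ strongly convex and, crucially, $\Phi(\partial D)\subset\partial C$ \emph{globally}. Then $u(z,h):=G_C(\Phi(z),\Phi(p-h\nu_p))$ is a negative psh competitor with log-pole at $x_h=p-h\nu_p$, hence $u(\cdot,h)\le G_D(\cdot,x_h)$; dividing by $h$ and letting $h\to 0$ gives the global lower bound $-\partial_{\nu_p}G_D(z,p)\ge -\partial_{\nu_{\tilde p}}G_C(\Phi(z),\tilde p)=\Omega_{C,\tilde p}(\Phi(z))$, and the right side tends to $0$ as $z\to q$ for \emph{every} $q\ne p$ because $\Phi(q)\in\partial C\setminus\{\tilde p\}$. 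This global boundary alignment is exactly what the local entrapping of Section~\ref{Trap} cannot provide, so your argument cannot be completed as proposed.
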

\begin{proof}
(1)
Fix $p \in \partial D$. Since $G_D$ vanishes on the boundary of $D$, then 
\[
  - \frac{\partial G_D(z,p)}{\partial \nu_p}  =  \lim_{h\to 0^+}  \frac{G_D(z,p- h \nu_p)}{h}. 
 \] 
Now if $z$ varies on a compact set $K$ of $D$, by Lemma \ref{stime}, and by the Mean Value Theorem, for $h>0$ small, we have  
\[ 
\left| \frac{G_D(z,p- h\nu_p)}{h} + \frac{\partial G_D(z,p)}{\partial \nu_p} \right| = \left| -\frac{\partial G_D(z,p-t\nu_p)}{\partial \nu_p} + \frac{\partial G_D(z,p)}{\partial \nu_p}\right| \leq Ct, 
\] 
where $t\in (0,h)$. 

Hence the functions $z \mapsto    \frac{G_D(z,p- h \nu_p)}{h}$  converge locally uniformly in $z\in D$ to  $z\mapsto - \frac{\partial G_D(z,p)}{\partial \nu_p}$.
 Now for every fixed $ h>0$  the function $ D\ni z \mapsto G_D(z,p - h \nu_p)$ is maximal plurisubharmonic since we are assuming it is symmetric. From this and the locally uniformly convergence, (1) follows.

(2) Fix $q\in\partial D\setminus\{p\}$. By \cite[Theorem 9]{Fo} there exist $m\geq n$, a smooth bounded strongly convex domain $C\subset \C^m$ and a  holomorphic map  $\Phi: \C^n \to  \C^m$, so that $\Phi$ is a biholomorphism onto its image and $\Phi(\C^n)$ is a closed subvariety of $\C^m$. Moreover,  $\Phi(D)\subset C$, $\Phi(\partial D)\subset \partial C$ and $\Phi(\C^n)$ intersects transversally $\partial C$.

Let $\tilde p:=\Phi(p)$. Since $d\Phi_p(T_p\partial D)\subset T_{\tilde p}\partial C$ and $\Phi(\C^n)$ intersects transversally $\partial C$, it follows that $d\Phi_p(\nu_p)\not\in T_{\tilde p} \partial C$. Thus, up to an affine change of coordinates in $\C^m$, we can assume that   $d\Phi_p(\nu_p)=\nu_{\tilde p}$, where $\nu_{\tilde p}$ denotes the outer unit normal vector to $\partial C$ at $\tilde p$. 

Fix $h>0$ and let  $u(z,h):=G_C(\Phi(z), \Phi(p-h\nu_p))$ and $x_h:=p-h\nu_p$. Note that $u(\cdot ,h)$ is a negative plurisubharmonic function in $D$. Moreover, let $C:=\sup_{z\in D}\log\frac{\|\Phi(z)-\Phi(x_h)\|}{\|z-x_h\|}$. Note that $C<+\infty$ since $\Phi$ is holomorphic in $\C^n$. Then,
\begin{equation*}
\begin{split}
&\limsup_{z\to x_h}(u(z,h)-\log \|z-x_h\|)=\limsup_{z\to x_h}(u(z,h)-\log \|\Phi(z)-\Phi(x_h)\|\\&+\log\frac{\|\Phi(z)-\Phi(x_h)\|}{\|z-x_h\|})\\&\leq 
\limsup_{z\to x_h}(G_C(\Phi(z), \Phi(p-h\nu_p))-\log \|\Phi(z)-\Phi(x_h)\|+C)<+\infty,
\end{split}
\end{equation*}
where $\limsup_{z\to x_h}(G_C(\Phi(z), \Phi(p-h\nu_p))-\log \|\Phi(z)-\Phi(x_h)\|)<+\infty$ by the very definition of pluricomplex Green function. Since $G_D(\cdot, x_h)$ is the supremum of all negative plurisubharmonic functions in $D$ having at most a log-singularity at $x_h$, it follows that $u(z,h)\leq G_D(z, p-h\nu_p)$ for all  $z\in D$. Therefore,
\begin{equation*}
\begin{split}
 0\geq - \frac{\partial G_D(z,p)}{\partial \nu_p} &=\lim_{h\to 0}\frac{G_D(z, p-h\nu_p)}{h}\geq \lim_{h\to 0}\frac{G_C(\Phi(z), \Phi(p-h\nu_p))}{h}=- \frac{\partial G_C(\Phi(z),\tilde p)}{\partial \nu_{\tilde p}}.
\end{split}
\end{equation*}
Since, by \cite[Thm. 6.1]{BPT}, $- \frac{\partial G_C(\Phi(z),\tilde p)}{\partial \nu_{\tilde p}}=\Omega_{\tilde C, \tilde p}(\Phi(z))$ and hence 
$\lim_{z\to q}- \frac{\partial G_C(\Phi(z),\tilde p)}{\partial \nu_{\tilde p}}=0$, we are done.   
 \end{proof}

Now we relate the pluricomplex Green function with the pluricomplex Poisson kernel. First of all, notice that if $D\subset \C^n$ is a bounded strongly pseudoconvex domain in $\C^n$ with smooth boundary, one can choose at each point the outer unit normal vector $\nu_p$, and the map $\partial D\ni p\mapsto \nu_p$ is smooth. At each point $p\in\partial D$ we  can then choose the defining couple  for $T_p\partial D$ given by $\tilde\theta_p(v)=-i\langle v, \nu_p\rangle$, $v\in \C^n$, and $\tilde\alpha_p:=\Im \theta_p|_{T_p\partial D}$. These defining couples vary continuously with $p$. With this choice, we denote 
\[
\Omega_{D,p}:=\Omega_{D,p}^{\tilde\alpha_p}.
\]

\begin{proposition}\label{Prop:uguali}
Let $D\subset \C^n$ be a bounded strongly
pseudoconvex domain  with smooth boundary. For each $p\in \partial D$, let $\nu_p$ denote the outer unit normal vector to $\partial D$. Suppose that the pluricomplex Green function $G_D$ is symmetric.  Then for all $z\in D$,
\[
 - \frac{\partial G_D(z,p)}{\partial \nu_p}=\Omega_{D,p}(z).
\]
\end{proposition}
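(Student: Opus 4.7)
The approach is to apply the uniqueness criterion of Proposition~\ref{Prop:unique} to the function $u(z) := -\partial G_D(z,p)/\partial \nu_p$. By Lemma~\ref{Lem:greatS}, $u$ is already known to be maximal plurisubharmonic in $D$ and to extend continuously as zero to $\partial D \setminus \{p\}$. Hence the only remaining task is to verify the asymptotic normalization
\[
\lim_{z \to p}\frac{\Omega_{D,p}(z)}{u(z)} \;=\; 1,
\]
after which Proposition~\ref{Prop:unique} forces $u = \Omega_{D,p}$.

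To check this limit, I would invoke the entrapping trick of Subsection~\ref{subset-tecn}. Up to a biholomorphism extending smoothly to the boundary (under which $\Omega_{D,p}$ transforms by Proposition~\ref{ovvia}(2) and $\partial G/\partial \nu_p$ transforms by the same positive factor via the chain rule, the tangential derivatives of $G$ at $\partial D$ being zero), one may assume that (H1) holds: smooth bounded strongly convex domains $B, W \subset \C^n$ and an open neighborhood $U$ of $p$ with $B \subset D \subset W$ and $B \cap U = D \cap U = W \cap U$. In particular $B, D, W$ share the outer unit normal $\nu_p$ and a common compatible defining couple at $p$.

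The heart of the argument is then a squeeze. For the convex domains $B$ and $W$, the identity $-\partial G_X(\cdot,p)/\partial \nu_p = \Omega_{X,p}$ is already known by \cite[Thm. 6.1]{BPT}. Monotonicity of the pluricomplex Green function gives $G_B \geq G_D \geq G_W$ on $B \times B$; evaluating the second variable at $p - h\nu_p$, dividing by $h > 0$, and letting $h \to 0^+$ (each one-sided limit existing by Lemma~\ref{stime}), one obtains
\[
\Omega_{W,p}(z) \;\leq\; u(z) \;\leq\; \Omega_{B,p}(z), \qquad z \in B.
\]
Combined with \eqref{Eq:dineq}, which gives the analogous sandwich $\Omega_{W,p} \leq \Omega_{D,p} \leq \Omega_{B,p}$ for the kernel on $D$, and with Proposition~\ref{Prop:M2} which yields $\lim_{z\to p}\Omega_{W,p}(z)/\Omega_{B,p}(z) = 1$, a two-sided squeeze forces both $u(z)/\Omega_{B,p}(z)$ and $\Omega_{D,p}(z)/\Omega_{B,p}(z)$ to tend to $1$ as $z \to p$, which gives the required limit above.

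The main obstacle I anticipate is essentially bookkeeping: one must confirm that the compatible defining couple implicit in the entrapping setup is consistent, up to a single positive multiplicative constant, with the specific couple $(\tilde\alpha_p, \tilde\theta_p)$ used to define $\Omega_{D,p}$, and that the convex-case equality $-\partial G_X(\cdot,p)/\partial \nu_p = \Omega_{X,p}$ for $X = B, W$ from \cite[Thm. 6.1]{BPT} plugs into the chain of inequalities with the same normalization as Proposition~\ref{Prop:M2}. Once these constants are tracked---and they match because both $\Omega$ and $-\partial G/\partial \nu_p$ scale identically when one rescales the defining couple---the squeeze above and Proposition~\ref{Prop:unique} complete the proof.
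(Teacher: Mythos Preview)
Your proposal is correct and follows essentially the same route as the paper: reduce to the asymptotic normalization via Lemma~\ref{Lem:greatS} and Proposition~\ref{Prop:unique}, then use the entrapping configuration (H1), the monotonicity $G_W\leq G_D\leq G_B$, the convex identity $-\partial G_X/\partial\nu_p=\Omega_{X,p}$ from \cite[Thm.~6.1]{BPT}, and the squeeze coming from \eqref{Eq:dineq} and Proposition~\ref{Prop:M2}. The bookkeeping issue you flag is exactly what the paper handles by fixing a compatible defining couple $(\alpha',\theta')$ and noting that $\Omega_{D,p}=c\,\Omega^{\alpha'}_{D,p}$ (same constant $c$ for $B,D,W$), so the ratios are unaffected.
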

\begin{proof}
By Lemma~\ref{Lem:greatS} and Proposition~\ref{Prop:unique} we only need to show that 
\[
\lim_{z\to p}\frac{ - \frac{\partial G_D(z,p)}{\partial \nu_p}}{\Omega_{D,p}(z)}=1.
\]
Arguing as in Subsection~\ref{subset-tecn}, we can assume that $B\subset D\subset W$ where $B$ and $W$ are strongly convex domains with smooth boundaries and $\partial D$, $\partial B$ and $\partial W$ coincide near $p$.  Let $\Omega_{B,p}$ ({\sl respectively} $\Omega_{W,p}$) be the pluricomplex Poisson kernel of $B$ ({\sl resp.}, of $W$) associated to $(\tilde\alpha_p, \tilde\theta_p)$.

Fix a compatible defining couple $(\alpha',\theta')$ for $T_p^\C\partial D$.  By Proposition~\ref{ovvia}  there exists $c>0$ such that $\Omega_{D,p}=c \Omega^{\alpha'}_{D,p}$,  $\Omega_{B,p}=c \Omega^{\alpha'}_{B,p}$ and $\Omega_{W,p}=c \Omega^{\alpha'}_{W,p}$. 

Since $B\subset D\subset W$ for all $z,w\in B$, $z\neq w$, we have
\[
G_W(z,w)\leq G_D(z,w)\leq G_B(z,w).
\]
Since for $w\to p$ the pluricomplex Green functions of $B, D, W$ tends to $0$, we obtain for all $z\in B$,
\[
- \frac{\partial G_W(z,p)}{\partial \nu_p}\leq - \frac{\partial G_D(z,p)}{\partial \nu_p}\leq - \frac{\partial G_B(z,p)}{\partial \nu_p}.
\]
By \cite[Thm. 6.1]{BPT}, $- \frac{\partial G_B(\cdot,p)}{\partial \nu_p}=\Omega_{B,p}$ and $- \frac{\partial G_W(\cdot,p)}{\partial \nu_p}=\Omega_{W,p}$. Hence,  by \eqref{Eq:dineq}, for all $z\in B$,
\[
\frac{ \Omega_{B,p}(z)}{\Omega_{W,p}(z)}\leq \frac{ - \frac{\partial G_D(z,p)}{\partial \nu_p}}{\Omega_{D,p}(z)}\leq \frac{ \Omega_{W,p}(z)}{\Omega_{B,p}(z)}.
\]
Taking the limit for $z\to p$, the result follows  from Proposition~\ref{Prop:M2}.
\end{proof}

\section{Reproducing formula}

We briefly recall Demailly's construction \cite{De0, De} for the reproducing formula of pluri(sub)harmonic functions in terms of the pluricomplex Green function.

Let $D\subset \C^n$ be a bounded
strongly pseudoconvex domain with smooth boundary. Let $\psi$ be a defining function of $D$ and define
\[
\omega_{\de D}:=\frac{(dd^c\,\psi)^{n-1} \wedge
d^c\psi}{|d\psi|^n}|_{\partial D},
\]
where $d^c\psi=i(\overline\partial - \partial)\psi$. The form $\omega_{\de D}$ is a positive $(2n-1)$-real form, independent of the  function $\psi$ chosen to define it.

\begin{remark}
The form $\omega_{\partial D}$ can be also expressed in terms of the Levi form, $\hbox{Levi}(\psi)$, of $\psi$:
\begin{equation}\label{Eq:Levi-Dem}
\omega_{\partial D} =\ 4^{n-1} (n-1)! \frac{\det(\hbox{Levi}\,\psi)}{|d\psi|^{n-1}}\, d \hbox{Vol}_{\partial D}.
\end{equation}
In order to prove such a formula,  fix an orthonormal basis $(v_1,\ldots,v_{n-1})$ of $T_p^{\mathbb C}\partial D$, and let, as usual, $\nu_p$ be the outer unit normal vector of $\partial D$ at $p$. Fix the orientation of $\mathbb C^n$  given by the orthonormal basis $(v_1,Jv_1,\ldots,v_{n-1},Jv_{n-1},\nu_p,J\nu_p)$. With such a choice, the set $\mathcal B=(v_1,Jv_1,\ldots,v_{n-1},Jv_{n-1},J\nu_p)$ is a positive-oriented orthonormal basis of $T_p\partial D$.

If $p\in\partial D$, $v\in T_p\partial D$
$$d^c\psi(Jv)=i\sum_j \left[\frac{\partial\psi}{\partial \overline{z}_j}(-i\overline{v_j})-\frac{\partial\psi}{\partial z_j}(iv_j)\right]=\sum_j \left[\frac{\partial\psi}{\partial \overline{z}_j}\overline{v_j}+\frac{\partial\psi}{\partial z_j}v_j\right]=(d\psi)_p(v)\,.$$

Let us compute the form $\omega_{\partial D}$ in the basis $\mathcal B$:

$$\frac{(dd^c\psi)^{n-1}\wedge d^c\psi}{|d\psi|^n}(v_1,Jv_1,\ldots,v_{n-1},Jv_{n-1},J\nu_p) = \frac{(dd^c\psi)^{n-1}}{|d\psi|^{n-1}}(v_1,Jv_1,\ldots,v_{n-1},Jv_{n-1})\,.$$

Let   $w_j := \frac12(v_j-iJv_j)$ and let $\beta_s$ be the $(1,0)$-form such that
$$\beta_s\left(w_j \right)=\delta_{js}\,.$$
Let $\lambda_1, \ldots, \lambda_{n-1}$ be the eigenvalues and $v_1,\ldots,v_{n-1}$ be  a diagonalizing orthonormal basis of the Hermitian form  
$$\left(\frac{\partial^2\psi}{\partial z_i\partial \overline{z}_j}\right)_p(u_1;u_2), \ \ \  u_1,u_2\in T^{\mathbb C}\partial D\,.$$
Then $dd^c\psi=2i\partial\overline\partial\psi$ and so
$$dd^c\psi|_{T^{\mathbb C}_p\wedge T^{\mathbb C}_p} = \sum_{j=1}^{n-1}2i \lambda_j  \beta_j\wedge \overline \beta_j\,.$$
Hence
$$\left(dd^c\psi|_{T^{\mathbb C}_p\wedge T^{\mathbb C}_p}\right)^{n-1}\ =\ \sum_{j_1,\ldots,j_{n-1}}\lambda_{j_1}\cdots\lambda_{j_{n-1}}(2i)^{n-1} (\beta_{j_1}\wedge \overline{\beta} _{j_1}) \wedge \cdots \wedge (\beta_{j_{n-1}}\wedge \overline{\beta} _{j_{n-1}}),.$$
Each term is a wedge product of $(1,1)$-forms, hence those forms commute and we can write
$$\left(dd^c\psi|_{T^{\mathbb C}_p\wedge T^{\mathbb C}_p}\right)^{n-1}\ = (n-1)! (2i)^{n-1} \lambda_1\cdots \lambda_{n-1} \beta_1\wedge\overline\beta_1\wedge\cdots\wedge\beta_{n-1}\wedge\overline\beta_{n-1}.$$
Since
$$2i(\beta_k\wedge\overline\beta_k)(v_h,Jv_h)\ =\ 2i \beta_k\wedge\overline\beta_k\left(w_h+\overline w_h;\frac{w_h-\overline w_h}{- i}\right)\ =\ 4\delta_{h,k},$$
equation \eqref{Eq:Levi-Dem} follows.
\end{remark}

Let $\varphi$ be a negative plurisubharmonic exhaustion function in $D$ such that $\exp(\varphi)$ is continuous on $\overline{D}$ and $\varphi=0$ on $\partial D$. Let $r<0$ and let $B(r):=\{z\in D: \varphi(z)<r\}$. Let $\varphi_r(z)=\max\{\varphi(z), r\}$. Hence, $(dd^c\varphi_r)^n=\chi_{\C^n\setminus B(r)}(dd^c\varphi)^n+\mu_{\varphi,r}$, where $\chi_{\C^n\setminus B(r)}$ is the characteristic function of $\C^n\setminus B(r)$ and $\mu_{\varphi,r}$ is a positive measure supported on $\partial B(r)$. If $\int_D(dd^c\varphi)^n<+\infty$ then $\mu_{\varphi, r}$ weakly converges to a positive measure $\mu_\varphi$ supported on $\partial D$ as $r\to 0$ and whose total mass is $\int_D(dd^c\varphi)^n$. 

Demailly \cite[Th\'eor\`eme 5.1]{De} proved the following representative formula: if $f$ is a plurisubharmonic function in $D$, continuous in $\overline{D}$, then
\begin{equation}\label{Dem-repr}
f(z)=\frac{1}{(2\pi)^n}\mu_{G_D(z,\cdot)}(f)-\frac{1}{(2\pi)^n}\int_{w\in D} |G_D(z, w)| dd^c f(w)\wedge (dd^c G_D(z,w))^{n-1}.
\end{equation}

Demailly's formula holds, in fact, for hyperconvex bounded domains.

We first show that for strongly pseudoconvex domains the measure $\mu_z$ is related to the pluricomplex Green function:

\begin{lemma}\label{Lem:Green-Demailly}
Let $D\subset \C^n$ be a bounded strongly pseudoconvex domain
with smooth boundary. Let $G_D(z,w)$ be the pluricomplex Green
function of $D$ with pole in $z\in D$. Let $\nu_p$ denote the outer unit normal vector
to $\de D$ at $p\in \de D$. Then
\begin{equation}\label{espr}
\mu_{G_D(z,\cdot)}= \left(\frac{\partial G_D(z,\cdot)}{\partial \nu_p}\right)^n \omega_{\partial D}.
\end{equation}
\end{lemma}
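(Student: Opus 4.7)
Set $\varphi(w):=G_D(z,w)$. By the Guan--B\l ocki regularity recalled in Lemma~\ref{stime}, $\varphi\in C^{1,1}(\overline D\setminus\{z\})$, with $\varphi|_{\partial D}=0$, and by Hopf's lemma $a(p):=\partial\varphi/\partial\nu_p(p)$ is continuous and strictly positive on $\partial D$. Since $(dd^c\varphi)^n=(2\pi)^n\delta_z$ is supported at $z\in B(r)=\{\varphi<r\}$ for every $r<0$, the Demailly approximating measure reduces to $\mu_{\varphi,r}=(dd^c\max\{\varphi,r\})^n$ and is supported on the smooth hypersurface $S_r:=\{\varphi=r\}$ (the smoothness of $S_r$ for $|r|$ small follows from $a>0$ and $\varphi\in C^{1,1}$ near $\partial D$). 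The goal is to show $\mu_\varphi=a^n\omega_{\partial D}$.

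My plan has three steps. The first is the Bedford--Taylor identity
\[
\mu_{\varphi,r}=(dd^c\varphi)^{n-1}\wedge d^c\varphi\,\big|_{S_r},
\]
classical for smooth plurisubharmonic exhaustions (one simply computes $(dd^c\max\{\varphi,r\})^n$ distributionally, using that $\max\{\varphi,r\}$ is constant below $S_r$ and coincides with $\varphi$ above, where $(dd^c\varphi)^n=0$), and extended to our $C^{1,1}$ $\varphi$ by smoothing $\varphi$ with uniformly bounded $C^{1,1}$-norm (as in the proof of Lemma~\ref{stime}) and invoking Bedford--Taylor continuity of the Monge--Amp\`ere operator.

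The second step is the following local boundary identity: fix a smooth defining function $\psi$ of $D$ with $|d\psi|=1$ on $\partial D$, and let $u$ be any $C^2$ plurisubharmonic function near $\partial D$ with $u|_{\partial D}=0$ and $\partial u/\partial\nu_p=b$ on $\partial D$. Writing locally $u=\tilde b\psi+\psi^2 h$ with $\tilde b$ a smooth extension of $b$, and letting $\iota:\partial D\hookrightarrow\mathbb C^n$ denote the inclusion, the facts that $\iota^*d\psi=0$ (since $\psi\circ\iota=0$) and that $\iota^*d^c\psi\wedge\iota^*d^c\psi=0$ (a $1$-form wedged with itself) give
\[
\iota^*(d^cu)=\tilde b\,\iota^*(d^c\psi),\qquad \iota^*(dd^cu)=\iota^*(d\tilde b\wedge d^c\psi)+\tilde b\,\iota^*(dd^c\psi),
\]
and collapse $\iota^*\bigl((dd^cu)^{n-1}\wedge d^cu\bigr)$ to the single surviving summand
\[
b^n\,\iota^*\bigl((dd^c\psi)^{n-1}\wedge d^c\psi\bigr)=b^n\,\omega_{\partial D},
\]
all cross-terms involving $d\tilde b$ or $\psi$ being killed by the two cancellations above.

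The last step combines the two by letting $r\to 0^-$. The $C^{1,1}$ Taylor expansion $\varphi(p-t\nu_p)=-a(p)\,t+O(t^2)$ furnished by Lemma~\ref{stime} and the implicit function theorem parametrize $S_r$ over $\partial D$ by $\Psi_r(p):=p-t(p,r)\nu_p$, with $t(p,r)=|r|/a(p)+O(r^2)$ and $\Psi_r\to\mathrm{id}$ in $C^1(\partial D)$. Applying Step~2 to smooth plurisubharmonic approximations $\varphi_\varepsilon$ of $\varphi$ with $\partial\varphi_\varepsilon/\partial\nu_p\to a$ uniformly on $\partial D$ and uniformly bounded $C^{1,1}$-norms, Bedford--Taylor continuity of $(dd^c\cdot)^{n-1}\wedge d^c(\cdot)$ yields
\[
\Psi_r^*\bigl((dd^c\varphi)^{n-1}\wedge d^c\varphi\bigr)\longrightarrow a^n\,\omega_{\partial D}
\]
weakly along a diagonal limit in $(\varepsilon,r)$, and hence $\mu_\varphi=a^n\omega_{\partial D}$. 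The main obstacle is exactly this diagonal limit: because $\varphi\in C^{1,1}$ the coefficient $a$ is only Lipschitz, so the factorization $u=\tilde b\psi+\psi^2 h$ of Step~2 cannot be used directly for $\varphi$ with $\tilde b$ smooth; the two limits $\varepsilon\to 0$ and $r\to 0^-$ must therefore be coordinated while retaining the uniform $C^{1,1}$-control supplied by Lemma~\ref{stime}.
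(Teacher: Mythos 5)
Your plan and the paper's proof share the basic geometric idea---identify the limiting Monge--Amp\`ere boundary measure with $(\partial G_D/\partial\nu_p)^n\,\omega_{\partial D}$---but the routes are genuinely different. The paper picks a smooth strictly plurisubharmonic defining function $\psi$ for $D$, computes the boundary ratio
\[
\lim_{w\to p}\frac{G_D(z,w)}{\psi(w)}\;=\;\frac{\partial G_D(z,\cdot)/\partial\nu_p}{\partial\psi/\partial\nu_p}(p)
\]
(which only needs $G_D(z,\cdot)$ to be differentiable up to the boundary with nonvanishing normal derivative, i.e.\ $C^1$ plus Hopf, not $C^{1,1}$), and then invokes Demailly's comparison theorem (\cite[Th\'eor\`eme~3.8]{De}): if two plurisubharmonic exhaustion functions have a boundary ratio limit $\ell(q)$ at each $q\in\partial D$, their boundary Monge--Amp\`ere measures satisfy $\mu_u=\ell^n\mu_v$. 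The conclusion follows in two lines from $\mu_\psi=(dd^c\psi)^{n-1}\wedge d^c\psi|_{\partial D}$ and $|d\psi_p|=\partial\psi/\partial\nu_p(p)$. Your route instead re-derives from scratch the smooth case of that comparison: Step~2 is computationally correct and is essentially the heart of Demailly's theorem when everything is $C^2$.

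The gap you flag at the end, though, is real and not a cosmetic technicality. Step~2 requires $u\in C^2$ \emph{and} $u|_{\partial D}=0$; plurisubharmonic smoothings $\varphi_\varepsilon$ of $G_D(z,\cdot)$ will not vanish on $\partial D$, so Step~2 cannot be applied to them as stated. You would instead have to re-run the computation on each level set $S_r=\{\varphi=r\}$, whose geometry you control only in $C^1$ (the normal is merely Lipschitz), and simultaneously push the surface measure forward along $\Psi_r$ to $\partial D$ while sending $\varepsilon\to 0$. Bedford--Taylor continuity gives weak convergence of the currents $(dd^c\varphi_\varepsilon)^{n-1}\wedge d^c\varphi_\varepsilon$, but converting this into convergence of the push-forward measures under $\Psi_r$, uniformly enough to interchange the two limits, is precisely the quantitative comparison that Demailly's Th\'eor\`eme~3.8 packages. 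Without it your argument is incomplete: the ``diagonal limit'' is asserted, not constructed. The paper's route is shorter and cleaner here precisely because reducing to the boundary ratio and quoting the comparison theorem bypasses the $C^{1,1}$ machinery entirely.
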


\begin{proof}
Let $\varphi\in\mathcal C^2(\overline D)$ be a strictly
plurisubharmonic defining function for $D$, {\sl i.e.}
$\varphi$ is strictly plurisubharmonic and negative on $D$, and
vanishes on $\partial D$. 

Fix $z\in D$. Let $p\in \partial D$. Since both $\varphi$ and $G_D(z,\cdot)$ vanish on $\partial D$, then
$d \varphi|_{T_{p}\partial D}=0$, $d
G_D(z,\cdot)|_{T_{p}\partial D}=0$. 
Therefore, there exist real numbers $a, b$ such that  $d
G_D(z,\cdot)|_{w=p}(v)=a\Re \langle  \nu_p, v\rangle$  and $d\varphi_p(v)=b\Re \langle  \nu_p, v\rangle$ for all $v\in \C^n$ (where, as usual, $\langle \cdot, \cdot \rangle$ denotes the standard Hermitian product in $\C^n$).
Since
\[
dG_D(z,\cdot)|_{w=p}(\nu_p)= \frac{\de G_D(z,\cdot )}{\de \nu_p}, \quad \v_p(\nu_p) = \frac{\de \v(p)}{\de \nu_p},
\]
and by Hopf's Lemma $\frac{\de G_D(z,\cdot )}{\de \nu_p}\neq 0$ and $\frac{\de \v(p)}{\de \nu_p}\neq 0$, it follows that for every $\zeta\in D$,
$$
\frac{G_D(z,w)}{\varphi(w)}=\frac{dG_D(z,\cdot)|_{w=p} (\frac{\zeta-p}{|\zeta-p|})+o(1)}{d\varphi_p( \frac{\zeta-p}{|\zeta-p|})+o(1)}=\frac{\frac{\de G_D(z,\cdot )}{\de \nu_p}\Re\langle \nu_p ,\frac{\zeta-p}{|\zeta-p|}\rangle +o(1)}{\frac{\de \v(p)}{\de \nu_p}\Re \langle\nu_p , \frac{\zeta-p}{|\zeta-p|}\rangle+o(1)}.
$$
Hence, 
\begin{equation}\label{limGv}
\lim_{w\to p}\frac{G_D(z,w)}{\varphi(w)}\ =\
\frac{\frac{\partial G_D(z,\cdot)}{\partial
\nu_p}(p)}{\frac{\partial\varphi}{\partial \nu_p}(p)}.
\end{equation}

Let us now consider the measure on $\partial D$
$$\mu_\varphi\ =\ (dd^c\,\varphi)^{n-1}\wedge d^c\varphi|_{\partial D}.$$

By \cite[Th\'eor\`eme 3.8]{De} and \eqref{limGv}, it follows immediately that
$$\mu_{G_D(z,\cdot)}=\left.\frac{\left(\frac{\partial G_D(z,\cdot)}{\partial \nu_p}(p)\right)^n}
{\left(\frac{\partial\varphi}{\partial \nu_p}(p)\right)^n}\right|_{\partial D}\ \mu_\varphi.$$

Since   $|d\varphi_p|=\frac{\partial\varphi}{\partial
\nu_p}(p)$, we are done.
\end{proof}

In case the pluricomplex Green function is symmetric, by Proposition~\ref{Prop:uguali} and Lemma~\ref{Lem:Green-Demailly} we have

\begin{theorem}\label{Trepfor} Let $D\subset \C^n$ be a bounded strongly pseudoconvex domain with smooth boundary.
Assume that the pluricomplex Green function of $D$ is symmetric. Then for every
plurisubharmonic function  $f$ in $D$ continuous up to the
boundary, 
\begin{equation*}
\begin{split}
f(z) &= \frac{1}{(2\pi)^n}\int_{\partial D} f(\xi) |\Omega_{D,\xi}(z)|^n\omega_{\partial D}(\xi)\\&-\frac{1}{(2\pi)^n}\int_{w\in D} |G_D(z, w)| dd^c f(w)\wedge (dd^c G_D(z,w))^{n-1}.
\end{split}
\end{equation*}
In particular, if $f$ is pluriharmonic in $D$ and continuous on $\overline{D}$, 
\begin{equation*}
f(z) = \frac{1}{(2\pi)^n}\int_{\partial D} f(\xi) |\Omega_{D,\xi}(z)|^n\omega_{\partial D}(\xi).
\end{equation*}
\end{theorem}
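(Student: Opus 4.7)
My plan is to deduce Theorem~\ref{Trepfor} as a direct corollary of Demailly's representation formula \eqref{Dem-repr}, by rewriting its boundary integrand in terms of the pluricomplex Poisson kernel via the results already assembled in the paper. Concretely, the strategy is to chain together Demailly's formula, Lemma~\ref{Lem:Green-Demailly}, and Proposition~\ref{Prop:uguali}.

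First, since $D$ is a smooth bounded strongly pseudoconvex domain, it is hyperconvex, so formula \eqref{Dem-repr} applies to any plurisubharmonic $f$ which is continuous on $\overline{D}$: writing $\mu_z := \mu_{G_D(z,\cdot)}$ for the Demailly measure on $\partial D$ with pole at $z \in D$, we have
\[
f(z)=\frac{1}{(2\pi)^n}\int_{\partial D} f\, d\mu_z -\frac{1}{(2\pi)^n}\int_{w\in D} |G_D(z, w)|\, dd^c f(w)\wedge (dd^c G_D(z,w))^{n-1}.
\]
Thus it suffices to identify $d\mu_z$ with $|\Omega_{D,\xi}(z)|^n\,\omega_{\partial D}(\xi)$.

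Next, by Lemma~\ref{Lem:Green-Demailly},
\[
d\mu_z(\xi)=\left(\frac{\partial G_D(z,\cdot)}{\partial \nu_\xi}(\xi)\right)^n \omega_{\partial D}(\xi),
\]
where $\nu_\xi$ denotes the outer unit normal to $\partial D$ at $\xi$. Because $G_D$ is assumed symmetric, the hypotheses of Proposition~\ref{Prop:uguali} are met at every boundary point $\xi\in\partial D$, and therefore
\[
-\frac{\partial G_D(z,\xi)}{\partial \nu_\xi}=\Omega_{D,\xi}(z)\qquad \text{for every } z\in D.
\]
Since $\Omega_{D,\xi}(z)<0$ on $D$ by Proposition~\ref{prop-Omega}, we obtain $\bigl(\frac{\partial G_D(z,\cdot)}{\partial \nu_\xi}\bigr)^n=(-\Omega_{D,\xi}(z))^n=|\Omega_{D,\xi}(z)|^n$, and substituting into Demailly's formula yields the first displayed equation of the theorem. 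The pluriharmonic case follows at once, since then $dd^c f\equiv 0$ so the second integral vanishes.

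There is essentially no single ``hard step'' left at this stage, but the one point that requires care is the measurability and integrability of $\xi\mapsto f(\xi)|\Omega_{D,\xi}(z)|^n$ against $\omega_{\partial D}$. This is guaranteed because $f$ is continuous on $\overline{D}$, and the map $\xi\mapsto \Omega_{D,\xi}(z)$ is upper semicontinuous in $\xi$ by Proposition~\ref{semicontinuityinp} and locally uniformly bounded in $\xi$ on compact subsets of $D$ by Proposition~\ref{boundedmeas} (applied with the continuous choice of defining couple used to define $\Omega_{D,\xi}$ above); hence the boundary integral is a well-defined finite integral, and the two representation formulas of Theorem~\ref{Trepfor} follow.
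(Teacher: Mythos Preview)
Your proof is correct and follows exactly the paper's approach: the paper simply states that Theorem~\ref{Trepfor} follows ``by Proposition~\ref{Prop:uguali} and Lemma~\ref{Lem:Green-Demailly}'' applied to Demailly's formula \eqref{Dem-repr}, which is precisely the chain of reductions you carry out. Your additional remarks on measurability via Propositions~\ref{semicontinuityinp} and~\ref{boundedmeas} are a helpful (if not strictly necessary) elaboration beyond what the paper writes.
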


In the same formula for strongly convex domains (see, \cite[Thm 8.2]{BPT}) there is a missing factor $1/(2\pi)^n$ in front of the first integral.  

We do not know if the previous formula holds in case $G_D$ is not symmetric.

\end{document}